\theoremstyle{thmstyleone}%
\newtheorem{theorem}{Theorem}[section]
\newtheorem{lemma}[theorem]{Lemma}
\newtheorem{assumption}{Assumption}
\numberwithin{equation}{section}
\newtheorem{remark}{Remark}%
\newtheorem{definition}{Definition}%
\newcommand{\R}{\mathbb{R}}
\newcommand{\Prob}{\mathbb{P}}
\newcommand{\con}{\mathrm{con}}
\newcommand{\ewmin}{\lambda_{\min}}
\newcommand{\cA}{\mathcal{A}}
\newcommand{\cC}{\mathcal{C}}
\newcommand{\cF}{\mathcal{F}}
\newcommand{\cG}{\mathcal{G}}
\newcommand{\cN}{\mathcal{N}}
\newcommand{\cL}{\mathcal{L}}
\newcommand{\cS}{{\mathcal{S}}}
\newcommand{\cO}{{\mathcal{O}}}
\newcommand{\hX}{\mathbf{X}}
\newcommand{\hx}{\mathbf{x}}
\newcommand{\hz}{\mathbf{z}}
\newcommand{\hu}{\mathbf{u}}
\newcommand{\hp}{\mathbf{p}}
\newcommand{\hq}{\mathbf{q}}
\newcommand{\prox}{\mathrm{prox}}
\newcommand{\dist}{\mathrm{dist}}
\newcommand{\crit}{\mathrm{crit}}
\newcommand{\cond}{\mathrm{cond}}
\newcommand{\Exp}{\mathbb{E}}
\newcommand{\Sto}{\widetilde{\nabla}}
\newcommand{\iprod}[2]{\langle #1, #2 \rangle}
\DeclareMathOperator*{\argmin}{arg\,min}
\newcommand{\be}{\begin{equation}}
\newcommand{\ee}{\end{equation}}
\newcommand{\bee}{\begin{equation*}}
\newcommand{\eee}{\end{equation*}}
\begin{document}

\title[Article Title]{Dynamical convergence analysis for nonconvex linearized proximal ADMM algorithms}


\author[1,2]{\fnm{Jiahong } \sur{Guo}}\email{gjh0722@mail.dlut.edu.cn}

\author*[2]{\fnm{Xiao} \sur{Wang}}\email{wangx07@pcl.ac.cn}

\author[1]{\fnm{Xiantao} \sur{Xiao}}\email{xtxiao@dlut.edu.cn}

\affil[1]{\orgdiv{School of Mathematical Sciences}, \orgname{Dalian University of Technology}, \orgaddress{\city{Dalian}, \postcode{116024},  \country{China}}}

\affil*[2]{
\orgname{Peng Cheng Laboratory}, \orgaddress{\city{Shenzhen}, \postcode{518066}, \country{China}}}



\abstract{The convergence analysis of optimization algorithms using continuous-time dynamical systems has received much attention in recent years. In this paper, we investigate applications of these systems to analyze the convergence of linearized proximal ADMM algorithms for nonconvex composite optimization, whose objective function  is the sum of a continuously differentiable function and a composition of a possibly nonconvex function with a linear operator. We first derive a first-order differential inclusion for the linearized proximal ADMM algorithm, LP-ADMM. Both the global convergence and the convergence rates of the generated trajectory are established with the use of Kurdyka-\L{}ojasiewicz (KL) property. Then, a stochastic variant, LP-SADMM, is  delved into an investigation for finite-sum nonconvex composite problems. Under mild conditions, we obtain the stochastic differential equation corresponding to LP-SADMM, and demonstrate the almost sure global convergence of the generated trajectory by leveraging the KL property. Based on the almost sure convergence of trajectory, we construct a stochastic process that converges almost surely to an approximate critical point of objective function, and derive the expected convergence rates associated with this stochastic process.  Moreover, we propose an accelerated LP-SADMM that incorporates Nesterov's acceleration technique. The continuous-time dynamical system of this algorithm is modeled as a second-order stochastic differential equation. Within the context of KL property, we explore the related almost sure convergence and expected convergence rates.
}

\keywords{Nonconvex composite minimization; Linearized proximal ADMM, Dynamical system, Kurdyka-\L{}ojasiewicz inequality,  Global convergence,  Convergence rates, Stochastic approximation}



\maketitle

\section{Introduction}\label{sec:intro}

The composite optimization problem 
\be\label{eq:p}
\min_{x\in \R^n} H(x):=f(x)+h(Ax), 
\ee
arises in various applications, including signal processing,  image processing, machine learning, and statistics. 
Here, $f:\R^n\to\R$ is a  continuously differentiable and possibly nonconvex function, $A:\R^n\to\R^m$ is a linear operator, and $h:\R^m\to\R$ is a simple and  possibly  nonconvex function that is commonly referred to as the \textit{regularizer}. The regularizer is used to  guarantee certain desirable properties of the solution.  
Popular regularizers in the literature include the convex $\ell_1$, $\ell_2$ and total variation, as well as weakly convex  regularizers such as the  minimax concave penalty (MCP) and smoothly clipped
absolute deviation (SCAD), and the nonconvex   $\ell_p$ ($0\le p<1$) regularizer. 

Problem \eqref{eq:p} in the convex setting has been extensively studied, and various  primal-dual algorithms have been proposed, such as primal-dual hybrid gradient method,  alternating direction method of multipliers (ADMM) and their  variants. Related references  include \cite{2010Esser, 2011Boyd, Chen2014,  Teboulle2014,  RDN2020,2021Liuxuyin}. However, it is challenging  to directly extend these algorithms   to    problem \eqref{eq:p} in the  fully nonconvex setting, due to  the  nonconvexity of both $f$ and $h$. To address this challenge, recent study has focused on the Kurdyka-\L{}ojasiewicz (KL) property, which provides a tool for analyzing the global  convergence of optimization algorithms in the nonconvex setting. Relevant work includes but not limited to \cite{AB2009, ABRS2010, ABS2013,BST2014,TSP2018,Csaba2021}. 
Li and Pong \cite{Li2015} applied ADMM to solve the nonconvex problem \eqref{eq:p}. They showed that under the assumptions that $f$ and $h$ are semialgebraic, and $A$ is surjective,  the iterates generated by ADMM converge to a critical point of the objective function $H$.  
In \cite{GM2018},  a class of majorization-minimization methods for solving \eqref{eq:p} with  $A$ being a nonlinear operator were  investigated. The global convergence  of the iteration sequences generated by these methods were established under the KL inequality.  Bolte et al. \cite{BST2018} proved that the bounded iteration sequence generated by some  Lagrangian-based methods, including proximal multipliers method and proximal ADMM, is globally convergent to a critical point in the semialgebraic setting. Under the assumption  that the associated augmented Lagrangian has the KL property, Bo{\c{t}} and Nguyen  \cite{BN2020} proved that the iterates of proximal ADMM converge to a Karush-Kuhn-Tucker point, and established  convergence rates for the sequences of  the augmented Lagrangian values  and iterates with \L{}ojasiewicz exponent.

Recently, there has been a lot of  attention paid to 
the behavior of optimization algorithms  from the viewpoint of  dynamical systems. For instance, the classic gradient descent method for minimizing a differential function can be regarded as the Euler discretization of a first-order dynamics.  Su et al.   \cite{SBC2016} showed that the exact limit of Nesterov's  accelerated gradient method \cite{Nesterov1983AMF} is a second-order differential equation which provides a different perspective to understand and analyze the accelerated methods.  Attouch et al.  \cite{Attouch2018FastCO} considered a perturbed second-order dynamical
system with asymptotic vanishing viscosity. It can be interpreted as a continuous version of some fast convergent methods (e.g., FISTA  \cite{2009Beck}) for minimizing a convex smooth function. The authors derived that the objective function with respect to the generated trajectory exhibits a sublinear convergence.  
In  \cite{Franca2018}, it was shown that the continuous limits of ADMM and its accelerated variant  for solving \eqref{eq:p} in the convex and differentiable setting are a first-order and second-order differential equations, respectively. And  the convergence rates of these dynamical systems were obtained. Further, an extension from results in \cite{Franca2018}   to the nonsmooth constrained problems in the convex and strongly convex settings  was proposed in \cite{Guilherme2023}.  He et al. \cite{HHF2021} presented a  second-order inertial primal-dual  dynamical system for a separable convex optimization problem with linear equality constraints and analyzed the asymptotic properties of this system. A more general second-order inertial system with damping, which can be viewed as the inertial continuous counterpart of ADMM-type methods, was studied in \cite{ACFR2022}. 
   
Motivated by existing work on dynamic analysis of numerical optimization algorithms, we explore the  behavior of a linearized proximal ADMM (LP-ADMM)  for the fully nonconvex composite problem \eqref{eq:p} by studying  the  trajectory generated  by the corresponding dynamical  system. Under the  KL inequality, we demonstrate the global convergence of the generated trajectory. We further study a stochastic variant of LP-ADMM called LP-SADMM for solving the following finite-sum problem:
  \be\label{eq:finite-p}
\min_{x\in\R^n}H(x):=f(x)+h(Ax) \quad\mbox{with } f(x):=\frac{1}{N}\sum_{i=1}^N f_i(x),
\ee
where $h$ is weakly convex and nonsmooth, and $N$ can be large. 
Moreover, an accelerated LP-SADMM, which combines the Nesterov's  accelerated gradient with LP-SADMM, is proposed and studied for \eqref{eq:finite-p}. 
 
\textbf{Contributions.}
The main contributions of this paper are summarized as follows.
\begin{itemize}
  \item[$\bullet$] We study  a linearized proximal ADMM (LP-ADMM) method for the fully nonconvex and nonsmooth composite optimization problem \eqref{eq:p}. By using  Taylor's theorem and under certain  assumptions, we derive a first-order continuous dynamical system, which can be seen as a continuous limit of LP-ADMM:
  \[
 0\in\lambda\dot{\hx}(t)+\partial H(\hx(t)),\quad \text{ with } \hx(0)=x^0,
\]
where $\lambda>\|A^TA\|_2$.
We then analyze the behavior of LP-ADMM by taking advantage of this dynamical system. Under an assumption that the objective function $H$ admits  the chain rule,  we show a key descent property of the composition of $H$ and a generated trajectory $\hx$ with respect to time. Based on this descent property and  a condition that the trajectory is bounded, we establish a subsequence convergence of the trajectory generated by the continuous dynamical system. Furthermore, when the objective function is a KL function, we prove that the trajectory converges to the critical point of $H$.  Moreover, in the context of KL property with \L{}ojasiewicz exponent,  we provide the convergence rates of trajectory and objective function values.
  	
 \item[$\bullet$] 
We introduce a stochastic variant of LP-ADMM, called stochastic linearized proximal ADMM (LP-SADMM), to address the challenges associated with computing the full gradient of $f$ when solving the finite-sum problem \eqref{eq:finite-p}. Leveraging the smoothness properties of the Moreau envelope for weakly convex functions and under certain assumptions, we derive a first-order stochastic differential equation that serves as a continuous weak approximation of LP-SADMM:
\[0=\lambda\dot{\hx}(t)+\nabla H_{\mu}(\hx(t))+\rho^{-1/2}\dot{W}(t), \quad \text{with } \hx(0)=x^0,\]
where $\rho>0$ denotes a penalty parameter, $W$ represents a Brownian motion (or Wiener process), and $H_\mu(x):= f(x)+h_\mu(Ax)$ with $\mu>0$ and $h_\mu$ being the $\mu$-Moreau envelope of $h$. By employing a descent property with respect to the expectation of $H_{\mu}$, we establish the convergence of the generated trajectory $\hx$ in the almost sure sense. Assuming that $H_\mu$ is a KL function with a \L{}ojasiewicz exponent, we demonstrate the almost sure convergence of the trajectory $\hx$  to a critical point of $H_\mu$. Utilizing the relationship between the critical points of $H$ and $H_{\mu}$, we prove that $\bar{\hx}$, defined by
\[
\bar{\hx}(t):=\hx(t)-A^T(AA^T)^{-1}(A\hx(t)-\prox_{\mu h}(A\hx(t))), \quad t\ge0,
\]
converges to an approximate critical point of $H$ for sufficiently small constant $\mu$. Additionally, we provide the expected convergence rates of $\bar{\hx}$.
 
\item[$\bullet$] Motivated by the Nesterov's  accelerated gradient method, we propose an  accelerated variant of LP-SADMM for problem \eqref{eq:finite-p}. 
We first derive the continuous weak approximation of this proposed method. The approximation takes the form of a second-order stochastic   differential equation:
\[
0=\lambda\ddot{\hx}(t)+\lambda(\gamma+\frac{\alpha}{t})\dot{\hx}(t)+\nabla H_{\mu}(\hx(t)) +\rho^{-1/4}\dot{W}(t), \quad \text{with } \hx(0)=x^0,
\]
where $\alpha,\gamma>0$. To analyze the convergence of the trajectory of the second-order stochastic  differential equation, we establish a descent property with respect to the expectation of a Lyapunov function under the condition that the generated trajectory $[\hx;\dot{\hx}]$ is bounded. 
 We further prove that $\dot{\hx}$ converges to zero almost surely and any convergent subsequence of $\hx$ converges almost surely to a critical point of the corresponding Lyapunov function. Moreover, if the Lyapunov function is a KL function with \L{}ojasiewicz exponent, we demonstrate the global almost sure convergence of the generated trajectory. Using the relationship between the critical points of $H$ and $H_\mu$, we establish the expected convergence rates of $\bar{\hx}$ to an approximate critical point of $H$ for sufficiently small $\mu$.
\end{itemize}

\textbf{Organization.}
The rest of this paper is organized as follows.   Some notations and preliminaries are introduced in Section \ref{sec:notation}. In Section \ref{sec:Linearized proximal ADMM}, we present a linearized proximal ADMM algorithm for problem \eqref{eq:p}, and derive its continuous-time first-order  dynamical system with convergence analysis. In Section \ref{sec:sto-ADMM}, we propose  a stochastic variant algorithm, LP-SADMM, for problem \eqref{eq:finite-p}. By analyzing its corresponding first-order stochastic differential equation, we establish  convergence properties of the proposed algorithm. In  Section  \ref{sec:acce-ADMM}, we propose an accelerated variant of LP-SADMM and present its convergence analysis from the perspective of  a second-order stochastic differential equation. Finally, we give some conclusions and discussions.

\section{Notations and preliminaries} \label{sec:notation}
Take $\R^n$ and $\R^m$ as Euclidean spaces equipped with  the standard inner product $\iprod{\cdot}{\cdot}$ and  associated norm $\|\cdot\|=\sqrt{\iprod{\cdot}{\cdot}}$.
For a linear operator $A:\R^n\rightarrow\R^m$, its norm is denoted by
\[
\|A\|:=\max\{\|Ax\|:x\in\R^n\ \mbox{with}\ \|x\|\leq 1\}.
\]
Let $F:\R^n \rightrightarrows\R^m$ be a set-valued mapping. 
$F$ is said to be \emph{outer semicontinuous} at $\bar{x}$ if, for any $v\in\R^m$ such that there exist $x^k\rightarrow\bar{x}$ with $v^k\in F(x^k)$ and $v^k\rightarrow v$, it holds that $v\in F(\bar{x})$. Given a closed set $\cC \subseteq \mathbb R^n$, we denote a \emph{convex hull} of $\cC$  by $\con\, \cC$, and  the \emph{distance} between a point $x\in\mathbb R^n$ and $\cC$ by  $\dist(x,\cC):=\min_{y}\{\|x-y\|: y\in\cC\}$. 

\subsection{Clarke subgradient}\label{subsec:subdifferential}
Let $f:\R^n\to\R$ be locally Lipschitz continuous on an open set $\cO\subset\R^n$, and let $\cC$ be a subset of $\cO$ such that $f$ is differentiable at each point of $\cC$. Then by \cite[Theorem 8.49]{RW1998} and \cite[Theorem 9.61]{RW1998}, the Clarke subgradient set  of $f$ at $\bar{x}\in\cO$ can be expressed as 
\[
\partial f(\bar{x}):=\con\{v:\exists\, x\to\bar{x} \text{ with } x\in\cC, \nabla f(x)\to v\}.
\] 
Moreover,  $\partial f(\bar{x})$ is nonempty, convex and compact for any $\bar{x}\in\cO$. Additionally, $\partial f$ is outer semicontinuous and locally bounded on  $\cO$, and see  \cite{RW1998} for these conclusions. We denote the set of \emph{critical points} of  $f$ by 
\[
\crit f:= \{x\in\R^n: \dist(0,\partial f(x))=0\}.
\] 
Moreover, if $f$ is a differentiable function, then $\crit f$ becomes the set  of  all points satisfying the condition $\nabla f(x)=0$. 
Given two locally Lipschitz continuous functions $f_1$, $f_2:\mathbb R^n \to \mathbb R$, if one of them is differentiable at a point $x\in\R^n$, then it holds that
\be\label{eq:calculus}
\partial(f_1+f_2)(x)=\partial  f_1(x)+\partial  f_2(x).
\ee 
By \cite[Theorem 2.3.10]{Clarke1985}, when a locally Lipschitz continuous function $f:\R^m\to\R$ is convex or the linear operator $A:\R^n\to\R^m$ is surjective, the composition of $f$ and $A$ admits the following chain rule
\be\label{eq:chain-composition}
\partial (f\circ A)(x)=A^T\partial f(Ax), \   \forall x\in\R^n.
\ee

\subsection{Chain rule}
Let	 $\hx:[0,+\infty)\to\R^n$ be an absolutely continuous function and $F:\R^n \rightrightarrows\R^n$ be a set-valued mapping.  If  the following first-order differential inclusion holds:
\be\label{eq:first-inclusion}
0\in \dot{\hx}(t)+F(\hx(t)),
\ee
for almost every (for short, a.e.) $ t \ge 0$,
we call $\hx$  a \emph{trajectory}  of \eqref{eq:first-inclusion}. 
\begin{definition}\label{def:chain-rule1} 
Let	 $\hx:[0,+\infty)\to\R^n$ be an absolutely continuous function and $f:\R^n\to\R$ be locally Lipschitz continuous. We say that a chain rule holds for $f$ if for almost every $t\geq 0$, 
	\be\label{eq:chain-rule1}
    \frac{d}{dt} f(\hx(t))=\langle \partial f(\hx(t)), \dot{\hx}(t)\rangle=\{\langle v, \dot{\hx}(t)\rangle: \forall v\in \partial f(\hx(t))\}.
	\ee
\end{definition}
The chain rule \eqref{eq:chain-rule1} can be satisfied by the two  classes of functions, as  presented in \cite[Section 5]{DDKL2020}: (i) any locally Lipschitz continuous function that is subdifferential regular; (ii) any locally Lipschitz continuous function that is Whitney stratifiable. The latter one includes a wide variety of functions, such as  semialgebraic functions and functions definable in an $o$-minimal structure.

\subsection{Brownian motion  and  stochastic differential equation} \label{sec:brownian}

We denote by $\cN(0,s)$ the Gaussian distribution with a mean value of zero and variance of $s$.
Let $\{W(t), t\geq 0\}$ be a standard  \emph{Brownian motion} (or Wiener process) defined on a probability space $(\Omega,\cF,\Prob)$, where $\Omega$ is a sample space equipped with a $\sigma$-algebra  $\cF$ and a probability measure $\Prob$. The Brownian motion is a stochastic process and satisfies $W(t)-W(s)\sim \cN(0,t-s)$ for any $ t>s\geq 0$. Although 
$W(t)$ is nondifferentiable at any point, the formal derivative $\dot{W}(t)=dW/dt$ is elaborated in \cite{Bezandry2011, Evans2012, Ikeda1981} and is widely used in stochastic differential equation.  

Let $G:\R\to\R^n$ be a real-valued measurable stochastic process. It is said that $G\in \mathbb L^p(a, b)$ for $p\geq 1$ and $b>a\ge 0$ if $G$ satisfies 
$\int_{a}^{b}{\|G(t)\|^p}dt<+\infty$,  and $G\in\mathbb L_e^p(a, b)$ if  
$\Exp\left[\int_{a}^{b}{\|G(t)\|^p}dt\right]<+\infty$. 
For a real-valued measurable stochastic process $G\in\mathbb L_e^2(0, T)$, we call  
\[\int_{0}^{T}{G(t)}dW(t)=\lim_{n\to\infty}\int_{0}^{T} G_{n}(t)dW(t)\]
the \emph{It{\^o}'s integral} \cite[Definition 3.29]{Bezandry2011} of $G$ with respect to $ W(t)$ on the interval $[0,T]$ for $T> 0$, where $\{G_n\}$ is a sequence of simple stochastic process such that 
\[\lim_{n\to\infty}\int_{0}^{T}|G(t)-G_{n}(t)|^2dt=0.\]  
The It{\^o}'s integral has the following property
\[
\Exp\left[\int_{0}^{T}{G(t)}dW(t)\right]=0.
\]

We call the stochastic process $\hx:[0,+\infty)\to\R^n$ a \emph{trajectory} of  the first-order stochastic differential equation 
\be\label{def:first-SDE}
0= \dot{\hx}(t)+ F(\hx(t))+\beta\dot{W}(t), \text{ for } 0\leq t\leq T, 
\ee
 if  $\hx$ is progressively measurable and almost surely (a.s.) continuous, $F(\hx)\in \mathbb L_e^1(0, T)$, and the following equation  holds
\[\hx(t)=\hx(0)+\int_{0}^tF(\hx(s))ds+\int_{0}^t\beta\dot{W}(s)ds \text{ a.s. for all } 0\leq t\leq T,\]
where $\beta>0$ is a constant and $T$ is a any time. 
We would like to note that the trajectory $\hx$, determined by \eqref{def:first-SDE}, depends on a specific sample $\omega\in \Omega$.  Furthermore, the continuity of $\hx$ with probability 1 implies the existence of an event $\cA\in\cF$ with $\Prob(\cA)=1$, such that for any fixed $\omega\in\cA$, $\hx(t,\omega)$ is continuous with respect to $t$.  
 In this stochastic setting, let $\phi(x)$ be a twice continuously differentiable function. Then the It{\^o}'s  formula \cite{Evans2012} holds: 
 \be\label{eq:ito-lemma}
 \begin{aligned}
\frac{d\phi(\hx(t))}{dt}=\left(-\nabla\phi^T F(\hx(t))
 +\frac{\beta^2}{2}\nabla\cdot\nabla\phi\right)-\beta \nabla\phi^T\dot{W}(t)=\langle \nabla\phi,\dot{\hx}(t)\rangle+\frac{\beta^2}{2} \nabla\cdot\nabla\phi.
 \end{aligned}
\ee
where $\nabla\cdot\nabla$ is Laplacian. 


For a second-order stochastic differential equation
\be\label{def:second-SDE}
0= \ddot{\hx}(t)+\gamma(t)\dot{\hx}(t)+ F(\hx(t))+\beta\dot{W}(t),
\ee
we set $\hX(t):=[\hX_1(t);\hX_2(t)]$ where $\hX_1(t)=\hx(t)$ and $\hX_2(t)=\dot{\hx}(t)$, then \eqref{def:second-SDE} can be arranged into a first-order stochastic differential equation
\[
0= \dot{\hX}(t)+[-\hX_2(t);\gamma(t)\hX_2(t)+F(\hX_1(t))]+\beta[0;I]\dot{W}(t).
\]
Hence, $\hX(t)$ is  defined  as a  trajectory of the second-order stochastic differential equation \eqref{def:second-SDE}. 


\subsection{Moreau envelop and weak convexity}\label{sec:moreau and weak convex}
Let $f: \R^n\to \R$ be  proper and lower semicontinuous. Given $\mu>0$, we define the \emph{proximal mapping} and $\mu$-\emph{Moreau envelop} of $f:\R^n\to \R$ as
\[\prox_{\mu f}(y):=\argmin_{x\in\R^n}\left\{f(x)+\frac{1}{2\mu}\|x-y\|^2\right\}\]
and 
\[f_{\mu}(y):=\min_{x\in\R^n}\left\{f(x)+\frac{1}{2\mu}\|x-y\|^2\right\},\]
respectively. 
It is indicated in \cite[Theorem 1.25]{RW1998}  that 
for any $y\in\R^n$,
\be\label{eq:property-moreau1}
f_{\mu}(y)\nearrow f(y) \text{ as } \mu\searrow 0.
\ee

A
function $f:\R^n\to \R$ is said to be \emph{$\varrho$-weakly convex} if the mapping $x\longmapsto f(x)+\frac{\varrho}{2}\|x\|^2$ is convex. 
By applying the definition of  Moreau envelope and \eqref{eq:property-moreau1}, we  obtain $-\infty < \inf f(y) \leq f_{\mu}(y) \leq f(y)$ for any $y\in \R^n$. Moreover, as shown in \cite[Corollary 3.4]{Hoheisel2020}, $f_{\mu}$ is convex and $\max\{\frac{1}{\mu}, \frac{\varrho}{1-\varrho\mu}\}$-smooth, provided that $\mu < 1/\varrho$.

%

\section{LP-ADMM algorithm and convergence analysis}\label{sec:Linearized proximal ADMM}
In this section, we delve into the analysis of a linearized proximal ADMM algorithm (LP-ADMM) for solving problem \eqref{eq:p}. We first demonstrate that the continuous limit of this algorithm is a first-order differential inclusion. We then establish the global  convergence of objective function values and trajectory, along with the  convergence rates under some mild conditions. 

\subsection{LP-ADMM and continuous-time system }\label{sec:continuous limit-det}
Consider an equivalent form of \eqref{eq:p}:
\[
\min_{x\in\mathbb R^n, z\in\mathbb R^m} f(x) + h(z)\, \mbox{ s.t. } z=Ax.
\]
The augmented Lagrangian associated with this problem is defined in terms of a penalty parameter $\rho>0$ and a vector of multipliers $u\in\R^m$,  given by
\bee
L_{\rho}(x,z,u):=f(x)+h(z)+\langle u, Ax-z\rangle+\frac{\rho}{2}\|Ax-z\|^2.
\eee
Augmented Lagrangian-based algorithms  have been extensively studied  in the literature, with notable works including \cite{2011Boyd, Teboulle2014, BST2018, RDN2020}. An iteration of  classical ADMM,  as described in \cite{2011Boyd}, can be expressed as  
\begin{subnumcases}{\label{eq:classical-iter}}
	x^{k+1}\in\argmin_{x\in\R^n} \left\lbrace L_{\rho}(x,z^k,u^k)=f(x)+\langle u^k, Ax-z^k\rangle+\frac{\rho}{2}\|Ax-z^k\|^2 \right\rbrace, \label{eq:classical-iter-1}\\
	\notag\\
	z^{k+1}\in\argmin_{z\in\R^m}\left\lbrace L_{\rho}(x^{k+1},z,u^k)=h(z)+\langle u^k, Ax^{k+1}-z\rangle+\frac{\rho}{2}\|Ax^{k+1}-z\|^2\right\rbrace, \label{eq:classical-iter-2}\\
	\notag\\
	u^{k+1}=u^k+Ax^{k+1}-z^{k+1}.\label{eq:classical-iter-3}
\end{subnumcases} 
It is worth noting that an inherent challenge arises in updating $x^{k+1}$ due to the presence of the coupled term $Ax$. Meanwhile, the potential nonlinearity and nonconvexity of $f$ can make it challenging to compute the proximal mapping of $f$. To address these issues, a linearization approach with respect to $x^k$ in \eqref{eq:classical-iter-1} is used to obtain a more tractable solution:
\be\label{eq:modified-iter}
x^{k+1}\in\argmin_{x\in\R^n} \left\{  \langle\nabla f(x^k),x-x^k\rangle+\langle u^k,Ax-z^k\rangle+\frac{\rho}{2}\|Ax-z^k\|^2+\frac{1}{2\eta}\|x-x^k\|_{M}^2 \right\}.
\ee
Here, $M$ is a symmetric positive definite matrix  and $\|x\|_M^2:=x^TMx$. Then \eqref{eq:modified-iter} admits a unique solution. From the optimality condition for \eqref{eq:modified-iter} it follows that 
\[
x^{k+1}=-(\rho A^TA+\frac{1}{\eta}M)^{-1}\left( \nabla f(x^k)+A^T u^k-\rho A^Tz^k-\frac{1}{\eta}Mx^k\right).
\] 
To ease the computational burden  caused by a possibly dense matrix $\rho A^TA+\frac{1}{\eta}M$, we  set $M=\eta(\tau I-\rho A^TA)$,  obtaining Algorithm \ref{alg:Modified ADMM}. 

\begin{algorithm}
\caption{LP-ADMM}\label{alg:Modified ADMM}
\begin{algorithmic}[1]
\Require Initial point $(x^0,z^0,u^0) \in \R^n\times\R^m\times\R^m$,  parameters $\eta,\rho>0$ and $\tau> \rho\|A^TA\|+1/\eta$.
\For{ $k=0,1,2,\ldots$ } 
\State Update $x^{k},z^{k},u^{k}$ as follows:
\begin{subnumcases}{\label{eq:iter}}
				x^{k+1}=x^k-\frac{1}{\tau}\left(\nabla f(x^k)+\rho A^T(Ax^k-z^k+\frac{1}{\rho}u^k)\right),\label{eq:iter1}\\
				\notag\\
				z^{k+1}\in\argmin_{z\in\R^m}\left\lbrace h(z)+\langle u^k, Ax^{k+1}-z\rangle+\frac{\rho}{2}\|Ax^{k+1}-z\|^2\right\rbrace,\label{eq:iter2}\\
				\notag\\
				u^{k+1}=u^k+Ax^{k+1}-z^{k+1}\label{eq:iter3}.
			\end{subnumcases}
\State  Set $k \Leftarrow k+1$.
\EndFor 
\end{algorithmic}
\end{algorithm}
The setting of parameter $\tau$ in Algorithm \ref{alg:Modified ADMM},  $\tau>\rho|A^TA|+1/\eta$, ensures the positive definiteness of $M$. In addition, $1/\tau$ can be interpreted as the step size for the gradient step. Algorithm \ref{alg:Modified ADMM} has been previously studied in \cite{RDN2020, lan2015}. However, in contrast to these studies, our focus in this section lies in the continuous dynamic analysis of the algorithm specifically for nonconvex objectives.

Before proceeding,  we first lay out some standing and standard assumptions that will be  used throughout the remainder of this paper.

\begin{assumption} \label{ass:det} 
For problem \eqref{eq:p}, suppose that
\begin{itemize} 
\item[(i)] Lower boundedness holds, i.e. $\inf_{x\in\R^n} f(x)>-\infty$,   $\inf_{y\in\R^m} h(y)>-\infty$.
\item[(ii)] Function $f$ is continuously differentiable,  and $h$ is locally Lipschitz continuous.
\item[(iii)] Linear operator $A$ is surjective.
\end{itemize}
\end{assumption}
\begin{remark} \label{re:surjective} Here are some comments on Assumption \ref{ass:det}.
\begin{itemize} 
\item[(i)] Assumption \ref{ass:det}(i) ensures that problem  \eqref{eq:p}  is well-defined, and plays a significant role in the weak convergence analysis, as demonstrated in Theorems \ref{th:weak-convergence}, \ref{th:sto-weak-convergence} and \ref{th:weak-conver1-acce}.

\item[(ii)] Note that  $A$ is surjective if and only if $AA^T$ is positive definite. Additionally,  
Assumption \ref{ass:det}(iii) provides a sufficient condition for the chain rule to hold:
\be\label{eq:chain-h}
\partial (h\circ A)(x)=A^T\partial h(Ax),
\ee
which is crucial in  deriving the continuous differential inclusion  of Algorithm \ref{alg:Modified ADMM}. 

\end{itemize}	
\end{remark}
We now proceed with an informal derivation to establish a first-order differential inclusion  for Algorithm \ref{alg:Modified ADMM}. This system can be regarded as the continuous limit of Algorithm \ref{alg:Modified ADMM} when the parameter $\rho$ approaches infinity. The derived system forms the foundation of our analysis in this section. Let $\{x^k\}, \{z^k\}, \{u^k\}$ be generated by Algorithm \ref{alg:Modified ADMM}. The optimality condition for  \eqref{eq:iter2} implies
\be\label{eq:optimal-2}
0\in \partial h(z^{k+1})-\rho(Ax^{k+1}-z^{k+1}+\frac{1}{\rho}u^k),
\ee
which combines \eqref{eq:iter1} to yield  
\be\label{eq:optimal-2.1}
0\in\tau(x^{k+1}-x^k)+\nabla f(x^k)+A^{T}\partial h(z^{k+1})-\rho A^TA(x^{k+1}-x^{k})+\rho A^T(z^{k+1}-z^{k}).
\ee
Following a similar approach as presented in  \cite{SBC2016},  we introduce the \emph{Ansatz}: $x^k\approx\hx(k/\rho), z^k\approx\hz(k/\rho), u^k\approx\hu(k/\rho)$ for some continuously differentiable functions $\hx,\hz,\hu$ defined on $[0,+\infty)$. Denote $s=1/\rho$ and $k=t/s$. 
Then $\hx(t)\approx x^{t/s}=x^k, \hx(t+s)\approx x^{(t+s)/s}=x^{k+1}$ as the parameter $s$ is sufficiently small.  This approximation holds true for $\hz, \hu$.
By applying Taylor's theorem we derive the following relations
\begin{align}
& u^{k+1} =\hu(t)+s\dot{\hu}(t)+O(s^2),\label{eq:differential mean-u}\\
& z^{k+1} =\hz(t)+s\dot{\hz}(t)+O(s^2),\label{eq:differential mean-z}\\
& x^{k+1}=\hx(t)+s\dot{\hx}(t)+O(s^2),\label{eq:differential mean-x}
\end{align}
which indicate from    \eqref{eq:iter3} that
\bee
s \dot{\hu}(t)=A\hx(t)-\hz(t)+s(A\dot{\hx}(t)-\dot{\hz}(t))+O(s^2).
\eee
Note that as $s\to 0$, we have 
\be\label{eq:condition1}
\begin{aligned}
A\hx(t)=\hz(t),
\end{aligned}
\ee
for any $t\in[0,+\infty)$, which further indicates from the arbitrariness of $t$ that 
\be\label{eq:condition2}
A\dot{\hx}(t)=\dot{\hz}(t). 
\ee
By inserting \eqref{eq:differential mean-z} and \eqref{eq:differential mean-x} into  \eqref{eq:optimal-2.1},  it yields  
\be\label{eq:inclusion2}
0\in\tau(s\dot{\hx}(t)+O(s^2))+\nabla f(\hx(t))+A^{T}\partial h(\hz(t)+O(s))-A^TA\dot{\hx}(t)+A^T\dot{\hz}(t).
\ee
Recall that $\tau s> \|A^TA\|+s/\eta$ from the setting of $\tau$ in Algorithm \ref{alg:Modified ADMM} and $s=1/\rho$. Notably, there exists a constant $\lambda>\|A^TA\|$ such that $\tau s=\lambda+O(s)$. Then 
\eqref{eq:inclusion2} is rearranged into 
\be\label{eq:inclusion3}
0\in\lambda\dot{\hx}(t)+O(s)+\nabla f(\hx(t))+A^{T}\partial h(\hz(t)+O(s))-A^TA\dot{\hx}(t)+A^T\dot{\hz}(t).
\ee
Let us consider the limit of above inclusion as $s\to 0$. Since $\hz(t)+O(s)\to A\hx(t)$ as $s\to 0$ from \eqref{eq:condition1},  it is worthy to note that 
for any vector $d(s)\in A^{T}\partial h(\hz(t)+O(s))$ with  $d(s)\to d$  as $s\to 0$, 
it must hold that $d\in A^{T}\partial h(A\hx(t))$  by the outer semicontinuity of $\partial h$. Hence, taking limit of \eqref{eq:inclusion3} as $s\to 0$, and using   \eqref{eq:condition1} and \eqref{eq:condition2} yield 
\be\label{eq:inclusion}
 0\in\lambda\dot{\hx}(t)+\partial H(\hx(t)) \text{ with } \hx(0)=x^0,
\ee
where $\lambda>\|A^TA\|$.

\begin{remark}
By \cite[Theorem 2.1]{Aubin1984chapter2}, if $\partial H$ is nonempty, closed convex valued and outer semicontinuous, and there exists a  constant $C>0$ such that $\dist(0,\partial H(x))\leq C$ for any $x\in\R^n$,   then  
the differential inclusion \eqref{eq:inclusion} admits at least one trajectory.  According to  the properties of Clarke subgradient in Subsection \ref{subsec:subdifferential}, $\partial H$  clearly meets the above conditions.  Additionally, if there exists a constant $L>0$  such that for any $x,y\in\R^n$ and $u\in\partial H(x), v\in\partial H(y)$, 
\[\langle u-v,x-y\rangle\leq L\|x-y\|^2, \] 
then the trajectory of  \eqref{eq:inclusion} 
 is unique, as stated in  \cite[Theorem 2.2.2]{Kunze2000}.
\end{remark}

\subsection{Weak convergence analysis }
In Subsection \ref{sec:continuous limit-det}, we have obtained a differential inclusion  of Algorithm \ref{alg:Modified ADMM}. Assuming that the chain rule is always satisfied for the objective $H(\hx)$, we will   establish a  weak convergence property of the trajectory generated by  this continuous differential inclusion, under the condition that the trajectory is bounded, which is standard in the global convergence analysis of nonconvex optimization algorithms (see \cite{BST2014,BST2018,Radu2018, Radu2020} for instance).

\begin{assumption}\label{ass:chain-rule}
The objective function $H(\hx)=f(\hx)+h(A\hx)$ admits the  chain rule, as shown in Definition \ref{def:chain-rule1}.
\end{assumption}
Assumption \ref{ass:chain-rule}  has been employed in works for  nonconvex nonsmooth optimization, such as \cite{DDKL2020} and \cite{Camille2021}.   
When $H$ is a locally Lipschitz continuous subdifferential regular function or a locally Lipschitz Whitney stratifiable function, Assumption \ref{ass:chain-rule} is satisfied.

Before presenting the convergence properties of Algorithm \ref{alg:Modified ADMM}, we first provide a  descent property in terms of the objective function value that is essential in the subsequent analysis.

\begin{lemma}\label{lem:descent}
Suppose Assumptions \ref{ass:det} and \ref{ass:chain-rule} hold. Let $\hx$ be a trajectory of the differential inclusion \eqref{eq:inclusion}. 
Then for any $t_2\ge t_1\geq 0$, we have 
\be\label{eq:descent}
H(\hx(t_2))+\lambda^{-1}\int_{t_1}^{t_2}{\dist^2(0,\partial H(\hx(t)))}dt\leq H(\hx(t_1)).
\ee
\end{lemma}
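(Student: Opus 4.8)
The plan is to differentiate the composition $t\mapsto H(\hx(t))$ along the trajectory and then integrate the resulting pointwise descent estimate. First I would record that $H$ is locally Lipschitz continuous: by Assumption \ref{ass:det}(ii), $f$ is continuously differentiable (hence locally Lipschitz) and $h$ is locally Lipschitz, while $A$ is a bounded linear operator, so $h\circ A$ and therefore $H=f+h\circ A$ are locally Lipschitz. Since $\hx$ is a trajectory of \eqref{eq:inclusion}, it is absolutely continuous, and composing a locally Lipschitz function with an absolutely continuous one keeps absolute continuity; hence $H\circ\hx$ is absolutely continuous on $[t_1,t_2]$. This is exactly what permits recovering $H(\hx(t_2))-H(\hx(t_1))$ as the integral of $\frac{d}{dt}H(\hx(t))$ through the fundamental theorem of calculus for absolutely continuous functions.

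Next I would exploit the inclusion itself. For almost every $t$, \eqref{eq:inclusion} supplies a measurable selection $\xi(t)\in\partial H(\hx(t))$ with $\lambda\dot{\hx}(t)=-\xi(t)$, that is $\dot{\hx}(t)=-\lambda^{-1}\xi(t)$. Invoking Assumption \ref{ass:chain-rule}, the chain rule \eqref{eq:chain-rule1} ensures that the set $\{\langle v,\dot{\hx}(t)\rangle:v\in\partial H(\hx(t))\}$ collapses to the single value $\frac{d}{dt}H(\hx(t))$; evaluating it at the particular selection $v=\xi(t)$ yields
\[
\frac{d}{dt}H(\hx(t))=\langle \xi(t),\dot{\hx}(t)\rangle=-\lambda^{-1}\|\xi(t)\|^2
\]
for almost every $t$.

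The pointwise descent estimate then follows from the elementary bound $\|\xi(t)\|\ge\dist(0,\partial H(\hx(t)))$, valid because $\xi(t)$ is itself a member of $\partial H(\hx(t))$. Consequently $\frac{d}{dt}H(\hx(t))\le -\lambda^{-1}\dist^2(0,\partial H(\hx(t)))$ a.e.\ on $[t_1,t_2]$. Integrating this inequality and using the absolute continuity established above gives
\[
H(\hx(t_2))-H(\hx(t_1))\le -\lambda^{-1}\int_{t_1}^{t_2}\dist^2(0,\partial H(\hx(t)))\,dt,
\]
which is precisely \eqref{eq:descent} after rearrangement.

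I expect the main obstacle to be the careful justification that the chain rule may be applied to the specific selection $\xi(t)$ arising from the inclusion, and that $H\circ\hx$ is genuinely absolutely continuous so that integrating its almost-everywhere derivative reproduces the exact increment of $H$ along the trajectory. Both points rest on Assumption \ref{ass:chain-rule} combined with the local Lipschitz continuity of $H$; once these are secured, the remaining manipulation is a routine integration, and the key inequality $\|\xi(t)\|\ge\dist(0,\partial H(\hx(t)))$ is immediate.
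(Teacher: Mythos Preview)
Your proposal is correct and follows essentially the same approach as the paper: apply the chain rule \eqref{eq:chain-rule1} together with the inclusion $-\lambda\dot{\hx}(t)\in\partial H(\hx(t))$ to obtain $\frac{d}{dt}H(\hx(t))=-\lambda\|\dot{\hx}(t)\|^2=-\lambda^{-1}\|\xi(t)\|^2$, bound this by $-\lambda^{-1}\dist^2(0,\partial H(\hx(t)))$, and integrate. Your version is slightly more explicit about the absolute continuity of $H\circ\hx$ and the role of the measurable selection $\xi(t)$, but the argument is the same.
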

\begin{proof}
Applying  the chain rule for $H(\hx(t))$ and $\lambda\dot{\hx}(t)\in-\partial H(\hx(t))$ from \eqref{eq:inclusion}, we have 
\be\label{eq:chain-rule-application}
\frac{d}{dt}H(\hx(t))=\langle \partial H(\hx(t)), \dot{\hx}(t)\rangle 
= -\lambda\|\dot{\hx}(t)\|^2, \text{ for } t\geq 0.
\ee
The relation $\lambda\dot{\hx}(t)\in-\partial H(\hx(t))$ also implies that 
\be\label{eq:dist-dot-x}
\dist(0,\partial H(\hx(t)))\leq \lambda\|\dot{\hx}(t)\|,
\ee
which, together with  \eqref{eq:chain-rule-application}, yields that for any $t\geq 0$, 
\be\label{eq:differential-dist}
\frac{d}{dt}H(\hx(t))\leq -\lambda^{-1}\dist^2(0,\partial H(\hx(t))).
\ee
Integrating both sides of the above inequality over $[t_1,t_2]$, we obtain 
\[
H(\hx(t_2))-H(\hx(t_1))\leq-\lambda^{-1}\int_{t_1}^{t_2}\dist^2(0,\partial H(\hx(t)))dt.
\]
This proof is completed.   	
\end{proof}

\begin{remark} Lemma \ref{lem:descent} indicates that the objective function $H(\hx(t))$ is nonincreasing. Additionally,   if $H(\hx(t_1))=H(\hx(t_2))$,  $\dist(0,\partial H(\hx(t)))=0$ for a.e.  $t\in[t_1,t_2]$. Therefore,   $\hx(t) \in\crit H$ for  a.e. $t\in[t_1,t_2]$.
\end{remark}

Consider   a trajectory $\hx$ of the  differential inclusion \eqref{eq:inclusion}. We say that  $x_{\infty}$ is a  cluster point of  $\hx(t)$ if  there exists an increasing sequence $t_k\to \infty$ such that $\hx(t_k)\to x_{\infty}$. 
Using the descent property derived in Lemma \ref{lem:descent}, we establish a preliminary convergence result. 

\begin{theorem}\label{th:weak-convergence}
	Let $\hx$ be a  trajectory of \eqref{eq:inclusion}, and $\cC$ be the set of cluster points of $\hx$. 
	Under Assumptions \ref{ass:det} and \ref{ass:chain-rule}, the following statements hold true:
\begin{itemize}
\item[(i)] $\int_{0}^{\infty}{\|\dot{\hx}(t)\|^2} dt<+\infty$;
\item[(ii)] $H(\hx)$  is finite and constant over $\cC$;
\item[(iii)] $\cC\subseteq\crit H$.
\end{itemize}
Moreover, if $\hx$ is bounded, then
\begin{itemize}
\item[(iv)] $\cC$ is nonempty, compact and 
\[\lim_{t\to +\infty}\dist((\hx(t)),\cC)= 0.\]
\end{itemize}
\end{theorem}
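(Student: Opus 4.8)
The plan is to extract everything from the descent mechanism already in hand: the energy identity \eqref{eq:chain-rule-application}, the subgradient bound \eqref{eq:dist-dot-x}, and the integrated inequality of Lemma \ref{lem:descent}. Parts (i) and (ii) are essentially bookkeeping on this descent. For (i) I would integrate \eqref{eq:chain-rule-application} over $[0,T]$ to get $\lambda\int_0^T\|\dot{\hx}(t)\|^2\,dt = H(\hx(0)) - H(\hx(T))$; since Assumption \ref{ass:det}(i) forces $H=f+h\circ A$ to satisfy $H\ge \inf f+\inf h>-\infty$, the right side is bounded above by $H(\hx(0))-\inf H$ uniformly in $T$, and letting $T\to\infty$ gives the finiteness. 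For (ii), the same descent shows $t\mapsto H(\hx(t))$ is nonincreasing and bounded below, so $H^* := \lim_{t\to\infty}H(\hx(t))$ exists and is finite; because $H$ is continuous (locally Lipschitz, as $f$ is $C^1$ and $h$ is locally Lipschitz under Assumption \ref{ass:det}(ii)), evaluating along any $t_k\to\infty$ with $\hx(t_k)\to x_\infty$ yields $H(x_\infty)=H^*$, so $H\equiv H^*$ on $\cC$.

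Part (iii) is where the genuine work lies, and I expect it to be the main obstacle. Fix $x_\infty\in\cC$ with $\hx(t_k)\to x_\infty$. The difficulty is that square-integrability of $\dot{\hx}$ from (i) does not by itself force $\dot{\hx}(t_k)\to 0$ along this particular sequence, so I cannot read off a vanishing subgradient directly at $x_\infty$. My plan is a sliding argument onto nearby times: since $t_k\to\infty$, the tail contribution $\int_{t_k}^{t_k+1}\|\dot{\hx}(t)\|^2\,dt\to 0$, so I may select $\tau_k\in[t_k,t_k+1]$ at which the derivative exists and $\|\dot{\hx}(\tau_k)\|^2\le \int_{t_k}^{t_k+1}\|\dot{\hx}(t)\|^2\,dt\to 0$ (the essential infimum is at most the average over a unit interval). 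A Cauchy--Schwarz estimate then gives $\|\hx(\tau_k)-\hx(t_k)\|\le \int_{t_k}^{t_k+1}\|\dot{\hx}(t)\|\,dt\le\big(\int_{t_k}^{t_k+1}\|\dot{\hx}(t)\|^2\,dt\big)^{1/2}\to 0$, so $\hx(\tau_k)\to x_\infty$ as well. Setting $v_k := -\lambda\dot{\hx}(\tau_k)\in\partial H(\hx(\tau_k))$ from \eqref{eq:inclusion}, one has $\|v_k\|=\lambda\|\dot{\hx}(\tau_k)\|\to 0$, and the outer semicontinuity of $\partial H$ then delivers $0\in\partial H(x_\infty)$, i.e. $x_\infty\in\crit H$. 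The delicate point is precisely manufacturing times $\tau_k$ at which both $\hx$ still converges to the chosen cluster point and $\dot{\hx}$ vanishes, since this is the only place where the global integral bound must be converted into pointwise information attached to the specific sequence producing $x_\infty$.

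For part (iv), boundedness of $\hx$ makes $\cC$ nonempty by Bolzano--Weierstrass, and compact because it is bounded and closed, being $\cC=\bigcap_{T\ge 0}\overline{\{\hx(t):t\ge T\}}$, an intersection of closed sets. The limit $\lim_{t\to\infty}\dist(\hx(t),\cC)=0$ I would obtain by contradiction: if some sequence $\hx(t_k)$ remained at distance $\ge\varepsilon$ from $\cC$, then boundedness would yield a convergent subsequence whose limit is a cluster point lying in $\cC$ yet at distance $\ge\varepsilon$ from $\cC$, an absurdity. This part is routine once (iii) is established.
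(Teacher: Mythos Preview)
Your argument is correct. Parts (i), (ii), and (iv) match the paper's proof almost verbatim: integrate the energy identity, use lower boundedness of $H$ to get a finite limit $\bar H$, pass this through continuity of $H$ to make it constant on $\cC$, and handle the cluster set by standard compactness considerations.

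The genuine difference is in part (iii). The paper first deduces $\int_0^\infty \dist^2(0,\partial H(\hx(t)))\,dt<\infty$ from Lemma~\ref{lem:descent}, and then invokes external results (\cite[Proposition~6.5.1]{Aubin1984} and \cite[Corollary~3]{Duchi2018}) to assert that along the \emph{same} sequence $t_k$ with $\hx(t_k)\to x_\infty$ one already has $\dist(0,\partial H(\hx(t_k)))\to 0$, after which outer semicontinuity finishes. Your sliding construction to nearby times $\tau_k\in[t_k,t_k+1]$ is a more elementary and fully self-contained substitute: it converts the tail smallness of $\int_{t_k}^{t_k+1}\|\dot{\hx}\|^2$ into a pointwise small $\|\dot{\hx}(\tau_k)\|$ while keeping $\hx(\tau_k)$ close to $\hx(t_k)$ via Cauchy--Schwarz, and then uses the inclusion $-\lambda\dot{\hx}(\tau_k)\in\partial H(\hx(\tau_k))$ directly. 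The only care required---which you already flag---is to select $\tau_k$ inside the full-measure set where the differential inclusion and the derivative both exist; since that set has full measure and the set $\{t\in[t_k,t_k+1]:\|\dot{\hx}(t)\|^2\le \int_{t_k}^{t_k+1}\|\dot{\hx}\|^2\}$ has positive measure, this is harmless. Your route avoids the black-box citations at the price of a short explicit construction; either approach is valid.
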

\begin{proof}
	Recall that, as shown in Lemma \ref{lem:descent}, $H(\hx(t))$ is nonincreasing. This, together with the lower boundedness  of $H(\hx(t))$ by Assumption \ref{ass:det}, yields that there exists a finite constant $\bar H$ such that
	\be\label{eq:objection converge}
	H(\hx(t))\to \bar{H}.
	\ee  Integrating \eqref{eq:chain-rule-application} over $[0,+\infty)$ and using \eqref{eq:objection converge}, we obtain  
	\[\int_{0}^{\infty}{\|\dot{\hx}(t)\|^2} dt<+\infty.\]
	Thus, item (i) holds.	
 
	For any $x_{\infty}\in \cC$, there exists an increasing sequence $t_k$ such that $\hx(t_k)\to x_{\infty}$. 
	Since $f$ is continuously differentiable and $h$ is locally Lipschitz continuous,  $H$ is continuous over $\R^n$. Therefore, it holds that
	\be\label{eq:obj-sub-converge}
	H(\hx(t_k))\to H(\hx_{\infty}).
	\ee	 
	Combining \eqref{eq:obj-sub-converge} with \eqref{eq:objection converge}, we have $H(\hx_{\infty})= \bar{H}$. Further, by the arbitrariness of $\hx_{\infty}$, $H(\hx(t))$ is constant on $\cC$. We have item (ii). 
	
	Let $t_1=0$, $t_2\to+\infty$ in \eqref{eq:descent}. Then by \eqref{eq:objection converge},  we have 
	\[
	\lambda^{-1}\int_{0}^{\infty}{\dist^2(0,\partial H(\hx(t)))}dt\leq H(\hx(0))- \bar{H}<+\infty. 
	\]	
	Following \cite[Proposition 6.5.1]{Aubin1984}, if $\hx$ is  absolutely continuous, the proof of \cite[corollary 3]{Duchi2018} is also true. Therefore,  there must exist an increasing sequence $t_k$ identical to the one previously mentioned such that  $\dist(0,\partial H(\hx(t_k)))\to 0$. Then by the outer semicontinuity of $\partial H$, we have $\dist(0,\partial H(x_{\infty}))= 0$, i.e., $x_{\infty}\in\crit H$, hence item (iii) is derived.

    Due to the boundedness of  $\hx$,  $\cC$ is nonempty and bounded. Using the same proof  as \cite[Lemma 3.3(vii)]{Radu2020},  it follows that $\cC$ is closed. Thus,  $\cC$ is compact.  From the definition of cluster point,  we obtain  $\dist((\hx(t)),\cC)\to 0$, which completes the proof of  item (iv). 
\end{proof} 
\begin{remark}
Theorem \ref{th:weak-convergence} (iii) and (iv) show that any cluster point of $\hx(t)$ is a critical point of $H$, and indicate  the existence of the cluster point of a bounded trajectory. In next subsection, we will  establish stronger convergence results under the KL property.	
\end{remark}

\subsection{Global convergence under KL property}
To continue  the global convergence analysis, let us first provide the definition of the  KL property. 
\begin{definition}\label{def:kl}
	A locally Lipschitz continuous function $f:\R^n \to\R$ is said to satisfy the \emph{Kurdyka-\L{}ojasiewicz (KL) property} at $\bar{x}$ if there exist a constant $\eta\in(0,+\infty]$, a neighborhood $U$ of $\bar{x}$ and a continuous concave function $\varphi:[0,\eta)\to[0,+\infty)$ such that
	\begin{itemize}
		\item[(i)] $\varphi(0)=0$;
		\item[(ii)] $\varphi$ is continuously differentiable and $\varphi'>0$ on $(0,\eta)$;
		\item[(iii)] for all $x\in U\cap \{x:f(\bar{x})<f(x)<f(\bar{x})+\eta\}$, the following KL inequality holds
		\[\varphi'(f(x)-f(\bar{x}))\cdot\dist(0,\partial f(x))\geq 1.\]
	\end{itemize}
\end{definition}
A locally Lipschitz function $f$ is said to be a \emph{KL function}, if it satisfies the KL property at every point $x\in\R^n$. In particular, if $\varphi(s)=\sigma s^{1-\theta}$ with $\sigma>0$  and $\theta\in(0,1)$,  we say that  $f$  is a KL function with \emph{\L{}ojasiewicz exponent} $\theta$. 	

\begin{remark}
   The concept of the KL property was originally introduced in seminal works such as \cite{1963Une,1998Kurdyka}, and has  been further developed and extended by subsequent studies, including \cite{2007Bolte,AB2009, ABRS2010}. The KL property is applicable to a wide range of functions, such as semialgebraic functions, real subanalytic functions, and functions that can be defined within an $o$-minimal structure. In the context of the problems considered in this paper, the function $H$ can possess the KL property if both $f$ and $h$ are locally Lipschitz semialgebraic functions. For detailed properties and examples of KL functions, we refer interested readers to \cite{AB2009, ABRS2010, BST2014, DDKL2020}.	 
\end{remark}


We are now ready to establish the global  convergence of $\hx(t)$ generated by the  differential inclusion \eqref{eq:inclusion} in the context of KL property. 
\begin{theorem}\label{th:global-conv}
Under Assumptions \ref{ass:det} and \ref{ass:chain-rule},  suppose that $H$ is a  KL  function and $\hx$ is a bounded  trajectory generated by \eqref{eq:inclusion}. Then  $\hx(t)$ converges to a critical point of $H$.
\end{theorem}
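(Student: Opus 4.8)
The plan is to prove that the trajectory has finite length, i.e. $\int_0^{\infty}\|\dot{\hx}(t)\|\,dt<+\infty$, from which convergence follows immediately by the Cauchy criterion, and then to identify the limit as a critical point using Theorem \ref{th:weak-convergence}. First I would pin down the limiting objective value. Since $\hx$ is bounded, Theorem \ref{th:weak-convergence}(iv) guarantees that the cluster set $\cC$ is nonempty and compact, while Lemma \ref{lem:descent} shows that $H(\hx(t))$ is nonincreasing; combined with the lower boundedness in Assumption \ref{ass:det}, this yields $H(\hx(t))\searrow\bar H$ with $H\equiv\bar H$ on $\cC$ and $\dist(\hx(t),\cC)\to 0$. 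The degenerate case where $H(\hx(t_1))=\bar H$ at some finite $t_1$ is disposed of directly: monotonicity forces $H(\hx(t))=\bar H$ for all $t\ge t_1$, so \eqref{eq:chain-rule-application} gives $\dot{\hx}(t)=0$ for a.e.\ $t\ge t_1$, hence $\hx$ is eventually constant and converges to a point of $\cC\subseteq\crit H$. I may therefore assume $H(\hx(t))>\bar H$ for all $t\ge 0$.

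The core of the argument invokes the KL property in its uniformized form over the compact set $\cC$ (as in the standard uniformization lemma, e.g.\ \cite[Lemma 6]{BST2014}): there exist $\varepsilon>0$, $\eta>0$ and a desingularizing function $\varphi$ so that the KL inequality $\varphi'(H(x)-\bar H)\,\dist(0,\partial H(x))\ge 1$ holds for every $x$ with $\dist(x,\cC)<\varepsilon$ and $\bar H<H(x)<\bar H+\eta$. Because $\dist(\hx(t),\cC)\to 0$ and $H(\hx(t))\searrow\bar H$, there is a time $t_0$ beyond which $\hx(t)$ lies in this region, so the KL inequality applies along the trajectory for all $t\ge t_0$. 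The key computation is to differentiate $\varphi(H(\hx(t))-\bar H)$: since $H\circ\hx$ is absolutely continuous and $\varphi$ is $C^1$, for a.e.\ $t\ge t_0$ the chain rule together with \eqref{eq:chain-rule-application} gives
\[
\frac{d}{dt}\varphi(H(\hx(t))-\bar H)=-\lambda\,\varphi'(H(\hx(t))-\bar H)\,\|\dot{\hx}(t)\|^2.
\]
Bounding $\varphi'(H(\hx(t))-\bar H)\ge 1/\dist(0,\partial H(\hx(t)))\ge 1/(\lambda\|\dot{\hx}(t)\|)$ via the KL inequality and \eqref{eq:dist-dot-x}, I obtain the pointwise estimate
\[
\|\dot{\hx}(t)\|\le -\frac{d}{dt}\varphi(H(\hx(t))-\bar H),\qquad\text{a.e. } t\ge t_0.
\]

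Integrating this over $[t_0,T]$ and using $\varphi\ge 0$ yields $\int_{t_0}^{T}\|\dot{\hx}(t)\|\,dt\le\varphi(H(\hx(t_0))-\bar H)$, and letting $T\to\infty$ gives $\int_{t_0}^{\infty}\|\dot{\hx}(t)\|\,dt<+\infty$. Thus the trajectory has finite length, so for $t'>t\ge t_0$ one has $\|\hx(t')-\hx(t)\|\le\int_{t}^{t'}\|\dot{\hx}(s)\|\,ds\to 0$ as $t\to\infty$; by the Cauchy criterion $\hx(t)$ converges to some $x^*$. Since $x^*$ is then a cluster point, $x^*\in\cC$, and Theorem \ref{th:weak-convergence}(iii) identifies $x^*$ as a critical point of $H$, which completes the argument. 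The main obstacle I expect is technical rather than conceptual: securing the \emph{uniformized} KL inequality over all of $\cC$ (not merely at a single point) and verifying that $\varphi(H(\hx(\cdot))-\bar H)$ is genuinely absolutely continuous, so that the chain-rule differentiation and the subsequent integration are legitimate despite $\partial H$ being set-valued and $\hx$ being only absolutely continuous.
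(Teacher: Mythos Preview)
Your proposal is correct and follows essentially the same route as the paper: establish the limiting value $\bar H$, dispose of the degenerate case, invoke the uniformized KL inequality (\cite[Lemma 6]{BST2014}) over the compact cluster set, differentiate $\varphi(H(\hx(t))-\bar H)$ via \eqref{eq:chain-rule-application} and \eqref{eq:dist-dot-x} to obtain the finite-length estimate $\|\dot{\hx}(t)\|\le -\frac{d}{dt}\varphi(H(\hx(t))-\bar H)$, and conclude by Cauchy's criterion together with Theorem \ref{th:weak-convergence}(iii). Your treatment of the degenerate case---deducing $\dot{\hx}(t)=0$ a.e.\ from \eqref{eq:chain-rule-application} and hence that $\hx$ is eventually constant---is in fact slightly cleaner than the paper's, which only records $\hx(t)\in\crit H$ for a.e.\ $t\ge l_0$ without explicitly closing the convergence statement.
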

\begin{proof}
As shown in Theorem \ref{th:weak-convergence}, $H(\hx(t))$ converges to a constant, denoted by $\bar H$.  
If there exists $l_0>0$ such that $H(\hx(l_0))=\bar{H}$, it must hold that $H(\hx(t))=\bar{H}$ for any $t\geq l_0$, since $H(\hx(t))$ is nonincreasing by Lemma \ref{lem:descent}. Then by integrating both sides of  \eqref{eq:differential-dist} over $[l_0,+\infty)$, we have 
\[
\lambda^{-1}\int_{l_0}^{\infty}{\dist^2(0,\partial H(\hx(t)))}dt\leq 0. 
\]
Therefore, $\dist(0,\partial H(\hx(t)))=0$ for a.e. $t\geq l_0$. That means, $\hx(t)$ is the critical point of $H$ for a.e. $t\geq l_0$.

Otherwise, suppose that $H(\hx(t))> \bar{H}$ for any $t> 0$. By \eqref{eq:objection converge}, for any $\eta>0$, there exists $l_1>0$ such that 
\[H(\hx(t))< \bar{H}+\eta, \quad \forall t\geq l_1.\]
Theorem \ref{th:weak-convergence} (iv) implies that for any $\varepsilon>0$, there exists $l_2>0$ such that 
\[\dist(\hx(t),\cC)<\varepsilon, \quad \forall t\geq l_2.\]
Let $T_0:=\max\{l_1, l_2\}$. Then it holds that \be\label{inter}
\dist(\hx(t),\cC)<\varepsilon \text{ and } \bar{H}<H(\hx(t))<\bar{H}+\eta, \quad \forall t\geq T_0.
\ee
Since $H(\hx)$ is  a KL function, then from  the uniformized KL property (\cite[Lemma 6]{BST2014}),
there exists a continuously differentiable and  concave function $\varphi$ such  that
\be\label{eq:kl-phi}
\varphi'(H(\hx)-\bar{H})\cdot \dist(0,\partial H(\hx))\geq 1, \, \forall t\geq T_0.
\ee
Meanwhile,  $H(\hx)$ admits the chain rule, which, together with \eqref{eq:chain-rule-application}, leads  to 
\bee
\begin{aligned}
\frac{d}{dt}\varphi(H(\hx(t))-\bar{H})&=\varphi'(H(\hx(t))-\bar{H})\cdot\langle \partial H(\hx(t)), \dot{\hx}(t)\rangle\\ 
&= \varphi'(H(\hx(t))-\bar{H})\cdot(-\lambda\|\dot{\hx}(t)\|^2).
\end{aligned}
\eee
Combining the above equality with  \eqref{eq:dist-dot-x} and \eqref{eq:kl-phi},  we have 
\be\label{eq:varphi-dotx}
\frac{d}{dt}\varphi(H(\hx(t))-\bar{H})\leq  \frac{-\lambda\|\dot{\hx}(t)\|^2}{\dist(0,\partial H(\hx(t)))}\leq -\|\dot{\hx}(t)\|.
\ee
Since $H(\hx(t))\to \bar{H}$ (from \eqref{eq:objection converge}), it holds from by the continuity of $\varphi$ that  $\varphi(H(\hx(t)))\to \varphi(\bar{H})$.
Then, by integrating \eqref{eq:varphi-dotx} on $[T_0,+\infty)$, we obtain
\bee\label{eq:finite length}
\int_{T_0}^{\infty}{\|\dot{\hx}(t)\|} dt<+\infty,
\eee
which implies that for any $\epsilon>0$, there exists $T\ge T_0$ such that for every $t_2\ge t_1\geq T$, 
\[\|\hx(t_2)-\hx(t_1)\|\leq \int_{t_1}^{t_2}{\|\dot{\hx}(t)\|} dt<\epsilon.\]
Hence, according to Cauchy's criterion for convergence,  the limit of $\hx(t)$ exists,  
and by Theorem \ref{th:weak-convergence} (iii), $\hx(t)$ converges globally to a critical point  of $H(\hx)$. 
\end{proof} 

The convergence rates of $\hx(t)$ generated by \eqref{eq:inclusion} are provided in the following theorem, which is  associated with \L{}ojasiewicz  exponent $\theta$.

\begin{theorem}\label{th:conv-rate}
 Under Assumptions \ref{ass:det} and \ref{ass:chain-rule}, suppose that  $H$ is a KL function with  \L{}ojasiewicz exponent $\theta$ and $\hx$ is a bounded trajectory of \eqref{eq:inclusion}. Then the following statements hold true:
\begin{itemize}
	\item[(i)] if $\theta\in(0,1/2]$, there exist  constants $a_1, a_2, b_1>0$ and $T_1>0$ such that 
	\[
	H(\hx(t))-\bar{H}\leq a_1\exp{(-b_1 t)} \ \text{ and }\  \|\hx(t)-x_{\infty}\|\leq a_2\exp{(-b_1(1-\theta) t)},  \ \forall t\geq T_1\,;
	\]
    \item[(ii)] if $\theta\in(1/2,1)$, there exist  constants $c_1, c_2>0$ and  $T_2>0$ such that 
    \[
    H(\hx(t))-\bar{H}\leq c_1 t^{\frac{1}{1-2\theta}} \  \text{ and } \ 
    \|\hx(t)-x_{\infty}\|\leq c_2t^{\frac{1-\theta}{1-2\theta}}, \ \forall t\geq T_2.
    \]	
\end{itemize}
Here, $x_{\infty}$ is the limit point of $\hx(t)$ and $H(\hx(t))$  converges to $ \bar{H}$. 
\end{theorem}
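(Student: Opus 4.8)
The plan is to reduce both parts to a single scalar differential inequality for the energy $r(t):=H(\hx(t))-\bar H$ and then run a standard ODE comparison, distinguishing the two ranges of $\theta$. By Theorem \ref{th:global-conv} we have $\hx(t)\to x_\infty$ and $r(t)\to 0$; moreover we may assume $r(t)>0$ for all large $t$, since otherwise $\hx$ reaches $\crit H$ in finite time (as in the first case of that proof) and both bounds hold trivially. Fix $T_0$ so that the uniformized KL inequality \eqref{eq:kl-phi} holds on $[T_0,\infty)$ with $\varphi(s)=\sigma s^{1-\theta}$; then $\varphi'(s)=\sigma(1-\theta)s^{-\theta}$ and the inequality rearranges to $\dist(0,\partial H(\hx(t)))\ge \tfrac{1}{\sigma(1-\theta)}\,r(t)^{\theta}$.

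First I would derive the master inequality. Substituting this lower bound on $\dist(0,\partial H(\hx(t)))$ into the descent estimate \eqref{eq:differential-dist} gives, for a.e.\ $t\ge T_0$,
\[
\dot r(t)\le -\lambda^{-1}\dist^2(0,\partial H(\hx(t)))\le -c\,r(t)^{2\theta},\qquad c:=\frac{1}{\lambda\sigma^2(1-\theta)^2}>0 .
\]
This single relation drives both rates. For the iterate bounds I would also record the finite-length estimate coming from \eqref{eq:varphi-dotx}: integrating $\frac{d}{dt}\varphi(r(t))\le-\|\dot{\hx}(t)\|$ from $t$ to $\infty$ and using $\varphi(r(t))\to0$ yields
\[
\|\hx(t)-x_\infty\|\le\int_t^\infty\|\dot{\hx}(s)\|\,ds\le\varphi(r(t))=\sigma\,r(t)^{1-\theta},
\]
so once $r(t)$ is controlled, the iterate rate follows by raising to the power $1-\theta$.

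For item (i), where $\theta\in(0,1/2]$, since $r(t)\to0$ there is $T_1\ge T_0$ with $r(t)\le1$ on $[T_1,\infty)$; because $2\theta\le1$ and $0<r(t)\le1$ we have $r(t)^{2\theta}\ge r(t)$, so the master inequality becomes $\dot r(t)\le -c\,r(t)$, and integrating this linear differential inequality gives $r(t)\le a_1\exp(-b_1 t)$ with $b_1=c$. The iterate bound $\|\hx(t)-x_\infty\|\le a_2\exp(-b_1(1-\theta)t)$ then follows immediately from the finite-length estimate. For item (ii), where $\theta\in(1/2,1)$, I would instead work with $r(t)^{1-2\theta}$: since $1-2\theta<0$, multiplying the master inequality by the negative factor $(1-2\theta)r(t)^{-2\theta}$ reverses it to $\frac{d}{dt}r(t)^{1-2\theta}\ge c(2\theta-1)>0$; integrating from $T_2$ gives $r(t)^{1-2\theta}\ge c(2\theta-1)(t-T_2)$, whence $r(t)\le c_1\,t^{1/(1-2\theta)}$ after absorbing constants for $t$ large, and the finite-length estimate delivers $\|\hx(t)-x_\infty\|\le c_2\,t^{(1-\theta)/(1-2\theta)}$.

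The main obstacle is the comparison step in item (ii): since $r$ is only absolutely continuous rather than $C^1$, I must justify differentiating $r^{1-2\theta}$ almost everywhere and integrating the result, which relies on $r(t)>0$ (secured by the finite-time dichotomy above) to keep the singular factor $r^{-2\theta}$ finite. The remaining bookkeeping—matching the constants $a_1,a_2,c_1,c_2$ and the exponents to the claimed forms, and absorbing the time shift $(t-T_2)$ into $t^{1/(1-2\theta)}$—is routine.
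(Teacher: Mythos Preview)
Your proposal is correct and follows essentially the same route as the paper: derive the master inequality $\dot r\le -c\,r^{2\theta}$ from \eqref{eq:differential-dist} plus the KL bound, integrate it separately in the two regimes, and then read off the iterate rate from the finite-length estimate $\|\hx(t)-x_\infty\|\le\sigma r(t)^{1-\theta}$. The only cosmetic difference is your treatment of $\theta\in(0,1/2)$: the paper first argues that KL with exponent $\theta<1/2$ and $r<1$ implies KL with exponent $1/2$, and then handles only the case $\theta=1/2$ via $\frac{d}{dt}\log r\le -c$; you instead use $r^{2\theta}\ge r$ directly to get the same linear inequality---these are the same observation in different packaging.
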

\begin{proof}
Without loss of generality, we assume that $H(\hx(t))>\bar H$ for all $t\ge0$. With a little abuse of notation, we still use the same symbol $T_0$ such that \eqref{inter} is satisfied. 
Since $H(\hx)$ is a KL function with the \L{}ojasiewicz exponent $\theta$, substituting $\varphi(s)=\sigma s^{1-\theta}$ into \eqref{eq:kl-phi},  we attain that for any $t\geq T_0$,
\be\label{eq:rates1}
\sigma (1-\theta)\frac{\dist(0,\partial H(\hx(t)))}{(H(\hx(t))-\bar{H})^{\theta}}\geq 1,
\ee
which, together with \eqref{eq:differential-dist}, yields 
\be\label{eq:kl-H}
\frac{d}{dt}H(\hx(t))\leq \frac{-\lambda^{-1}}{\sigma^2(1-\theta)^2}(H(\hx(t))-\bar{H})^{2\theta}.
\ee
As $H(\hx(t))\to \bar{H}$, there exists $T_1\geq T_0$ such that $H(\hx(t))- \bar{H}<1$ for any $t\geq T_1$. Hence, if $\theta\in(0,1/2)$, it follows from  \eqref{eq:rates1} that
\[
\sigma \frac{\dist(0,\partial H(\hx(t)))}{(H(\hx(t))-\bar{H})^{1/2}}\geq 1,
\]
which indicates that $H(\hx(t))$ satisfies  the KL inequality  with exponent $1/2$. Thus we only need to prove  item (i)  for  $\theta=1/2$. When $\theta=1/2$, it indicates from \eqref{eq:kl-H} that
\bee
\frac{d}{dt}(H(\hx(t))-\bar{H})\leq \frac{-4}{\sigma^2\lambda}(H(\hx(t))-\bar{H}).
\eee
By  rearranging the above inequality and noticing $H(\hx(t))> \bar H$, we obtain 
\bee
\frac{d}{dt}\log(H(\hx(t))-\bar{H})\leq \frac{-4}{\sigma^2\lambda}.
\eee
Integrating the above inequality from $T_1$ to $t$ with $t\geq T_1$, it follows that 
\bee
\log\left(\frac{H(\hx(t))-\bar{H}}{H(\hx(T_1))-\bar{H}}\right)\leq \frac{-4}{\sigma^2\lambda} (t-T_1),
\eee
which derives
\[
H(\hx(t))-\bar{H}\leq (H(\hx(T_1))-\bar{H})\cdot\exp\left(\frac{-4}{\sigma^2\lambda}(t-T_1)\right).
\]
Therefore, the convergence rate of $H(\hx)$ of item (i) with $a_1:= (H(\hx(T_1))-\bar{H})\cdot\exp(\frac{4 T_1}{\sigma^2\lambda})$ and  $b_1:=\frac{4}{\sigma^2\lambda}$ is proved.
   
If $\theta\in(1/2,1)$,  
from \eqref{eq:kl-H} it holds that
\bee
(H(\hx(t))-\bar{H})^{-2\theta}\frac{d}{dt}(H(\hx(t))-\bar{H})= \frac{1}{1-2\theta}\frac{d}{dt}(H(\hx(t))-\bar{H})^{1-2\theta}
\leq \frac{-1}{\sigma^2(1-\theta)^2\lambda}. 
\eee 
Integrating both sides of the above inequality over $[T_0,t]$ for $t\geq T_0$, then by  $1-2\theta<0$ we obtain 
\bee
(H(\hx(t))-\bar{H})^{1-2\theta}\geq (H(\hx(T_0))-\bar{H})^{1-2\theta}+ \frac{2\theta-1}{\sigma^2(1-\theta)^2\lambda}(t-T_0)\geq \frac{2\theta-1}{\sigma^2(1-\theta)^2\lambda}(t-T_0).
\eee  
Thus, there is a positive constant $c_1>0$ such that for $t\geq T_0$,
\[
H(\hx(t))-\bar{H}\leq \left(\frac{2\theta-1}{\sigma^2(1-\theta)^2\lambda}(t-T_0)\right)^{\frac{1}{1-2\theta}}\leq c_1t^{\frac{1}{1-2\theta}}.
\]
We thus derive the convergence rate of $H(\hx)$ in item (ii) with $T_2:=T_0$.

From \eqref{eq:varphi-dotx}, for any $t_2\ge t_1\ge T_0$, it holds that 
\bee\label{eq:length}
\|\hx(t_2)-\hx(t_1)\|=\|\int_{t_1}^{t_2}{\dot{\hx}(t)} dt \|\leq\int_{t_1}^{t_2}{\|\dot{\hx}(t)\|} dt\leq \varphi(H(\hx(t_1))-\bar{H})-\varphi(H(\hx(t_2))-\bar{H}).
\eee
Letting $t_2\to\infty$, by $\hx(t_2)\to x_{\infty}$ we have 
\[
\|\hx(t_1)-x_{\infty}\|\leq \sigma (H(\hx(t_1))-\bar{H})^{1-\theta} \text{ for } t_1\geq T_0.
\]
Hence, the convergence rates of $\hx(t)$ can be obtained accordingly from the above relation and the  convergence rates of $H(\hx(t))$. 
\end{proof}
\section{LP-SADMM algorithm and convergence analysis}\label{sec:sto-ADMM}
In this section, we focus on problem \eqref{eq:finite-p}, where   $h$ is  $\varrho$-weakly convex. To tackle the potential difficulties caused by the weak convexity in theoretical analysis, we  draw into  the utilization of smooth Moreau envelop of $h$.  Since $N$ can be very large in many scenarios, computing the full gradient $\nabla f$ at any given point becomes challenging. To cope with this issue, we propose to use a stochastic gradient estimator $\Sto f$ to approximate the full gradient. This leads to the development of a linearized proximal stochastic ADMM method (LP-SADMM). We then derive the continuous-time stochastic differential equation of the proposed algorithm  and explore its convergence properties.

\subsection{LP-SADMM and continuous-time system}\label{subsec:lp-sadmm-system}

The main algorithmic framework of LP-SADMM is presented in Algorithm \ref{alg:sto-Modified ADMM}. 
\begin{algorithm}
\caption{ LP-SADMM }\label{alg:sto-Modified ADMM}
\begin{algorithmic}[1]
\Require Initial point $(x^0,z^0,u^0) \in \R^n\times\R^m\times\R^m$,  parameters $\eta,\rho>0$, $\tau>\rho\|A^TA\|+1/\eta$ and $0<\mu<1/\varrho$.
\For{ $k=0,1,2,\ldots$ } 
\State Update $x^{k},z^{k},u^{k}$ as follows:
\begin{subnumcases}{\label{eq:iter-sto}}
				x^{k+1}=x^k-\frac{1}{\tau}\left(\Sto f(x^k)+\rho A^T(Ax^k-z^k+\frac{1}{\rho}u^k)\right),\label{eq:iter1-sto}\\
				\notag\\
				z^{k+1}=\argmin_{z\in\R^m}\left\lbrace h_{\mu}(z)+\langle u^k, Ax^{k+1}-z\rangle+\frac{\rho}{2}\|Ax^{k+1}-z\|^2\right\rbrace,\label{eq:iter2-sto}\\
				\notag\\
				u^{k+1}=u^k+Ax^{k+1}-z^{k+1}\label{eq:iter3-sto}.
\end{subnumcases}
\State  Set $k \Leftarrow k+1$.
\EndFor 
\end{algorithmic}
\end{algorithm}
 
In the remainder of this subsection, we aim  to derive  the continuous-time system of Algorithm \ref{alg:sto-Modified ADMM}, which is a first-order stochastic differential equation.   
To simplify the  notation, we use $\xi^k:=\Sto f(x^k)-\nabla f(x^k)$ to denote the gradient noise at $x^k$.
We now state some  essential assumptions  regarding  the objective function of \eqref{eq:finite-p} and stochastic gradient noise below.
\begin{assumption}\label{ass:acc-setting}
	 Assumption \ref{ass:det} holds. Furthermore,  suppose  that 
	\begin{itemize}
		\item[(i)] Function $f$ is  $L_f$-smooth, i.e., $f$ is  continuously differentiable with $L_f$-Lipschitz continuous gradients, that is,
		\[\|\nabla f(x)-\nabla f(y)\|\leq L_f\|x-y\|, \quad \forall x, y\in\R^n.\]		
		\item[(ii)] Function $h$ is $\varrho$-weakly convex, and is $L_h$-Lipschitz continuous, i.e.
  \[|h(x)-h(y)|\leq L_h\|x-y\|, \quad \forall x, y\in\R^n.\]
	\end{itemize}	
\end{assumption} 

\begin{remark}\label{re:moreau} Some comments  are given as follows.
	\begin{itemize}
		\item[(i)]  Under  Assumption \ref{ass:det}(i), as discussed in Subsection \ref{sec:moreau and weak convex}, $h_{\mu}$ is lower bounded. Moreover, it is  convex  and  $(\max\{\frac{1}{\mu}, \frac{\varrho}{1-\varrho\mu}\})$-smooth following   Assumption \ref{ass:acc-setting}(ii) and  the setting  $\mu < 1/\varrho$ in  Algorithm \ref{alg:sto-Modified ADMM}.
		\item[(ii)] Under Assumption \ref{ass:acc-setting}, it is easy to check  that  $H_{\mu}$ is $L$-smooth, where $H_\mu(x):=f(x)+h_{\mu}(Ax)$ and  $L:=L_f+\max\{\frac{1}{\mu}, \frac{\varrho}{1-\varrho\mu}\}\cdot\|A\|^2$. Besides, Assumption \ref{ass:det}(i) ensures the lower boundedness of  $H_{\mu}$. 
		\item[(iii)] According to  \cite[Subsection 3.2]{Axel2021}, Assumption \ref{ass:det}(iii) and Assumption \ref{ass:acc-setting}(i)-(ii) are   necessary  for establishing the relationship between  $\crit H$ and $\crit H_{\mu}$ as shown in Lemma \ref{lem:critical}.
	\end{itemize}
\end{remark}

Motivated by recent developments in stochastic algorithms such as  \cite{Mandt2015ContinuousTime, stephan2016, Li2017icml, DBLPshibin}, we make the following assumption on the random noise $\xi^k$. 
\begin{assumption}\label{ass:first-sto}
Assume that  for all $k\ge0$, the gradient noise  $\xi^k\overset{i.i.d.}{\sim} \cN(0,1)$. 
\end{assumption}
\begin{remark}\label{re:first-sto-setting} Some comments on Assumption \ref{ass:first-sto} are provided as follows. 
\begin{enumerate}
\item[(i)] Due to the finite-sum structure of $f$ in \eqref{eq:finite-p}, a popular way to compute the stochastic gradient at given point $x$ is through  $\Sto f(x):=\frac{1}{|\cS|}\sum_{i\in \cS} \nabla f_{i}(x)$, if $f_i, i=1,\cdots,N$ are differentiable. Here, $\cS$ is a set of randomly selected indices drawn uniformly at random from $\{1,\cdots, N\}$ and $|\cS|$ represents the cardinality of set $\cS$. By the (Lindeberg-L\'{e}vy) central limit theorem, the stochastic gradient  $\Sto f(x)$ is  Gaussian distributed with variance $\propto 1/|\cS|$ for  sufficiently large $|\cS|$. 
This ensures that Assumption \ref{ass:first-sto} is reasonable. 
\item[(ii)] As presented in Subsection \ref{sec:brownian},  a standard  Brownian motion $\{W(t), t\geq 0\}$ has the property that $W(t)-W(s)\sim \cN(0,t-s)$ for any $ t>s\geq 0$.  Thus under Assumption \ref{ass:first-sto}, $\xi^k$ can be expressed as
\be\label{eq:Wiener process}
\xi^k=\sqrt{\rho}\left(W\left(t+\frac{1}{\rho}\right)-W(t)\right) \ \text{ with } \  t=\frac{k}{\rho}.
\ee	
\end{enumerate}
\end{remark}

We now proceed to derive a stochastic continuous dynamical system  of  Algorithm \ref{alg:sto-Modified ADMM}. Let $\{x^k\}, \{z^k\}, \{u^k\}$ be generated by Algorithm \ref{alg:sto-Modified ADMM}. By substituting $\xi^k=\Sto f(x^k)-\nabla f(x^k)$ into \eqref{eq:iter1-sto}, and combining the optimality condition for \eqref{eq:iter2-sto}: 
\[0=\nabla h_\mu(z^{k+1})-\rho(Ax^{k+1}-z^{k+1}+\frac{1}{\rho}u^k),\]
we obtain
\be\label{eq:sto-opt-f}
0=\tau(x^{k+1}-x^k)+\nabla f(x^k)+A^{T}\nabla  h_\mu(z^{k+1})+\xi^k-\rho A^{T}A(x^{k+1}-x^{k})+\rho A^T(z^{k+1}-z^k).
\ee
Similar to the previous section, we introduce the \emph{Ansatz}:   $x^k\approx\hx(k/\rho)$, $z^k\approx\hz(k/\rho)$ and  $u^k\approx\hu(k/\rho)$ for some continuously differentiable stochastic processes $\hx$, $\hz$ and $\hu$. Put  $k=t/s$ with $s=1/\rho$. When $s$ is sufficiently small, it follows that $\hx(t)\approx x^{t/s}=x^k, \hx(t+s)\approx x^{(t+s)/s}=x^{k+1}, \hz(t)\approx z^{t/s}=z^k, \hz(t+s)\approx z^{(t+s)/s}=z^{k+1}, \hu(t)\approx u^{t/s}=u^k, \hu(t+s)\approx u^{(t+s)/s}=u^{k+1}$.
  Combining \eqref{eq:Wiener process} and \eqref{eq:sto-opt-f} and applying Taylor's theorem  yield 
	\be\label{eq:first-sto-inclusion}
 \begin{aligned}
	0&=\tau(s\dot{\hx}(t)+O(s^2))+\nabla f(\hx(t))+A^{T}\nabla  h_\mu(\hz(t)+O(s))+s^{-1/2}(W(t+s)-W(t))\\
 &\quad\quad-A^TA\dot{\hx}(t)+A^T\dot{\hz}(t).
 \end{aligned}
	\ee
For the term $W(t+s)-W(t)$, the following holds true: 
	\be\label{eq:second-inclusion-w}
	W(t+s)-W(t)=s\dot{W}(t)+O(s^2).
	\ee 
	Then, by inserting \eqref{eq:second-inclusion-w} into \eqref{eq:first-sto-inclusion} and due to $\tau s> \|A^T A\| +s/\eta$,  there exists a constant $\lambda> \|A^TA\|$ such that 
	\be\label{inc-s}
	0=\lambda\dot{\hx}(t)+O(s)+\nabla f(\hx(t))+A^{T}\nabla  h_\mu(\hz(t)+O(s))+s^{1/2}\dot{W}(t)-A^TA\dot{\hx}(t)+A^T\dot{\hz}(t).
	\ee
In an analogous way to  \eqref{eq:condition1} and \eqref{eq:condition2},  it holds that  for any $t\in[0,+\infty)$, $A\hx(t)=\hz(t)$ and $A\dot{\hx}(t)=\dot{\hz}(t)$ as $s\to 0$, which, together with the continuity of $\nabla  h_\mu$,  implies $\nabla  h_\mu(\hz(t)+O(s))\to \nabla  h_\mu(A\hx(t))$.  
For the term $s^{1/2}\dot{W}(t)=s^{1/2}dW/dt$, by \eqref{eq:Wiener process} its  Euler discretization is $s^{1/2}\sqrt{\Delta t}\xi^k/\Delta t$.  
 Since $\Delta t= s$, the Euler discretization of $s^{1/2}\dot{W}(t)$ is the random variable $\xi^k$.  
 Therefore, as $s\to 0$, 
 the term $s^{1/2}\dot{W}(t)$ cannot be neglected,  and we thus consider the following stochastic differential equation 
\be\label{eq:sto-inclusion}
0=\lambda\dot{\hx}(t)+\nabla   H_{\mu}(\hx(t))+\rho^{-1/2}\dot{W}(t), \text{ with } \hx(0)=x^0,
\ee
where $\lambda> \|A^TA\|$.  

Following  \cite{Li2017icml},   we next present an approximation theorem  which indicates that  the stochastic differential equation \eqref{eq:sto-inclusion} is a weak approximation of Algorithm \ref{alg:sto-Modified ADMM} with $k=t\rho$ under certain conditions.   The concept of order $\alpha$ weak approximation  was originally  introduced in \cite[Definition 2]{Li2017icml}. For completeness, we state it here.  

\begin{definition}\label{def:weak approximation}
Let $T>0$ and $\alpha\geq 1$ be integers, and  $\rho>\max\{1,1/T\}$. Let $\cG$ be the set of all continuous functions $g:\R^n\to\R$ satisfying the condition that   there exist integers $\kappa_1,\kappa_2>0$ such that 
\[|g(x)|\leq \kappa_1(1+\|x\|^{2\kappa_2}), \quad\forall x\in \R^d. \]  
Let $\cG^{\alpha}$ denote the  set of $\alpha$-times continuously differentiable functions  which, together with its partial derivatives up to and including order $\alpha$, belong to $\cG$.   
 A continuous-time stochastic process $\{\hx(t):t\in[0,T]\}$ is  said to be an order $\alpha$ weak approximation of a discrete stochastic process $\{x^k: k\geq 0\}$, if for every  $g\in\cG^{\alpha+1}$, 
there exists a constant $C>0$ (independent of $\rho$)  such that 
\[\max_{k=0,1,\cdots, \lfloor \rho T\rfloor}\left|\Exp[g(x^k)]-\Exp[g(\hx(k/\rho))]\right|\leq \frac{C}{\rho^{\alpha}}.\]
\end{definition}

Now, we give the following order $1$ weak approximation theorem.  The proof of this theorem can be referred to \cite[Corollary 10]{Li2017icml}. 
\begin{theorem}\label{th:weak approximation}
Given  integer $T>0$, let $\{(x^k,z^k,u^k): k\geq 0\}$ be the iteration sequence generated by Algorithm \ref{alg:sto-Modified ADMM} with $\rho>\max\{1,1/T\}$.  
Suppose that the following conditions are satisfied: 
\begin{itemize}  
\item[(i)] $H_{\mu}$ is continuously differentiable with Lipschitz continuous gradients;
\item[(ii)] $H_{\mu}$ has weak derivatives (refer to  \cite[Definition 7]{Li2017icml}) up to order $3$, and for any index $|\beta|\leq 3$, there exist integers $\kappa_1,\kappa_2>0$ such that 
\[\|D^{\beta} H_{\mu}(x)\|\leq \kappa_1(1+\|x\|^{2\kappa_2}) \text{ for a.e. } x\in \R^d, \]
where $D^{\beta} H_{\mu}$ represents the order $\beta$ weak derivative of $H_{\mu}$.
\end{itemize}
Then $\{\hx(t):t\in[0,T]\}$ satisfying \eqref{eq:sto-inclusion}  is an order $1$ weak approximation of Algorithm \ref{alg:sto-Modified ADMM}. 
\end{theorem}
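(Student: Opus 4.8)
The plan is to recast the LP-SADMM iteration as a single-step Markov update in $x$ alone and then verify the one-step conditional-moment estimates that, through the backward-Kolmogorov/semigroup comparison of \cite[Theorem 9 and Corollary 10]{Li2017icml}, yield order $1$ weak approximation in the sense of Definition \ref{def:weak approximation}. Writing $s=1/\rho$, the first task is to eliminate the slaved variables $z^{k+1}$ and $u^k$ from the coupled optimality system \eqref{eq:sto-opt-f}. The optimality condition for \eqref{eq:iter2-sto} rearranges to $Ax^{k+1}-z^{k+1}=s(\nabla h_\mu(z^{k+1})-u^k)$, while the multiplier update \eqref{eq:iter3-sto} gives the contraction $u^{k+1}=(1-s)u^k+s\nabla h_\mu(z^{k+1})$. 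Since $h$ is $L_h$-Lipschitz by Assumption \ref{ass:acc-setting}(ii), the Moreau gradient obeys $\|\nabla h_\mu\|\le L_h$, so $\{u^k\}$ stays uniformly bounded and $z^{k+1}=Ax^{k+1}+O(s)$. Using the smoothness of $h_\mu$ from Remark \ref{re:moreau} together with $\tau s=\lambda+O(s)$, this reduces the effective $x$-recursion to
\be\label{eq:reduced-recursion}
x^{k+1}=x^k-\frac{s}{\lambda}\big(\nabla H_\mu(x^k)+\xi^k\big)+O(s^2),
\ee
with $\xi^k\sim\cN(0,1)$ i.i.d.\ by Assumption \ref{ass:first-sto}.

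Next I would carry out the moment matching. Fix $x^k=x$, set $\Delta:=x^{k+1}-x$ from \eqref{eq:reduced-recursion}, and let $\tilde\Delta:=\hx(s)-x$ be the increment of the diffusion \eqref{eq:sto-inclusion} started from $\hx(0)=x$. From \eqref{eq:reduced-recursion}, since $\xi^k$ has mean zero, $\Exp[\Delta]=-\tfrac{s}{\lambda}\nabla H_\mu(x)+O(s^2)$, while the second and third conditional absolute moments of $\Delta$ are $O(s^2)$ (the noise scale being $O(s)$ here). An It\^{o}--Taylor expansion of \eqref{eq:sto-inclusion}, using that its drift is $-\lambda^{-1}\nabla H_\mu$ and its diffusion coefficient is $\lambda^{-1}\rho^{-1/2}$ with $s=1/\rho$ (so the noise increment also has scale $O(s)$), produces $\Exp[\tilde\Delta]=-\tfrac{s}{\lambda}\nabla H_\mu(x)+O(s^2)$ with second and third moments likewise $O(s^2)$. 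Hence the first moments agree up to $O(s^2)$ and all second- and third-moment discrepancies are $O(s^2)$; the regularity and polynomial-growth hypotheses (i)--(ii) on $H_\mu$ ensure the implied constants grow only polynomially in $x$, uniformly in $k$. These are precisely the one-step conditions required by the order $1$ comparison lemma of \cite{Li2017icml}.

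With the local estimates in hand, I would invoke the semigroup telescoping of \cite[Theorem 9]{Li2017icml}. For a test function $g\in\cG^{3}$, let $u(\cdot,t)$ solve the Kolmogorov equation associated with \eqref{eq:sto-inclusion}; hypotheses (i)--(ii) guarantee that $u(\cdot,t)$ and its derivatives up to order $3$ lie in $\cG$ with bounds uniform on $[0,T]$. The global error $\left|\Exp[g(x^k)]-\Exp[g(\hx(ks))]\right|$ then telescopes into at most $\lfloor\rho T\rfloor$ local one-step discrepancies, each controlled by the moment estimates above at order $O(s^2)$, so the total is $O(\rho T\cdot s^2)=O(s)=O(1/\rho)$ uniformly for $k\le\lfloor\rho T\rfloor$. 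This is exactly the order $1$ bound of Definition \ref{def:weak approximation}, matching \cite[Corollary 10]{Li2017icml}.

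The main obstacle is the first step: establishing the closed recursion \eqref{eq:reduced-recursion} with rigorous, \emph{uniform-in-$k$} control of every $O(s)$ and $O(s^2)$ remainder. Because $z^{k+1}$ enters through the nonlinear map $\nabla h_\mu$ and is defined only implicitly, I must show that $z^{k+1}=Ax^{k+1}+O(s)$ holds with a remainder of polynomial growth in $x^k$ that does not accumulate over the $\lfloor\rho T\rfloor$ iterates, and that the dual sequence $\{u^k\}$ remains bounded along the trajectory. Only with this control do the substituted error terms genuinely contribute at order $O(s^2)$ to the drift and leave the moment matching intact. This uniform slaving of the auxiliary variables $z$ and $u$, rather than the moment computation itself, is the crux of the argument.
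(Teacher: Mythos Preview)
The paper does not actually prove this theorem: its entire ``proof'' is the single sentence ``The proof of this theorem can be referred to \cite[Corollary 10]{Li2017icml}.'' In other words, the authors treat the result as a black-box consequence of the Li--Tai--E framework once the SDE \eqref{eq:sto-inclusion} has been (formally) derived in Subsection~\ref{subsec:lp-sadmm-system}. Your proposal therefore does not merely reproduce the paper's argument---it supplies one where the paper supplies none.

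That said, your plan is the right one and is consistent with what a genuine proof via \cite{Li2017icml} would require. The reduction to a one-variable Markov recursion \eqref{eq:reduced-recursion} is precisely the step the paper's informal derivation (equations \eqref{eq:sto-opt-f}--\eqref{inc-s}) is gesturing at, and you are correct that the crux is obtaining \emph{uniform-in-$k$}, polynomially-growing bounds on the $O(s)$ and $O(s^2)$ remainders arising from the implicit elimination of $z^{k+1}$ and $u^k$. Your observation that $\|\nabla h_\mu\|\le L_h$ keeps $\{u^k\}$ bounded is the key ingredient for this. One minor slip: for order~$1$ approximation, Definition~\ref{def:weak approximation} requires test functions $g\in\cG^{\alpha+1}=\cG^{2}$, not $\cG^{3}$; this only makes the needed Kolmogorov regularity milder. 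Also note that because the diffusion coefficient $\lambda^{-1}\rho^{-1/2}=\lambda^{-1}s^{1/2}$ itself scales with $s$, the second-moment matching is automatic at $O(s^2)$ on both sides, as you observed---this is a somewhat degenerate (small-noise) instance of the Li--Tai--E setup, which is why the paper feels comfortable deferring entirely to the cited corollary.
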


The Lipschitz continuity of $\nabla H_{\mu}$ implies that the  linear growth condition  holds: $\|\nabla H_{\mu}(x)\|\leq C(1+\|x\|)$ for some constant $C>0$ and any $x\in \R^n$. Hence according to \cite[Theorem 5.2.1]{Oksendal2003}, Theorem \ref{th:weak approximation} (i) ensures that \eqref{eq:sto-inclusion} admits a unique solution.

Let the stochastic process $\hx$ be a trajectory of \eqref{eq:sto-inclusion}.  We claim that for any twice continuously differentiable function $\phi: \R^n\to \R$,  the following chain rule  holds in the stochastic setting:
\be\label{eq:sto-chain rule for H}
\frac{d\phi(\hx(t))}{dt}=\langle \nabla \phi(\hx(t)), \dot{\hx}(t)\rangle.
\ee
To elaborate this claim, let us  recall It{\^o}'s  formula \eqref{eq:ito-lemma}. It follows that 
\be\label{eq:sto-complex chain rule for H}
\frac{d\phi(\hx(t))}{dt}=\langle \nabla \phi(\hx(t)), \dot{\hx}(t)\rangle+O(1/\rho).
\ee
Then for sufficiently large $\rho$, there must exist  a stochastic process $\tilde{\hx}$, which is very close to $\hx$, such that 
\be\label{eq:sto-chain rule for tilde}
\frac{d\phi(\tilde{\hx}(t))}{dt}=\langle \nabla \phi(\tilde{\hx}(t)), \dot{\tilde{\hx}}(t)\rangle.
\ee
Note that the trajectory $\hx$, which is  an order $1$ weak approximation of $x^k$ generated by Algorithm \ref{alg:sto-Modified ADMM} under the conditions of Theorem \ref{th:weak approximation}, is established by taking the limit  of \eqref{inc-s} as $s\to 0$, where $s=1/\rho$. Then, we can say that \eqref{eq:sto-chain rule for tilde}  is  satisfied by $\hx$ when $\rho$ is sufficiently large,  rather than by  $\tilde{\hx}$. 
We will utilize the chain rule \eqref{eq:sto-chain rule for H} to analyze properties of $H_{\mu}(\hx)$ 
in the remainder of this section, without assuming the twice differentiability of $H_{\mu}$.

\subsection{Convergence analysis}
In this subsection, we first demonstrate that the trajectory $\hx$ of  \eqref{eq:sto-inclusion} converges almost surely  to the critical point of the function $H_\mu$, under the assumptions that the   trajectory $\hx$ is bounded and $H_\mu$ is a KL function with \L{}ojasiewicz exponent. Next, with the aid of the relationship between the critical points of $H$ and $H_{\mu}$ in Lemma \ref{lem:critical} (with proof referred to  \cite[Subsection 3.2]{Axel2021}), we prove that  the function $\bar\hx$, defined by 
\be\label{bar-x}
\bar{\hx}(t):=\hx(t)-A^T(AA^T)^{-1}(A\hx(t)-\prox_{\mu h}(A\hx(t))),
\ee
converges to an approximate critical point of the objective function $H$ when  $\mu$ is sufficiently small. Here, the invertibility of $AA^T$ is assured by Assumption \ref{ass:det}(iii).

\begin{lemma}\label{lem:critical}
	Suppose that  Assumption \ref{ass:acc-setting} holds. For any given $x\in\crit H_{\mu}$, let 
	\be\label{def:critical-H}
	\bar{x}:=x-A^T(AA^T)^{-1}(Ax-\prox_{\mu h}(Ax)).
	\ee
	 Then it holds that
	\[\dist(0,\partial H(\bar{x}))\leq \frac{L_fL_h\mu}{\sqrt{\ewmin(AA^T)}},\]
	where  $\ewmin(AA^T)$ denotes the smallest eigenvalue of $AA^T$. 	
\end{lemma}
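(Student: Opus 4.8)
The plan is to exhibit a single explicit element of $\partial H(\bar{x})$ whose norm already matches the stated bound; since $\dist(0,\partial H(\bar{x}))$ is no larger than the norm of any element, this suffices. The guiding observation is that the formula defining $\bar{x}$ is engineered so that applying $A$ collapses the correction term: because $AA^T(AA^T)^{-1}=I$, one computes directly that
\[
A\bar{x} = Ax - \big(Ax - \prox_{\mu h}(Ax)\big) = \prox_{\mu h}(Ax).
\]
This identity is the linchpin of the whole argument, since it transfers information about the Moreau envelope evaluated at $Ax$ into a genuine subgradient of $h$ at the shifted point $A\bar{x}$.

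First I would record the two standard facts about the Moreau envelope of the $\varrho$-weakly convex, $L_h$-Lipschitz function $h$ (valid since $\mu<1/\varrho$): the gradient identity $\nabla h_\mu(y)=\mu^{-1}(y-\prox_{\mu h}(y))$, and the subgradient membership $\nabla h_\mu(y)\in\partial h(\prox_{\mu h}(y))$ coming from the optimality condition of the proximal subproblem. Combined with $A\bar{x}=\prox_{\mu h}(Ax)$, these give $\nabla h_\mu(Ax)\in\partial h(A\bar{x})$. Because $A$ is surjective, the chain rule \eqref{eq:chain-h} yields $A^T\partial h(A\bar{x})=\partial(h\circ A)(\bar{x})$, and the sum rule \eqref{eq:calculus} (using differentiability of $f$) yields $\partial H(\bar{x})=\nabla f(\bar{x})+\partial(h\circ A)(\bar{x})$. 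Hence the candidate
\[
v := \nabla f(\bar{x}) + A^T\nabla h_\mu(Ax)
\]
is a legitimate element of $\partial H(\bar{x})$.

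Next I would invoke the hypothesis $x\in\crit H_\mu$. Since $H_\mu$ is differentiable, this reads $\nabla f(x)+A^T\nabla h_\mu(Ax)=0$, i.e. $A^T\nabla h_\mu(Ax)=-\nabla f(x)$; substituting into $v$ telescopes the two gradient contributions into $v=\nabla f(\bar{x})-\nabla f(x)$, so $L_f$-smoothness gives $\|v\|\le L_f\|\bar{x}-x\|$. It then remains to bound $\|\bar{x}-x\|=\|A^T(AA^T)^{-1}(Ax-\prox_{\mu h}(Ax))\|$. Here I would use $\|Ax-\prox_{\mu h}(Ax)\|=\mu\|\nabla h_\mu(Ax)\|\le\mu L_h$ (the $L_h$-Lipschitz continuity of $h$ bounds its subgradients, hence $\|\nabla h_\mu\|$, by $L_h$), together with the operator-norm estimate $\|A^T(AA^T)^{-1}\|=1/\sqrt{\ewmin(AA^T)}$; the latter follows from a singular value decomposition, since $A^T(AA^T)^{-1}$ is the pseudoinverse of the surjective $A$, whose largest singular value is $1/\sigma_{\min}(A)$ with $\sigma_{\min}(A)^2=\ewmin(AA^T)$. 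Chaining the estimates produces exactly $\dist(0,\partial H(\bar{x}))\le\|v\|\le L_fL_h\mu/\sqrt{\ewmin(AA^T)}$.

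The computation is mostly bookkeeping once the structure is in view; the one genuinely essential idea, and the step I expect to be the crux, is the construction of the candidate subgradient $v$ — realizing that the Moreau-envelope gradient $\nabla h_\mu(Ax)$ computed at the critical point $x$ of $H_\mu$ is simultaneously the correct ordinary subgradient of $h$ at the shifted point $A\bar{x}$, thanks to the collapsing identity $A\bar{x}=\prox_{\mu h}(Ax)$. Everything else (the telescoping via the critical-point condition, the Lipschitz bound, and the pseudoinverse norm) is routine.
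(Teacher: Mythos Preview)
Your argument is correct. Note that the paper does not actually supply its own proof of this lemma; it explicitly defers to \cite[Subsection 3.2]{Axel2021}. Your proposal reconstructs what is presumably the standard argument from that reference: the key identity $A\bar{x}=\prox_{\mu h}(Ax)$, the subgradient membership $\nabla h_\mu(Ax)\in\partial h(A\bar{x})$, the telescoping via the criticality condition $\nabla f(x)+A^T\nabla h_\mu(Ax)=0$, and the pseudoinverse norm bound are all correctly assembled.
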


Given $\epsilon>0$, we call  a point $x\in \mathbb R^n$  an $\epsilon$-approximate critical point of $H$, if $\dist(0,\partial H(x))\le \epsilon. $ The set of all $\epsilon$-approximate critical point of $H$ is denoted by $\crit_\epsilon H.$  
Motivated by Lemma \ref{lem:critical}, to approach an $\epsilon$-approximate critical point of $H$, it is  sufficient to get close to a critical point of $H_\mu$. More specifically, if we can prove the convergence of trajectory $\hx(t)$ of \eqref{eq:sto-inclusion} towards a critical point of $H_{\mu}$ under certain conditions,  then by applying Lemma \ref{lem:critical} we can show  that 
$\bar\hx(t)$, as defined through \eqref{bar-x}, 
is convergent to an $\epsilon$-approximate  critical point of $H$, provided that   $\mu \le {\epsilon\sqrt{\lambda_{min}(AA^T)}}/{(L_fL_h)}$.  Thus, our goal in the remainder of this  subsection is to show the convergence of $\hx(t)$ to some critical point  of $H_{\mu}$. 

Before proceeding, we first present a descent property of $\Exp[H_{\mu}(\hx)]$   based on stochastic system \eqref{eq:sto-inclusion}.

\begin{lemma}\label{lem:sto-descent}
Suppose that Assumptions \ref{ass:acc-setting} and \ref{ass:first-sto} hold. Let  $\hx$ be a bounded trajectory of   \eqref{eq:sto-inclusion}. For  any  $t_2\ge t_1\geq 0$,  it holds that 
	\be\label{eq:inter-obj}
	\Exp[H_{\mu}(\hx(t_2))]+\lambda\Exp\left[\int_{t_1}^{t_2}{\|\dot{\hx}(t)\|^2}dt\right]=\Exp[H_{\mu}(\hx(t_1))].
	\ee
\end{lemma}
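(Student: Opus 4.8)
The plan is to combine the stochastic chain rule \eqref{eq:sto-chain rule for H} with the stochastic differential equation \eqref{eq:sto-inclusion}, integrate over $[t_1,t_2]$, and then take expectations, exploiting the mean-zero property of the It{\^o} integral recorded in Subsection \ref{sec:brownian}. First I would apply \eqref{eq:sto-chain rule for H} with $\phi=H_\mu$ to obtain, for a.e. $t\ge 0$, the identity $\frac{d}{dt}H_\mu(\hx(t))=\langle \nabla H_\mu(\hx(t)),\dot{\hx}(t)\rangle$. The crucial algebraic step is to eliminate $\nabla H_\mu$ rather than $\dot{\hx}$: rearranging \eqref{eq:sto-inclusion} gives $\nabla H_\mu(\hx(t))=-\lambda\dot{\hx}(t)-\rho^{-1/2}\dot{W}(t)$, and substituting this into the chain rule yields
\[
\frac{d}{dt}H_\mu(\hx(t))=-\lambda\|\dot{\hx}(t)\|^2-\rho^{-1/2}\langle \dot{\hx}(t),\dot{W}(t)\rangle.
\]
Keeping $\dot{\hx}$ intact inside the quadratic term (instead of expanding it back into $\nabla H_\mu$ and $\dot{W}$) is exactly what produces the coefficient $\lambda$ demanded by \eqref{eq:inter-obj}.

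Next I would integrate this identity over $[t_1,t_2]$, writing $\langle \dot{\hx}(t),\dot{W}(t)\rangle\,dt=\langle \dot{\hx}(t),dW(t)\rangle$, to arrive at
\[
H_\mu(\hx(t_2))-H_\mu(\hx(t_1))=-\lambda\int_{t_1}^{t_2}\|\dot{\hx}(t)\|^2\,dt-\rho^{-1/2}\int_{t_1}^{t_2}\langle \dot{\hx}(t),dW(t)\rangle.
\]
Upon taking expectations, the whole lemma reduces to establishing that the It{\^o} integral on the right is mean-zero, i.e. $\Exp\big[\int_{t_1}^{t_2}\langle \dot{\hx}(t),dW(t)\rangle\big]=0$. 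By the property $\Exp[\int_0^T G\,dW]=0$ stated in Subsection \ref{sec:brownian}, it suffices to check that the integrand is progressively measurable and lies in $\mathbb{L}_e^2(t_1,t_2)$. This is precisely where the boundedness hypothesis on $\hx$ is used: since $H_\mu$ is $L$-smooth by Remark \ref{re:moreau}(ii), $\nabla H_\mu$ is Lipschitz, so $\nabla H_\mu(\hx(t))$ remains bounded along the bounded trajectory, and consequently $\dot{\hx}(t)=-\lambda^{-1}(\nabla H_\mu(\hx(t))+\rho^{-1/2}\dot{W}(t))$ has the square-integrability needed to qualify as an admissible It{\^o} integrand. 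Rearranging the resulting identity then gives \eqref{eq:inter-obj}.

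The main obstacle is the rigorous justification of this mean-zero claim, not the routine substitution and integration. Formally $\dot{\hx}$ carries the white-noise contribution $\dot{W}(t)$ at the same instant $t$, so a naive expansion of $\int_{t_1}^{t_2}\langle \dot{\hx},dW\rangle$ would surface the quadratic-variation term $\int_{t_1}^{t_2}\langle \dot{W},dW\rangle$ that the simplified chain rule \eqref{eq:sto-chain rule for H} has already discarded. I would handle this within the weak-approximation framework set up in \eqref{eq:sto-complex chain rule for H}--\eqref{eq:sto-chain rule for tilde}: under \eqref{eq:sto-chain rule for H} the process $H_\mu(\hx(t))+\lambda\int_0^t\|\dot{\hx}(s)\|^2\,ds$ differs from $H_\mu(\hx(0))$ only by the martingale $-\rho^{-1/2}\int_0^t\langle \dot{\hx}(s),dW(s)\rangle$ (the remaining It{\^o} correction being the negligible $O(1/\rho)$ term), so its expectation is constant in $t$, which is exactly \eqref{eq:inter-obj}. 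Isolating and defending this step is where the care must be spent.
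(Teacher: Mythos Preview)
Your proposal is correct and follows essentially the same approach as the paper: apply the simplified chain rule \eqref{eq:sto-chain rule for H} to $H_\mu$, substitute $\nabla H_\mu(\hx(t))=-\lambda\dot{\hx}(t)-\rho^{-1/2}\dot{W}(t)$ from \eqref{eq:sto-inclusion}, integrate over $[t_1,t_2]$, and take expectations using the mean-zero property of the It{\^o} integral together with $\dot{\hx}\in\mathbb L_e^2$ from the boundedness of the trajectory. The paper's proof is slightly terser in justifying $\dot{\hx}\in\mathbb L_e^2(0,t)$ (it simply invokes boundedness of $\hx$), but the structure and the key steps are identical to yours.
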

\begin{proof}
By applying \eqref{eq:sto-chain rule for H},  the following relation holds for $t\geq 0$:
\be\label{eq:sto-chain-rule-application}
\begin{aligned}
	\frac{d}{dt}H_{\mu}(\hx(t))&=\langle \nabla H_{\mu}(\hx(t)), \dot{\hx}(t)\rangle 
	= \langle -\lambda \dot{\hx}(t)-\rho^{-1/2}\dot{W}(t), \dot{\hx}(t)\rangle\\
 &= -\lambda\|\dot{\hx}(t)\|^2-\rho^{-1/2}\langle\dot{W}(t), \dot{\hx}(t)\rangle,
\end{aligned}
\ee	
where the second equality is from \eqref{eq:sto-inclusion}. 
Integrating  \eqref{eq:sto-chain-rule-application} from $t_1$ to $t_2$ yields
\be\label{eq:par1}
H_{\mu}(\hx(t_2))-H_{\mu}(\hx(t_1))=-\lambda \int_{t_1}^{t_2}{\|\dot{\hx}(t)\|^2}dt-\rho^{-1/2}\int_{t_1}^{t_2}{\langle\dot{W}(t), \dot{\hx}(t)\rangle}dt.
\ee
As the trajectory $\hx$  is bounded,  $\dot{\hx}\in \mathbb L_e^2(0, t)$ for any $t> 0$.  From Subsection \ref{sec:brownian}, we know that $\int_{0}^{t}{\langle\dot{W}(s), \dot{\hx}(s)\rangle}ds$ is an It{\^o}'s integral of $\dot{\hx}$ on $[0,t]$. 
It thus indicates 
\[
\Exp\left[\int_{0}^{t}{\langle\dot{W}(s), \dot{\hx}(s)\rangle}ds\right]=0 \text{ for } t> 0.
\]
By taking expectation on both sides of \eqref{eq:par1}  and  using the above equality, we obtain \eqref{eq:inter-obj}. The proof is completed.
\end{proof}
The trajectory $\hx$ of \eqref{eq:sto-inclusion} is a stochastic process defined on the probability space $(\Omega,\cF,\Prob)$. 
We now define the set consisting  of  all cluster points of the stochastic process $\hx(t)$ as follows: \[\cC:=\{x_\infty: \exists \text{ an increasing subsequence } \{t_k\} \text{ such that } \hx(t_k)\to x_\infty \text{  a.s.} \text{  as } k\to+\infty\}.\]    
Based on Lemma \ref{lem:sto-descent},  we obtain the  following theorem.

\begin{theorem}\label{th:sto-weak-convergence}
Suppose that Assumptions  \ref{ass:acc-setting} and \ref{ass:first-sto} hold and let $\hx$ be a bounded trajectory generated by \eqref{eq:sto-inclusion}. Then  we have:
\begin{itemize}
	\item[(i)] $\int_{0}^{\infty}{\| \dot{\hx}(t)\|^2}dt<+\infty$ a.s.;
 \item[(ii)] the set $\cC$ is nonempty, almost surely compact and \[\lim_{t\to\infty}\dist((\hx(t)),\cC)= 0\,;\]
\item[(iii)] $H_{\mu}$ has constant expectation over $\cC$;
\item[(iv)] $\cC\subseteq\crit H_{\mu}$ holds almost surely.	
 \end{itemize}
\end{theorem}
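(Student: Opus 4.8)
The plan is to mirror the deterministic argument of Theorem~\ref{th:weak-convergence}, replacing the descent inequality of Lemma~\ref{lem:descent} by the stochastic descent identity \eqref{eq:inter-obj} of Lemma~\ref{lem:sto-descent}, and upgrading the conclusions to hold almost surely by working path-by-path on the probability-one event where $\hx$ stays bounded. For (i), I would set $t_1=0$ in \eqref{eq:inter-obj} and let $t_2\to\infty$. Since $\hx$ is bounded and $H_\mu$ is continuous and bounded below (Remark~\ref{re:moreau}), the map $t\mapsto\Exp[H_\mu(\hx(t))]$ is nonincreasing by \eqref{eq:inter-obj} and bounded below, hence converges to some finite $\bar H$. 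Passing to the limit in \eqref{eq:inter-obj} and invoking monotone convergence gives $\lambda\,\Exp[\int_0^\infty\|\dot{\hx}(t)\|^2dt]=\Exp[H_\mu(\hx(0))]-\bar H<+\infty$, so the nonnegative random variable $\int_0^\infty\|\dot{\hx}(t)\|^2dt$ has finite expectation and is therefore finite almost surely, which is (i).

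For (ii), I would fix the probability-one event on which $\hx(\cdot)$ is bounded and continuous; on this event the range of $\hx$ is precompact, so Bolzano--Weierstrass yields cluster points and $\cC$ is nonempty, boundedness of $\cC$ is immediate, closedness follows verbatim from the argument of \cite[Lemma 3.3(vii)]{Radu2020}, and $\lim_{t\to\infty}\dist(\hx(t),\cC)=0$ is the standard consequence of the definition of the cluster set (otherwise some subsequence stays bounded away from $\cC$ yet, by precompactness, admits a further limit in $\cC$). For (iii), I would use $\Exp[H_\mu(\hx(t))]\to\bar H$ from the proof of (i). Given $x_\infty\in\cC$ with $\hx(t_k)\to x_\infty$ a.s.\ along a deterministic sequence, continuity of $H_\mu$ gives $H_\mu(\hx(t_k))\to H_\mu(x_\infty)$ a.s.; since $\hx$ is bounded these values are dominated by a constant, so dominated convergence yields $\Exp[H_\mu(\hx(t_k))]\to H_\mu(x_\infty)$, and comparison with $\Exp[H_\mu(\hx(t_k))]\to\bar H$ forces $H_\mu(x_\infty)=\bar H$ for every $x_\infty\in\cC$, which is (iii).

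The crux is (iv), the inclusion $\cC\subseteq\crit H_\mu$ almost surely, where $\dist(0,\partial H_\mu(x))=\|\nabla H_\mu(x)\|$ by smoothness of $H_\mu$. From (i) the tail integrals $\int_{t}^{t+1}\|\dot{\hx}\|^2ds\to0$ a.s., so for a cluster point $x_\infty$ with $\hx(t_k)\to x_\infty$ I can select by the mean-value theorem times $t_k'\in[t_k,t_k+1]$ with $\|\dot{\hx}(t_k')\|\to0$ a.s., while Cauchy--Schwarz bounds $\|\hx(t_k')-\hx(t_k)\|\le(\int_{t_k}^{t_k+1}\|\dot{\hx}\|^2ds)^{1/2}\to0$, giving $\hx(t_k')\to x_\infty$ as well. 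The natural relation is the integrated form of \eqref{eq:sto-inclusion}, namely $\int_{t}^{t'}\nabla H_\mu(\hx(s))ds=-\lambda(\hx(t')-\hx(t))-\rho^{-1/2}(W(t')-W(t))$. Here lies the main obstacle: because $\dot{W}$ is not defined pointwise, one cannot simply read off $\nabla H_\mu(\hx(t_k'))\to0$ from $\dot{\hx}(t_k')\to0$, and the Gaussian increment $\rho^{-1/2}(W(t_k'+\delta)-W(t_k'))$ does not vanish along a single sequence. I would overcome this by averaging over infinitely many \emph{disjoint} short intervals $[t_{k_j},t_{k_j}+\delta]$ with $\hx(t_{k_j})\to x_\infty$: the Ces\`aro mean of the left-hand integrals converges to $\nabla H_\mu(x_\infty)$ by continuity, the displacement contributions vanish, and the independent mean-zero increments average to zero almost surely by the strong law of large numbers, forcing $\nabla H_\mu(x_\infty)=0$. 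An alternative, consistent with the weak-approximation viewpoint, is to invoke the effective chain rule \eqref{eq:sto-chain rule for H}, under which the diffusion contributes only at order $O(\rho^{-1})$ and the cluster-point argument reduces to the deterministic one of Theorem~\ref{th:weak-convergence}(iii).
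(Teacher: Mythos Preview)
Your treatment of (i)--(iii) matches the paper's essentially line for line; the only cosmetic slip is in (iii), where dominated convergence gives $\Exp[H_\mu(\hx(t_k))]\to\Exp[H_\mu(x_\infty)]$ rather than $H_\mu(x_\infty)$, since $x_\infty$ may be random.

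For (iv) the paper takes a more direct route than your SLLN averaging. Instead of separating the drift and the Brownian increment and handling them by different tools, the paper keeps them together: from \eqref{eq:sto-inclusion} one has $\nabla H_\mu(\hx(t))=-(\lambda\dot{\hx}(t)+\rho^{-1/2}\dot{W}(t))$, and a second pass through the chain rule \eqref{eq:sto-chain rule for H}, rewriting $\langle\nabla H_\mu(\hx),\dot{\hx}\rangle$ as $-\lambda^{-1}\|\lambda\dot{\hx}+\rho^{-1/2}\dot{W}\|^2$ plus an It\^o integral with zero mean, yields after taking expectations
\[
\int_0^t \Exp\bigl[\|\nabla H_\mu(\hx(s))\|^2\bigr]\,ds=\lambda\bigl(H_\mu(\hx(0))-\Exp[H_\mu(\hx(t))]\bigr).
\]
Letting $t\to\infty$ gives $\int_0^\infty\|\nabla H_\mu(\hx(s))\|^2\,ds<\infty$ a.s., and then the same subsequence lemma you cite forces $\nabla H_\mu(\hx(t_k))\to0$ a.s.\ directly, with no need to isolate times where $\dot{\hx}$ is small or to average Brownian increments. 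Your Ces\`aro/SLLN argument is a correct alternative at the paper's level of rigor (deterministic times, disjoint intervals, displacements controlled via (i)), but it does more work than necessary; the paper's trick exploits that the \emph{sum} $\lambda\dot{\hx}+\rho^{-1/2}\dot{W}$ is precisely $-\nabla H_\mu(\hx)$, so its $L^2$-in-time control comes from the same energy identity that already gave (i). Your ``alternative'' of invoking \eqref{eq:sto-chain rule for H} is in fact the paper's starting point, but it must be carried through as above rather than collapsed to the deterministic case.
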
 
\begin{proof}
As presented in Remark \ref{re:moreau}(ii),   $\Exp[H_{\mu}(\hx(t))]$ is lower bounded under Assumption \ref{ass:acc-setting}. Moreover, it follows from Lemma \ref{lem:sto-descent} that   $\Exp[H_{\mu}(\hx(t))]$ is nonincreasing. Thus there exists a finite value $\bar{H}_{\mu}$ such that  
\be\label{eq:sto-conve-obj}
\lim_{t\to\infty}\Exp[H_{\mu}(\hx(t))]=\bar{H}_{\mu}.
\ee
Since $\dot{\hx}\in \mathbb L_e^2(t_1,t_2)$  for any  $t_2> t_1\ge 0$, we can interchange the order of expectation and integration in  \eqref{eq:inter-obj}, deriving
\[
\lambda\int_{t_1}^{t_2}{\Exp\left[\|\dot{\hx}(t)\|^2\right]}dt=\Exp[H_{\mu}(\hx(t_1))]-\Exp[H_{\mu}(\hx(t_2))].
\]
Taking $t_1=0$ and $t_2\to +\infty$  and using  \eqref{eq:sto-conve-obj}, we have 
\be\label{eq:sto-finite-2}
\int_{0}^{\infty}{\Exp\left[\| \dot{\hx}(t)\|^2\right]}dt<+\infty,\ee
which implies  item (i).  

If $\hx$ is bounded, by the definition of $\cC$ and the similar approach to prove Theorem \ref{th:weak-convergence}(iv), we obtain item (ii). 
For any $x_{\infty}\in\cC$, by the definition of a cluster point, there exists an increasing sequence $\{t_k\}$ such that $\hx(t_k)\to x_{\infty}$ almost surely.  According to the continuity of $H_{\mu}$, it follows that $ H_{\mu}(\hx(t_k)) \to H_{\mu}(x_{\infty})$ almost surely as $t_k\to +\infty$. Furthermore, by the boundedness of $\hx$ and Lebesgue's dominated convergence theorem, we have
$
\Exp[H_{\mu}(\hx(t_k))]\to \Exp[H_{\mu}(x_{\infty})],
$
which, together with \eqref{eq:sto-conve-obj},  yields that  $\Exp[H_{\mu}(x_{\infty})]=\bar{H}_{\mu}$ for any $x_{\infty}\in\cC$. Hence, $H_{\mu}(\hx(t))$ has constant expectation over $\cC$ which derives 
item (iii).

We next show item (iv), i.e., $x_{\infty}\in\crit H_{\mu}$ almost surely  for any $x_{\infty}\in\cC$. Returning to \eqref{eq:sto-chain-rule-application}, we have
\bee
\begin{aligned}
	\frac{d}{dt}H_{\mu}(\hx(t))&=  \langle \nabla H_{\mu}(\hx(t)), \dot{\hx}(t)\rangle  =  \langle -\lambda \dot{\hx}(t)-\rho^{-1/2}\dot{W}(t), \dot{\hx}(t)\rangle\\
	&= 
	\frac{1}{\lambda}\langle -\lambda \dot{\hx}(t)-\rho^{-1/2}\dot{W}(t), \lambda\dot{\hx}(t)+\rho^{-1/2}\dot{W}(t)\rangle\\
 &\quad\quad+\frac{1}{\lambda}\langle \lambda \dot{\hx}(t)+\rho^{-1/2}\dot{W}(t), \rho^{-1/2}\dot{W}(t)\rangle\\
	&=  -\frac{1}{\lambda}\|\lambda \dot{\hx}(t)+\rho^{-1/2}\dot{W}(t)\|^2+\frac{1}{\lambda}\langle \lambda \dot{\hx}(t)+\rho^{-1/2}\dot{W}(t), \rho^{-1/2}\dot{W}(t)\rangle.
\end{aligned}
\eee
By integrating the above inequality over $[0,t]$, we obtain
\be\label{eq:first-sto-weak1}
\begin{aligned}
&\int_{0}^{t}{\|\lambda \dot{\hx}(s)+\rho^{-1/2}\dot{W}(s)\|^2}ds\\
&=\lambda(H_{\mu}(\hx(0))-H_{\mu}(\hx(t)))+ \int_{0}^{t}{\langle \lambda\dot{\hx}(s)+\rho^{-1/2}\dot{W}(s),\rho^{-1/2} \dot{W}(s)\rangle} ds. 
\end{aligned}
\ee
The boundedness of $\hx$ and the Lipschitz continuity of $\nabla H_{\mu}$  suggest that $\nabla H_{\mu}(\hx)$ is bounded,  and from  \eqref{eq:sto-inclusion},  $\lambda    \dot{\hx}(t)+\rho^{-1/2}\dot{W}(t)$ is bounded  for any  $t> 0$. Hence, $\lambda    \dot{\hx}+\rho^{-1/2}\dot{W}\in \mathbb L_e^2(0, t)$ for any $t>0$. Then according to Subsection \ref{sec:brownian},  $\int_{0}^{t}{\langle \lambda\dot{\hx}(s)+\rho^{-1/2}\dot{W}(s), \dot{W}(s)\rangle} ds$ is an It{\^o}'s integral 
satisfying
\[
\Exp\left[\int_{0}^{t}{\langle \lambda\dot{\hx}(s)+\rho^{-1/2}\dot{W}(s), \dot{W}(s)\rangle} ds\right]=0
\] 
which implies from  \eqref{eq:first-sto-weak1} that 
\bee
\Exp\left[\int_{0}^{t}{\|\lambda \dot{\hx}(s)+\rho^{-1/2}\dot{W}(s)\|^2}ds\right]=\lambda(H_{\mu}(\hx(0))-\Exp[H_{\mu}(\hx(t))])<+\infty.
\eee
Since $\|\lambda \dot{\hx}(s)+\rho^{-1/2}\dot{W}(s)\|^2$ is nonnegative and integrable, we can interchange the order of integration and expectation, obtaining
\be\label{eq:sto-part1}
 \int_{0}^{t}{\Exp\left[\|\lambda \dot{\hx}(s)+\rho^{-1/2}\dot{W}(s)\|^2\right]}ds=\lambda(H_{\mu}(\hx(0))-\Exp[H_{\mu}(\hx(t))]).
\ee
Taking $t\to \infty$ and using  \eqref{eq:sto-conve-obj}, it follows that  
\[
\int_{0}^{\infty}{\Exp\left[\|\lambda \dot{\hx}(s)+\rho^{-1/2}\dot{W}(s)\|^2\right]}ds<\infty,
\]  
which indicates that 
\[\int_{0}^{\infty}{\|\lambda \dot{\hx}(s)+\rho^{-1/2}\dot{W}(s)\|^2}ds<\infty \text{ a.s.}
\] 
By the same line as to prove Theorem \ref{th:weak-convergence}(iii), we attain 
\be\label{eq:sto-gradient-conv}
\|\lambda \dot{\hx}(t_k)+\rho^{-1/2}\dot{W}(t_k)\|\to 0  \text{ a.s.}
\ee
Combining $\hx(t_k)\to x_{\infty}$ almost surely,  $\lambda \dot{\hx}(t_k)+\rho^{-1/2}\dot{W}(t_k)=-\nabla H_{\mu}(\hx(t_k))$ and \eqref{eq:sto-gradient-conv} together, and using the continuity  of $\nabla H_{\mu}$, we obtain  $\dist(0,\nabla H_{\mu}(x_{\infty}))= 0$ almost surely.  Hence, item (iv) is proved.
\end{proof}
Using the KL property and the relationship between $\crit H$ and $\crit H_{\mu}$, we next demonstrate that   $\bar{\hx}(t)$ converges almost surely to an $\epsilon$-approximate  critical point of the objective function $H$. 

\begin{theorem}\label{th:sto-global}
 Suppose that  Assumptions  \ref{ass:acc-setting} and \ref{ass:first-sto} hold,  $H_{\mu}$ is a KL function with \L{}ojasiewicz exponent $\theta$, and $\hx$ is a bounded  trajectory of \eqref{eq:sto-inclusion}.   Then, the following statements hold true: 
\begin{itemize}
	\item[(i)] $\int_{T_0}^{+\infty}{\|\dot{\hx}(t)\|}dt<+\infty$ almost surely for a time $T_0> 0$;
	\item[(ii)] $\hx(t)$ converges almost surely to some critical point of $H_\mu$;
	\item[(iii)]  $\bar{\hx}(t)$, defined through \eqref{bar-x},  converges almost surely to an $\epsilon$-approximate critical point of $H$, if the parameter $\mu$ satisfies $\mu \le {\epsilon\sqrt{\lambda_{min}(AA^T)}}/{(L_fL_h)}$. 
\end{itemize}
\end{theorem}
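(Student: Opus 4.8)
The plan is to treat part (i)—an almost sure finite-length estimate for the trajectory—as the core of the argument, and then to read off (ii) and (iii) as short consequences, paralleling the deterministic chain ``Theorem \ref{th:global-conv} $\Rightarrow$ convergence'' but now routing the stochastic term through expectations. Throughout I would work on the almost sure event supplied by Theorem \ref{th:sto-weak-convergence}, on which $\cC$ is nonempty and compact, $\dist(\hx(t),\cC)\to 0$, $\cC\subseteq\crit H_\mu$, and $\int_0^\infty\|\dot{\hx}(t)\|^2\,dt<+\infty$; I would also use that $\Exp[H_\mu(\hx(t))]\searrow\bar H_\mu$ from Lemma \ref{lem:sto-descent} and \eqref{eq:sto-conve-obj}. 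As in the deterministic proof I may assume $H_\mu(\hx(t))>\bar H_\mu$ for all $t$, the complementary case forcing $\dist(0,\nabla H_\mu(\hx))=0$ after integration of \eqref{eq:sto-chain-rule-application}.

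For part (i), I would first fix $\eta,\varepsilon>0$ and choose $T_0>0$ so that $\dist(\hx(t),\cC)<\varepsilon$ and $\bar H_\mu<H_\mu(\hx(t))<\bar H_\mu+\eta$ for all $t\ge T_0$, so that the uniformized KL property over the compact set $\cC$ (as in \cite[Lemma 6]{BST2014}) furnishes a concave $\varphi$ with $\varphi'(H_\mu(\hx(t))-\bar H_\mu)\cdot\|\nabla H_\mu(\hx(t))\|\ge 1$ for $t\ge T_0$; here $\dist(0,\partial H_\mu(\hx))=\|\nabla H_\mu(\hx)\|$ since $H_\mu$ is smooth. Applying the stochastic chain rule \eqref{eq:sto-chain rule for H} to $\varphi(H_\mu(\hx(t))-\bar H_\mu)$ and inserting \eqref{eq:sto-chain-rule-application} gives
\[
\frac{d}{dt}\varphi(H_\mu(\hx(t))-\bar H_\mu)=\varphi'(H_\mu(\hx(t))-\bar H_\mu)\left(-\lambda\|\dot{\hx}(t)\|^2-\rho^{-1/2}\langle\dot W(t),\dot{\hx}(t)\rangle\right).
\]
Using the KL bound, together with the relation $\|\nabla H_\mu(\hx)\|=\|\lambda\dot{\hx}+\rho^{-1/2}\dot W\|$ to control $\varphi'\cdot\lambda\|\dot{\hx}\|^2$ from below by $\|\dot{\hx}\|$ exactly as in \eqref{eq:dist-dot-x}--\eqref{eq:varphi-dotx} (up to the noise contribution, in the large-$\rho$ regime underlying \eqref{eq:sto-chain rule for H}), one is led to a differential inequality whose deterministic part is $-\|\dot{\hx}(t)\|$ and whose stochastic part is a multiple of $\varphi'\langle\dot W,\dot{\hx}\rangle$. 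Integrating over $[T_0,t]$ and taking expectations annihilates the stochastic part, since it is an It{\^o} integral of an $\mathbb L_e^2$-process and hence has zero mean (Subsection \ref{sec:brownian}); because $\varphi\ge 0$ this yields
\[
\Exp\left[\int_{T_0}^{t}\|\dot{\hx}(s)\|\,ds\right]\le \Exp\big[\varphi(H_\mu(\hx(T_0))-\bar H_\mu)\big]<+\infty
\]
uniformly in $t$. Letting $t\to\infty$ and invoking nonnegativity of the integrand gives $\int_{T_0}^\infty\|\dot{\hx}(t)\|\,dt<+\infty$ almost surely, which is (i).

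Parts (ii) and (iii) should then follow quickly. From (i) and Cauchy's criterion, $\hx(t_2)-\hx(t_1)=\int_{t_1}^{t_2}\dot{\hx}(t)\,dt$ has arbitrarily small norm for $t_2\ge t_1$ large, so $\hx(t)$ converges almost surely to a limit $x_\infty$; by Theorem \ref{th:sto-weak-convergence}(iv), $x_\infty\in\cC\subseteq\crit H_\mu$ almost surely, giving (ii). For (iii), I would note that the map defining $\bar{\hx}$ in \eqref{bar-x} is continuous in $\hx(t)$, because $\prox_{\mu h}$ is Lipschitz and $AA^T$ is invertible by Assumption \ref{ass:det}(iii); hence $\bar{\hx}(t)\to\bar x_\infty$ almost surely, where $\bar x_\infty$ is obtained from $x_\infty\in\crit H_\mu$ through \eqref{def:critical-H}. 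Lemma \ref{lem:critical} then gives $\dist(0,\partial H(\bar x_\infty))\le L_fL_h\mu/\sqrt{\ewmin(AA^T)}$, which is at most $\epsilon$ precisely when $\mu\le\epsilon\sqrt{\ewmin(AA^T)}/(L_fL_h)$, so $\bar{\hx}(t)$ converges almost surely to an $\epsilon$-approximate critical point of $H$.

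The main obstacle is the stochastic term in the KL differential inequality: unlike the deterministic setting, $H_\mu(\hx(t))$ is not pathwise monotone, so the argument cannot be carried out sample-path by sample-path as in Theorem \ref{th:global-conv}, and one must pass through expectations. The delicate points are threefold: (a) justifying that the uniformized KL inequality applies on the relevant random region with the common threshold $\bar H_\mu$, which rests on the compactness of $\cC$ from Theorem \ref{th:sto-weak-convergence}(ii); (b) verifying that the integrand multiplying $dW$ lies in $\mathbb L_e^2(T_0,t)$ so that its It{\^o} integral is a genuine zero-mean martingale—this uses the boundedness of $\hx$, hence of $\nabla H_\mu(\hx)$, together with the $\mathbb L_e^2$-membership of $\dot{\hx}$, and requires care because $\varphi'$ may blow up as $H_\mu(\hx)\downarrow\bar H_\mu$; and (c) transferring the expected finite-length bound back to the almost sure statement, which is where nonnegativity of $\|\dot{\hx}\|$ is essential. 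The single most subtle step is controlling $\varphi'\cdot\lambda\|\dot{\hx}\|^2$ via a KL inequality stated in terms of $\|\nabla H_\mu(\hx)\|=\|\lambda\dot{\hx}+\rho^{-1/2}\dot W\|$ rather than $\lambda\|\dot{\hx}\|$; this is precisely where the small-noise regime underpinning the chain rule \eqref{eq:sto-chain rule for H} is invoked.
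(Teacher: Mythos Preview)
Your overall architecture for parts (ii) and (iii) matches the paper's, but the core of part (i) has a genuine gap, and the paper takes a different route precisely to avoid it.

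You apply the uniformized KL inequality \emph{pathwise}: you fix $\varepsilon,\eta$, then choose a single $T_0$ so that $\dist(\hx(t),\cC)<\varepsilon$ and $\bar H_\mu<H_\mu(\hx(t))<\bar H_\mu+\eta$ for all $t\ge T_0$. But $\cC$ is a random set, $\bar H_\mu$ is the limit of $\Exp[H_\mu(\hx(t))]$ (not of $H_\mu(\hx(t))$, which is not known to converge pathwise at this stage), and any such $T_0$ would depend on the sample. Once $T_0$ is random you cannot simply ``integrate and take expectations'' to kill the It\^o term, and you cannot invoke a uniform KL constant across samples. You flag this yourself in obstacles (a)--(c), but the proposal does not actually close them; in particular the sentence ``up to the noise contribution, in the large-$\rho$ regime'' is not a valid step: $\rho$ is fixed, and the mismatch between $\lambda\|\dot{\hx}\|$ and $\|\nabla H_\mu(\hx)\|=\|\lambda\dot{\hx}+\rho^{-1/2}\dot W\|$ must be handled rigorously, not absorbed.

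The paper sidesteps all of this by never applying KL pathwise. It invokes \cite[Lemma 4.5]{DTLDS2021}, which furnishes a desingularizing function $\varphi_0(s)=\sigma_0 s^{1-\theta}$, a deterministic $T_0$, and a nondecreasing sequence $\bar H_{\mu,t}\uparrow\bar H_\mu$ such that
\[
\varphi_0'\big(\Exp[H_\mu(\hx(t))]-\bar H_{\mu,t}\big)\cdot\Exp[\|\nabla H_\mu(\hx(t))\|]\ge 1,\qquad t\ge T_0.
\]
Everything on the left is deterministic and, by Lemma \ref{lem:sto-descent}, $t\mapsto\Exp[H_\mu(\hx(t))]$ is monotone. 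The two descent identities \eqref{eq:inter-obj} and \eqref{eq:sto-part1} then give the single inequality
\[
\frac{d}{dt}\big(\Exp[H_\mu(\hx(t))]-\bar H_{\mu,t}\big)\le -\lambda^{-1}\Exp[\|\nabla H_\mu(\hx(t))\|^2]=-\lambda\,\Exp[\|\dot{\hx}(t)\|^2],
\]
and the paper combines these with an AM--GM trick (its display \eqref{eq:sto-glo1}) to obtain
\[
\sqrt{\lambda\,\Exp[\|\dot{\hx}(t)\|^2]}\le -\tfrac{1+\lambda}{2}\,\frac{d}{dt}\varphi_0\big(\Exp[H_\mu(\hx(t))]-\bar H_{\mu,t}\big).
\]
This is exactly the step where your proposal is hand-waving: the paper uses the \emph{equality} $\lambda\dot{\hx}+\rho^{-1/2}\dot W=-\nabla H_\mu(\hx)$ twice, once to rewrite the descent as $-\lambda^{-1}\Exp[\|\nabla H_\mu\|^2]$ and once to bound $\Exp[\|\nabla H_\mu\|]$ back by $-\frac{d}{dt}\varphi_0(\cdots)$, and then $2\sqrt{ab}\le a+b$ closes the loop. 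Integrating and using Cauchy--Schwarz yields $\int_{T_0}^{\infty}\Exp[\|\dot{\hx}(s)\|]\,ds<\infty$, hence the almost sure statement in (i). The It\^o term never needs to be estimated because the argument is run directly on deterministic expected quantities; there is no $\varphi'$ blow-up issue and no random $T_0$.

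In short: replace your pathwise KL step by the expectation-level KL inequality from \cite{DTLDS2021}, and replace the informal ``large-$\rho$'' control by the two-sided use of $\nabla H_\mu(\hx)=-\lambda\dot{\hx}-\rho^{-1/2}\dot W$ together with $2\sqrt{ab}\le a+b$. Your arguments for (ii) and (iii) are fine and coincide with the paper's.
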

\begin{proof}
	Since $H_{\mu} $ is a KL function with \L{}ojasiewicz exponent $\theta$, by \cite[Lemma 4.5]{DTLDS2021}, there exist  $T_0>0$ and a continuous concave function $\varphi_0(s)=\sigma_0 s^{1-\theta}$ with $\theta\in(0,1)$ such that 
	\be\label{eq:kl-sto-0}
	\varphi_0'(\Exp[H_{\mu}(\hx(t))]-\bar{H}_{\mu,t})\Exp[\|\nabla H_{\mu}(\hx)\|]\geq 1, \ \forall t \geq T_0,
	\ee
	where $\bar{H}_{\mu,t}$ is a nondecreasing sequence converging to $\bar{H}_{\mu}$. 
	Further, by \eqref{eq:sto-part1} we have 
	\be\label{eq:diff-obj1}
	\begin{aligned}
	\frac{d}{dt}(\Exp[H_{\mu}(\hx(t))]-\bar{H}_{\mu,t})&=\frac{d}{dt}(\Exp[H_{\mu}(\hx(t))]-\bar{H}_{\mu})+\frac{d}{dt}(\bar{H}_{\mu}-\bar{H}_{\mu,t})\\
 &\leq \frac{d}{dt}(\Exp[H_{\mu}(\hx(t))]-\bar{H}_{\mu})\\
	&=-\lambda^{-1}\Exp[\|\lambda \dot{\hx}(t)+\rho^{-1/2}\dot{W}(t)\|^2].
	\end{aligned}
	\ee
	Let $t_1=0, t_2=t$ in \eqref{eq:inter-obj}.  Then the following relation holds:
	\[
	\frac{d}{dt}(\Exp[H_{\mu}(\hx(t))])=-\lambda\Exp[\| \dot{\hx}(t)\|^2],
	\]
which, together with \eqref{eq:diff-obj1},  indicates
	 \be\label{eq:diff-obj}
	\frac{d}{dt}(\Exp[H_{\mu}(\hx(t))]-\bar{H}_{\mu,t})\leq -\lambda^{-1}\Exp[\|\lambda \dot{\hx}(t)+\rho^{-1/2}\dot{W}(t)\|^2]=-\lambda\Exp[\| \dot{\hx}(t)\|^2].
	\ee
	 Taking the total time derivative of $\varphi_0$, we obtain  
	\be\label{eq:sto-varphi-dotx}
	\begin{aligned}
		&\frac{d}{dt}\varphi_0(\Exp[H_{\mu}(\hx(t))]-\bar{H}_{\mu,t})= \varphi_0'(\Exp[H_{\mu}(\hx(t))]-\bar{H}_{\mu,t})\cdot	\frac{d}{dt}(\Exp[H_{\mu}(\hx(t))]-\bar{H}_{\mu,t})\\
		\leq\ &  \frac{-\lambda^{-1}\Exp[\|\lambda \dot{\hx}(t)+\rho^{-1/2}\dot{W}(t)\|^2]}{\Exp[\|\nabla H_{\mu}(\hx)\|]} 
		\leq   \frac{-\lambda^{-1}\left(\Exp[\|\lambda \dot{\hx}(t)+\rho^{-1/2}\dot{W}(t)\|]\right)^2}{\Exp[\|\nabla H_{\mu}(\hx)\|]}\\
		\leq\ & -\lambda^{-1}\Exp[\|\lambda \dot{\hx}(t)+\rho^{-1/2}\dot{W}(t)\|],
	\end{aligned}
	\ee
   where the first inequality is from \eqref{eq:kl-sto-0} and  \eqref{eq:diff-obj}, the second and third inequalities are obtained by Cauchy-Schwarz inequality and $\lambda \dot{\hx}(t)+\rho^{-1/2}\dot{W}(t)=-\nabla H_{\mu}(\hx)$, respectively.
   By inserting \eqref{eq:diff-obj} into \eqref{eq:sto-varphi-dotx}, it also yields that  
\bee
	\frac{d}{dt}\varphi_0(\Exp[H_{\mu}(\hx(t))]-\bar{H}_{\mu,t})
	\leq\frac{-\lambda\Exp[\| \dot{\hx}(t)\|^2]}{\Exp[\|\nabla H_{\mu}(\hx)\|]}.
\eee	
Rearranging the above inequality, for any $t\geq T_0$ we obtain
\be\label{eq:sto-glo1}
\begin{aligned}
\sqrt{\lambda\Exp[\| \dot{\hx}(t)\|^2]}\leq & \ \sqrt{\Exp[\|\nabla H_{\mu}(\hx)\|]\cdot-\frac{d}{dt}\varphi_0(\Exp[H_{\mu}(\hx(t))]-\bar{H}_{\mu,t})}\\
\leq & \ \frac{1}{2}\Exp[\|\nabla H_{\mu}(\hx)\|]-\frac{1}{2}\frac{d}{dt}\varphi_0(\Exp[H_{\mu}(\hx(t))]-\bar{H}_{\mu,t})\\
\leq & \ \frac{1}{2}\Exp[\|\lambda \dot{\hx}(t)+\rho^{-1/2}\dot{W}(t)\|]-\frac{1}{2}\frac{d}{dt}\varphi_0(\Exp[H_{\mu}(\hx(t))]-\bar{H}_{\mu,t})\\
\leq &\ -\frac{1+\lambda}{2}\frac{d}{dt}\varphi_0(\Exp[H_{\mu}(\hx(t))]-\bar{H}_{\mu,t}), 
\end{aligned}
\ee
where the second inequality is obtained by $2\sqrt{ab}\leq a+b$ for any $a,b\ge0$, the third and the last inequalities are  deduced from $\lambda \dot{\hx}(t)+\rho^{-1/2}\dot{W}(t)=-\nabla H_{\mu}(\hx(t))$ and  \eqref{eq:sto-varphi-dotx}, respectively. Applying Cauchy-Schwarz inequality and \eqref{eq:sto-glo1}, 
we have 
\be\label{eq:sto-varphi-dot}
\begin{aligned}
\int_{T_0}^{t}{\Exp[\|\dot{\hx}(s)\|]}ds&\leq \int_{T_0}^{t}{\sqrt{\Exp[\|\dot{\hx}(s)\|^2]}}ds\\
&\leq -\frac{1+\lambda}{2\sqrt{\lambda}}(\varphi_0(\Exp[H_{\mu}(\hx(t))]-\bar{H}_{\mu,t})-\varphi_0(\Exp[H_{\mu}(\hx(T_0))]-\bar{H}_{\mu,T_0})).
\end{aligned}
\ee
By letting $t\to +\infty$, from \eqref{eq:sto-conve-obj} and the continuity of $\varphi_0$, we obtain 
\[
\int_{T_0}^{+\infty}{\Exp[\|\dot{\hx}(s)\|]}ds<+\infty,
\]
which leads to 
\bee
\int_{T_0}^{+\infty}{\|\dot{\hx}(s)\|}ds<+\infty \text{ a.s.}
\eee  
It suggests the existence of  an event $\cA$ with $\Prob(\cA)=1$ such that for any $\omega\in\cA$, 
\bee
\int_{T_0}^{+\infty}{\|\dot{\hx}(s,\omega)\|}ds<+\infty.
\eee  
Then, by Cauchy's criterion, $\hx(t,\omega)$ is convergent for any $\omega\in\cA$. Therefore, $\hx(t)$ converges almost surely.  This, along with Theorem \ref{th:sto-weak-convergence}(iv), indicates that there exists a random vector $x_\infty$ such that $\hx(t)$ converges almost surely to $x_{\infty}$ and $x_\infty\in\crit H_{\mu}$ almost surely.   We thus derive items (i) and (ii). 

By the continuity of $\prox_{\mu h}$, it is clear  that $\bar{\hx}(t)$ converges almost surely to 
\[\bar{x}_{\infty}:=x_{\infty}-A^T(AA^T)^{-1}(Ax_{\infty}-\prox_{\mu h}(Ax_{\infty})).\] 
Since $x_{\infty}\in\crit H_{\mu}$ almost surely,  following Lemma \ref{lem:critical} and the definition of $\epsilon$-approximate critical point, we obtain  $\bar{x}_{\infty}\in\crit_\epsilon H$ almost surely, when  $\mu \le {\epsilon\sqrt{\lambda_{min}(AA^T)}}/{(L_fL_h)}$.  Thus proof of item (iii) is completed.
\end{proof}

\begin{remark} 
Referred to \cite{ABRS2010}, $H_{\mu}$ is a KL function with  \L{}ojasiewicz 
 exponent if both $f$ and $h$ are semialgebraic. 
\end{remark}
Finally, we demonstrate the convergence rates of $\bar{\hx}(t)$ in the context of \L{}ojasiewicz exponent.
\begin{theorem}
Under the conditions of Theorem \ref{th:sto-global}, let $x_{\infty}$ be the limit of $\hx(t)$ in the almost sure sense, 
\[\bar{\hx}(t)=\hx(t)-A^T(AA^T)^{-1}(A\hx(t)-\prox_{\mu h}(A\hx(t)))\]
 and 
 \[
\bar{x}_{\infty}=x_{\infty}-A^T(AA^T)^{-1}(Ax_{\infty}-\prox_{\mu h}(Ax_{\infty})).
 \]
Then the following statements hold:  
	\begin{itemize}
		\item[(i)] if $\theta\in(0,1/2]$, there exist  constants $a_1, b_1>0$ and a time $T_1>0$ such that for $t\geq T_1$,
		\[
		 \Exp[\|\bar{\hx}(t)-\bar{x}_{\infty}\|]\leq a_1\exp{(-b_1(1-\theta) t)};
		\]
		\item[(ii)] if $\theta\in(1/2,1)$, there exist  a  constant $c_1>0$ and  time $T_2>0$ such that for $t\geq T_2$,
		\[ 
		\Exp[\|\bar{\hx}(t)-\bar{x}_{\infty}\|]\leq c_1t^{\frac{1-\theta}{1-2\theta}}.
		\]	
	\end{itemize}	
\end{theorem}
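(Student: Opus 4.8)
The plan is to reduce the desired rate for $\bar{\hx}$ to the machinery already built for $\hx$, and then to transfer the deterministic KL rate argument of Theorem~\ref{th:conv-rate} to the expectation quantity $E(t):=\Exp[H_\mu(\hx(t))]-\bar H_{\mu,t}$. First I would observe that the map $\Phi(y):=y-A^T(AA^T)^{-1}(Ay-\prox_{\mu h}(Ay))$ appearing in \eqref{bar-x} is globally Lipschitz: $A$ and $A^T(AA^T)^{-1}$ are bounded linear operators, and $\prox_{\mu h}$ is Lipschitz continuous because $h$ is $\varrho$-weakly convex with $\mu<1/\varrho$, so that $h+\frac{1}{2\mu}\|\cdot\|^2$ is strongly convex and its proximal map is single-valued and Lipschitz. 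Hence there is a constant $K>0$ with $\|\Phi(y_1)-\Phi(y_2)\|\le K\|y_1-y_2\|$ for all $y_1,y_2\in\R^n$, and since $\bar{\hx}(t)=\Phi(\hx(t))$ and $\bar{x}_\infty=\Phi(x_\infty)$, this bound holds pathwise and taking expectations gives $\Exp[\|\bar{\hx}(t)-\bar{x}_\infty\|]\le K\,\Exp[\|\hx(t)-x_\infty\|]$. This reduces both claims to bounding $\Exp[\|\hx(t)-x_\infty\|]$.

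For that bound I would invoke the finite-length estimate already established inside the proof of Theorem~\ref{th:sto-global}. Since $\hx(t)-x_\infty=-\int_t^\infty\dot{\hx}(s)\,ds$ almost surely, the triangle inequality and Tonelli's theorem give $\Exp[\|\hx(t)-x_\infty\|]\le\int_t^\infty\Exp[\|\dot{\hx}(s)\|]\,ds$. Integrating \eqref{eq:sto-varphi-dot} from $t$ to $t_2$ and letting $t_2\to\infty$, so that $E(t_2)\to0$ and, by continuity, $\varphi_0(E(t_2))\to0$, yields $\int_t^\infty\Exp[\|\dot{\hx}(s)\|]\,ds\le\frac{1+\lambda}{2\sqrt\lambda}\varphi_0(E(t))$. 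With $\varphi_0(s)=\sigma_0 s^{1-\theta}$ this reads $\Exp[\|\hx(t)-x_\infty\|]\le\frac{(1+\lambda)\sigma_0}{2\sqrt\lambda}\,E(t)^{1-\theta}$, so everything now hinges on the decay rate of $E(t)$.

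To obtain that decay rate I would reproduce the ODE argument of Theorem~\ref{th:conv-rate} at the level of expectations. The expectation KL inequality \eqref{eq:kl-sto-0} gives $\Exp[\|\nabla H_\mu(\hx)\|]\ge E(t)^{\theta}/(\sigma_0(1-\theta))$, while \eqref{eq:diff-obj} gives $\Exp[\|\lambda\dot{\hx}+\rho^{-1/2}\dot W\|^2]\le-\lambda\frac{d}{dt}E(t)$. Since $\lambda\dot{\hx}+\rho^{-1/2}\dot W=-\nabla H_\mu(\hx)$ and $(\Exp[\|\nabla H_\mu(\hx)\|])^2\le\Exp[\|\nabla H_\mu(\hx)\|^2]$ by Jensen's inequality, combining these three relations produces the differential inequality $\frac{d}{dt}E(t)\le-\frac{1}{\lambda\sigma_0^2(1-\theta)^2}E(t)^{2\theta}$, which has exactly the form of \eqref{eq:kl-H}. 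Integrating it as in Theorem~\ref{th:conv-rate}, with the usual reduction of the case $\theta\in(0,1/2)$ to $\theta=1/2$ once $E(t)<1$, gives $E(t)\le a\exp(-bt)$ for $\theta\in(0,1/2]$ and $E(t)\le c\,t^{1/(1-2\theta)}$ for $\theta\in(1/2,1)$, valid on $t\ge T_1$ and $t\ge T_2$ respectively.

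Finally I would substitute these rates into $\Exp[\|\bar{\hx}(t)-\bar{x}_\infty\|]\le\frac{K(1+\lambda)\sigma_0}{2\sqrt\lambda}E(t)^{1-\theta}$: raising an exponential to the power $1-\theta$ yields $a_1\exp(-b_1(1-\theta)t)$, and raising $t^{1/(1-2\theta)}$ to that power yields $c_1 t^{(1-\theta)/(1-2\theta)}$, which are precisely the two claimed bounds. The main obstacle I anticipate is bookkeeping the stochastic layer correctly: one must use the KL inequality in its expectation form with the running value $\bar H_{\mu,t}$ rather than pathwise, justify passing expectations through the It{\^o}-integral terms (already handled via $\Exp[\int\langle\dot W,\cdot\rangle\,ds]=0$ in Theorem~\ref{th:sto-global}), and apply Jensen's inequality to relate $\Exp[\|\cdot\|]$ and $\Exp[\|\cdot\|^2]$ so that the deterministic integration of the resulting differential inequality carries over verbatim.
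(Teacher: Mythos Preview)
Your proposal is correct and follows essentially the same approach as the paper: derive the differential inequality $\frac{d}{dt}E(t)\le -\frac{1}{\lambda\sigma_0^2(1-\theta)^2}E(t)^{2\theta}$ from the expectation KL inequality and the descent relation, integrate it as in Theorem~\ref{th:conv-rate} to obtain rates on $E(t)$, convert these to rates on $\Exp[\|\hx(t)-x_\infty\|]$ via the tail-length bound coming from \eqref{eq:sto-varphi-dot}, and then transfer to $\bar{\hx}$ through the Lipschitz continuity of $\prox_{\mu h}$ and the linear maps involved. The only cosmetic difference is that you present the Lipschitz reduction first and the paper does it last, and the paper makes the Lipschitz constant explicit as $1+\frac{2-\mu\varrho}{1-\mu\varrho}\mathrm{cond}(A)$.
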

\begin{proof}
From \eqref{eq:diff-obj} and $\lambda \dot{\hx}(t)+\rho^{-1/2}\dot{W}(t)=-\nabla H_{\mu}(\hx(t))$, we have for any $t\geq T_0$,
\be\label{eq:sto-rate1}
\begin{aligned}
\frac{d}{dt}(\Exp[H_{\mu}(\hx(t))]-\bar{H}_{\mu,t})&\leq -\lambda^{-1}\Exp[\|\lambda\dot{\hx}(t)+\rho^{-1/2}\dot{W}(t)\|^2]\\
&=-\lambda^{-1}\Exp[ \|\nabla H_{\mu}(\hx))\|^2]\leq  -\lambda^{-1}\left(\Exp[\|\nabla H_{\mu}(\hx))\|]\right)^2\\ 
&\leq  \frac{-\lambda^{-1}}{\sigma^2(1-\theta)^2}\left(\Exp[H_{\mu}(\hx(t))]-\bar{H}_{\mu,t}\right)^{2\theta},
\end{aligned}
\ee
where the second and  last inequalities are from Cauchy-Schwarz inequality and  \eqref{eq:kl-sto-0}, respectively. 	
By the same line as the proof of Theorem \ref{th:conv-rate}, there exist constants $b_0, b_1, b_2>0$ and $T_1,T_2>0$ such that  
\be\label{eq:sto-conv-1}
\Exp[H_{\mu}(\hx(t))-\bar{H}_{\mu,t}]\leq b_0\exp{(-b_1 t)}, \ t\geq T_1,  \ \text{ for } \theta\in(0,1/2], 
\ee
and 
\be\label{eq:sto-conv-2}
\Exp[H_{\mu}(\hx(t))-\bar{H}_{\mu,t}]\leq b_2 t^{\frac{1}{1-2\theta}}, \ t\geq T_2, \ \text{ for }  \theta\in(1/2,1).
\ee
Further using \eqref{eq:sto-varphi-dot}  and  $\varphi_0(s)=\sigma_0 s^{1-\theta}$,  we obtain  
\bee\label{eq:sto-glo2}
\begin{aligned}
\Exp[\|\hx(t_2)-\hx(t_1)\|]=& \ \Exp\left[\|\int_{t_1}^{t_2}{\dot{\hx}(t)} dt \|\right]\leq\Exp\left[\int_{t_1}^{t_2}{\|\dot{\hx}(t)\|} dt\right]=\int_{t_1}^{t_2}{\Exp[\|\dot{\hx}(t)\|]} dt\\
\leq & \  \frac{\sigma_0(1+\lambda)}{2\sqrt{\lambda}}(\Exp[H_{\mu}(\hx(t_1))]-\bar{H}_{\mu,t_1})^{1-\theta},  
\end{aligned}
\eee 
where $t_2\ge t_1\geq T_0$. 
As $t_2$ tends towards infinity, the subsequent inequality is satisfied:
\[
\Exp[\|\hx(t_1)-\hx_\infty\|]\leq \frac{\sigma_0(1+\lambda)}{2\sqrt{\lambda}}(\Exp[H_{\mu}(\hx(t_1))]-\bar{H}_{\mu,t_1})^{1-\theta}.   
\]
Combining the above inequality with \eqref{eq:sto-conv-1} and \eqref{eq:sto-conv-2}, we have 
\be\label{eq:sto-conv-rate-x1}
\Exp[\|\hx(t)-x_{\infty}\|]\leq \frac{\sigma_0(1+\lambda)b_0^{1-\theta}}{2\sqrt{\lambda}}\exp(-b_1(1-\theta)t) \text{ for } \theta\in(0,1/2]
\ee
and 
\be\label{eq:sto-conv-rate-x2}
\Exp[\|\hx(t)-x_{\infty}\|]\leq \frac{\sigma_0(1+\lambda)b_2^{1-\theta}}{2\sqrt{\lambda}} t^{\frac{1-\theta}{1-2\theta}} \text{ for } \theta\in(1/2,1).
\ee
Recall that we have proven that $\bar{\hx}(t)\to \bar{x}_{\infty}$ in  Theorem \ref{th:sto-global}(iii). Let $\cond(A)$ be the condition number of $A$. Notice that 
$A^T(AA^T)^{-1}$ is a pseudoinverse of $A$, then  by   the definition of condition number, $\|A^T(AA^T)^{-1}\|\|A\|=\cond(A)$. Then,  we derive 
\be\label{eq:relation-x-barx}
\begin{aligned}
\Exp[\|\bar{\hx}(t)-\bar{x}_{\infty}\|]&\leq \Exp[\|\hx(t)-x_{\infty}\|]+\|A^T(AA^T)^{-1}\|\Exp[\|A\hx(t)-Ax_{\infty}\|]\\&\quad\quad+\|A^T(AA^T)^{-1}\|\Exp[\|\prox_{\mu h}(A\hx(t))-\prox_{\mu h}(Ax_{\infty})\|]\\
&\leq\left(1+\frac{2-\mu\varrho}{1-\mu\varrho}\cond(A)\right)\Exp[\|\hx(t)-x_{\infty}\|], 
\end{aligned}
\ee
where the last inequality is due to the $\frac{1}{1-\mu\varrho}$-Lipschitz continuity of $\prox_{\mu h}$\cite[Proposition 3.3]{Axel2021}. Combining \eqref{eq:relation-x-barx} with \eqref{eq:sto-conv-rate-x1} and \eqref{eq:sto-conv-rate-x2}, the convergence rates  of $\|\bar{\hx}(t)-\bar{x}_{\infty}\|$ in expectation can be derived:
\bee
\Exp[\|\bar{\hx}(t)-\bar{x}_{\infty}\|]\leq \left(1+\frac{2-\mu\varrho}{1-\mu\varrho}\cond(A)\right) \frac{\sigma_0(1+\lambda)b_0^{1-\theta}}{2\sqrt{\lambda}}\exp(-b_1(1-\theta)t) \text{ for } \theta\in(0,1/2]
\eee
and 
\bee
\Exp[\|\bar{\hx}(t)-\bar{x}_{\infty}\|]\leq \left(1+\frac{2-\mu\varrho}{1-\mu\varrho}\cond(A)\right)\frac{\sigma_0(1+\lambda)b_2^{1-\theta}}{2\sqrt{\lambda}}t^{\frac{1-\theta}{1-2\theta}} \text{ for } \theta\in(1/2,1).
\eee
We complete this proof.
\end{proof}

\section{Accelerated  LP-SADMM and convergence analysis }\label{sec:acce-ADMM}
In this section, drawing inspiration from the Nesterov's  accelerated gradient method \cite{Nesterov1983AMF}, we propose an accelerated variant of  LP-SADMM   for solving  problem \eqref{eq:finite-p} with  $h$ being $\varrho$-weakly convex, and further derive the continuous-time system   of the proposed algorithm.  We then  explore the global convergence properties of the trajectory.

\subsection{Accelerated LP-SADMM and continuous-time system}
 We present the accelerated LP-SADMM  in Algorithm \ref{alg:accelerated ADMM}, where $\Sto f(\hat x^k)$ is a stochastic approximation to $\nabla f(\hat x^k)$ and $h_\mu$ is the $\mu$-Moreau envelop of $h$.
 \begin{algorithm}
\caption{ Accelerated LP-SADMM}\label{alg:accelerated ADMM}
\begin{algorithmic}[1]
\Require Initial point $(x^0,z^0,u^0) \in \R^n\times\R^m\times\R^m$,  parameters $\eta,\rho>0$, $\tau>\rho\|A^TA\|+1/\eta$ and $0<\mu<1/\varrho$. Let $\hat{x}^0=x^0,\hat{z}^0=z^0,\hat{u}^0=u^0$.
\For{ $k=0,1,2,\ldots$ } 
\State Update $x^{k},z^{k},u^{k}$ as follows:
\begin{subnumcases}{\label{eq:iter-acce}}
			x^{k+1}=\hat{x}^k-\frac{1}{\tau}\left(\Sto f(\hat{x}^k)+\rho A^T(A\hat{x}^k-\hat{z}^k+\frac{\hat{u}^k}{\rho})\right),\label{eq:iter1-acce}\\
			z^{k+1}=\argmin_{z\in\R^m}\left\lbrace h_{\mu}(z)+\langle \hat{u}^k, Ax^{k+1}-z\rangle+\frac{\rho}{2}\|Ax^{k+1}-z\|^2\right\rbrace,\label{eq:iter2-acce}\\
			u^{k+1}=\hat{u}^k+Ax^{k+1}-z^{k+1},\label{eq:iter3-acce}\\
			\hat{u}^{k+1}=u^{k+1}+\alpha_{k+1}(u^{k+1}-u^k),\label{eq:iter31-acce}\\
			\hat{x}^{k+1}=x^{k+1}+\alpha_{k+1}(x^{k+1}-x^k),\label{eq:iter11-acce}\\
			\hat{z}^{k+1}=z^{k+1}+\alpha_{k+1}(z^{k+1}-z^k).\label{eq:iter21-acce}
\end{subnumcases}
\State  Set $k \Leftarrow k+1$.
\EndFor 
\end{algorithmic}
\end{algorithm}


With a little abuse of notation, we continue to  use  $\xi^k:=\Sto f(\hat{x}^k)-\nabla f(\hat{x}^k)$ to denote the gradient noise at the $k$-th iteration of Algorithm \ref{alg:accelerated ADMM}. We assume that the sequence ${\xi^k}$ satisfies Assumption \ref{ass:first-sto}. Under this assumption, it is implied that
\be\label{eq:noise-second-order}
\xi^k={\rho}^{1/4}\left(W(t+\frac{1}{\sqrt{\rho}})-W(t)\right) \text{ for } t=\frac{k}{\sqrt{\rho}}.
\ee
Then under Assumptions \ref{ass:acc-setting} and \ref{ass:first-sto}, we first  establish the continuous dynamical system  of Algorithm \ref{alg:accelerated ADMM}.

Same as previous sections, we will analyze the derivation of the continuous counterpart of Algorithm \ref{alg:accelerated ADMM} with a sufficiently large parameter $\rho$. We set $\alpha_k=\frac{\beta k}{k+\alpha}$ with $\beta=1-\gamma /\sqrt{\rho}>0$ and $\alpha,\gamma>0$, and let $\{x^k\}, \{\hat x^k\}, \{z^k\}, \{\hat z^k\},  \{u^k\}, \{\hat u^k\}$ be generated by Algorithm \ref{alg:accelerated ADMM}. By the optimality condition for  \eqref{eq:iter2-acce}, the following holds
	\bee
	0= \nabla h_{\mu}(z^{k+1})-\rho(Ax^{k+1}-z^{k+1}+\frac{1}{\rho}\hat{u}^k),
	\eee	
	which, together with \eqref{eq:iter1-acce} and $\Sto f(\hat{x}^k)=\nabla f(\hat{x}^k)+\xi^k$, leads to 
	\be\label{eq:optimal-acce}
	0=\tau(x^{k+1}-\hat{x}^k)+\nabla f(\hat{x}^k)+\xi^k+A^{T}\nabla h_{\mu}(z^{k+1})+\rho A^{T}(z^{k+1}-\hat{z}^k)-\rho A^TA(x^{k+1}-\hat{x}^{k}).
	\ee
We introduce the \emph{Ansatz}: $x^k\approx\hx(k/\rho), \hat{\hx}\approx\hat{\hx}(k/\rho),  z^k\approx\hz(k/\rho), \hat{z}^k\approx\hat{\hz}(k/\rho), u^k\approx\hu(k/\rho), \hat{u}^k\approx\hat{\hu}(k/\rho)$ for twice continuously differentiable stochastic processes $\hx$, $\hat{\hx}$, $\hz$, $\hat{\hz}$, $\hu$, $\hat{\hu}$.  Take $k=t/s$ with $s=1/\sqrt{\rho}$. Then, for sufficiently small $s$,  we have $\hx(t)\approx x^{t/s}=x^k, \hx(t+s)\approx x^{(t+s)/s}=x^{k+1}$. 
This conclusion also holds for $\hat{\hx}$, $\hz$, $\hat{\hz}$, $\hu$, $\hat{\hu}$. The setting of $\alpha_k$ can be expressed as 
\be\label{eq:alpha}
	\alpha_k=\frac{\beta k}{k+\alpha}=\frac{ t-t\gamma s }{t+\alpha s} = 1+ O(s).
\ee 
By approximating $x^{k+1}$ and $\hat{x}^{k+1}$ with their first-order Taylor expansions, \eqref{eq:iter11-acce} can be rewritten as:
	\bee
	0=\hat{\hx}(t)+s\dot{\hat{\hx}}(t)+O(s^2)-(\hx(t)+s\dot{\hx}(t)+O(s^2))-\alpha_{k+1}(s\dot{\hx}(t)+O(s^2)),
	\eee
	which, together with $\alpha_k=O(1)$ (from \eqref{eq:alpha}), implies that   
	$\hat{\hx}(t)=\hx(t)+O(s)$. Similarly, by \eqref{eq:iter21-acce} and \eqref{eq:iter31-acce}, $\hat{\hz}(t)=\hz(t)+O(s)$ and $\hat{\hu}(t)=\hu(t)+O(s)$ hold. Using Taylor's theorem for \eqref{eq:iter3-acce} and substituting  $\hat{\hu}(t)=\hu(t)+O(s)$ into \eqref{eq:iter3-acce} yield that 
	\bee
	0=s\dot{\hu}(t)+O(s)-(A\hx(t)+sA\dot{\hx}(t)+O(s^2)-\hz(t)-s\dot{\hz}(t)-O(s^2)),
	\eee
	which leads to 
	\be\label{eq:ax=z-acce}
	A\hx(t)=\hz(t) \text{ as } s\to 0,
	\ee	
 for any $t\in[0,+\infty)$. We further arrive at the following equations: 
 \be\label{eq:adx=dz+os}
 A\dot{\hx}(t)=\dot{\hz}(t) \text{ and } A\ddot{\hx}(t)=\ddot{\hz}(t),
 \ee
 as $ s\to 0$. 
According to  \eqref{eq:noise-second-order}, $\hat{\hx}(t)=\hx(t)+O(s)$ and  $\hat{\hz}(t+s)=\hz(t)+O(s)$,   the relation \eqref{eq:optimal-acce} can be rewritten into 
	\be\label{eq:eq:optimal-acce1}
	\begin{aligned}
	0&=\tau(\hx(t+s)-\hat{\hx}(t))+ \nabla f(\hx(t)+O(s))+A^{T}\nabla h_{\mu}(\hz(t)+O(s))+{s}^{-1/2}\left(W(t+s)-W(t)\right) \\
	&\quad\quad +s^{-2} A^{T}(\hz(t+s)-\hat{\hz}(t))-s^{-2} A^TA(\hx(t+s)-\hat{\hx}(t)).
	\end{aligned}
	\ee
Then, it yields from  Taylor's theorem for $W(t+s)$  that 
	\be\label{eq:optimal-acce2}
	\begin{aligned}
	0&=\tau(\hx(t+s)-\hat{\hx}(t))+ \nabla f(\hx(t)+O(s)) +A^{T}\nabla h_{\mu}(\hz(t)+O(s))+{s}^{1/2}\dot{W}(t)\\
	&\quad\quad+s^{-2} A^{T}(\hz(t+s)-\hat{\hz}(t))-s^{-2} A^TA(\hx(t+s)-\hat{\hx}(t)) +O(s^{3/2}).
    \end{aligned}
	\ee	
	For the  term $\hx(t+s)-\hat{\hx}(t)$ of \eqref{eq:optimal-acce2}, it follows from \eqref{eq:iter11-acce} that 
	\be\label{eq:x-second-taylor}
	\begin{aligned}
		\hx(t+s)-\hat{\hx}(t)&=\hx(t+s)-\hx(t)-\alpha_{k}(\hx(t)-\hx(t-s))\\
		&=s\dot{\hx}(t)+\frac{s^2}{2}\ddot{\hx}(t)+O(s^3)-\alpha_k\left(s\dot{\hx}(t)-\frac{s^2}{2}\ddot{\hx}(t)+O(s^3)\right)\\
		&=s(1-\alpha_k)\dot{\hx}(t)+\frac{s^2(1+\alpha_k)}{2}\ddot{\hx}(t)+(1-\alpha_k)O(s^3),
	\end{aligned}
	\ee
	where the second equality is obtained by applying a second-order Taylor expansion. 
	From the setting of $\alpha_k$, it holds that $s^{-1}(1-\alpha_k)=\gamma+\frac{\alpha}{t}+O(s)$ and $\alpha_k=1+O(s)$, which, together with \eqref{eq:x-second-taylor}, derive 
	\be\label{eq:x-second-taylor1}
	\begin{aligned}
	s^{-2} (\hx(t+s)-\hat{\hx}(t))&=s^{-1}(1-\alpha_k)\dot{\hx}(t)+\frac{1+\alpha_k}{2}\ddot{\hx}(t)+(1-\alpha_k)O(s)\\
	&=(\gamma+\frac{\alpha}{t})\dot{\hx}(t)+O(s)\dot{\hx}(t)+\ddot{\hx}(t)+O(s)\ddot{\hx}(t)+O(s^2)\\
	&=(\gamma+\frac{\alpha}{t})\dot{\hx}(t)+\ddot{\hx}(t)+O(s).
	\end{aligned}
	\ee 
	On the other hand, for $s^{-2}(\hz(t+s)-\hat{\hz}(t))$, it holds that
	\bee
	\begin{aligned}
	s^{-2}(\hz(t+s)-\hat{\hz}(t))=(\gamma+\frac{\alpha}{t})\dot{\hz}(t)+\ddot{\hz}(t)+O(s).
	\end{aligned} 
	\eee
	Inserting  the above two relations into \eqref{eq:optimal-acce2} indicates 
	\be\label{eq:eq:optimal-acce3}	
    \begin{aligned}
	0&=\tau(\hx(t+s)-\hat{\hx}(t))+ \nabla f(\hx(t)+O(s)) +A^{T}\nabla h_{\mu}(\hz(t)+O(s))+{s}^{1/2}\dot{W}(t)\\
    &\quad\quad +(\gamma+\frac{\alpha}{t})A^T\dot{\hz}(t)+A^T\ddot{\hz}(t)-(\gamma+\frac{\alpha}{t})A^T A\dot{\hx}(t)-A^T A\ddot{\hx}(t) +O(s).	
    \end{aligned}
	\ee
	Since $\tau s^2> \|A^T A\| +s^2/\eta$ from the definition of $\tau$ and $1/\rho=s^2$, there exists a constant $\lambda> \|A^TA\|$ such that $\tau s^2=\lambda+O(s^2)$.  That, together with \eqref{eq:x-second-taylor1}, yields 
	\bee
\tau(\hx(t+s)-\hat{\hx}(t))= \tau s^2\cdot \left((\gamma+\frac{\alpha}{t})\dot{\hx}(t)+\ddot{\hx}(t)+O(s)\right)= \lambda\left((\gamma+\frac{\alpha}{t})\dot{\hx}(t)+\ddot{\hx}(t)\right)+O(s).
	\eee 
	Taking the above equality into \eqref{eq:eq:optimal-acce3},  we obtain
	\bee
 \begin{aligned}
	0&=\lambda\ddot{\hx}(t)+\lambda(\gamma+\frac{\alpha}{t})\dot{\hx}(t)+\nabla f(\hx(t)+O(s)) +A^{T}\nabla h_{\mu}(\hz(t)+O(s)) +{s}^{1/2}\dot{W}(t)\\
  &\quad\quad +(\gamma+\frac{\alpha}{t})A^T\dot{\hz}(t)+A^T\ddot{\hz}(t)-(\gamma+\frac{\alpha}{t})A^T A\dot{\hx}(t)-A^T A\ddot{\hx}(t) +O(s).
   \end{aligned}
	\eee
 According to \eqref{eq:ax=z-acce} and \eqref{eq:adx=dz+os}, and following  discussions in  Subsection \ref{subsec:lp-sadmm-system}, for sufficiently small $s>0$, we  consider the following second-order stochastic differential equation 
 \be\label{eq:inclusion-acce}
	0=\lambda\ddot{\hx}(t)+\lambda(\gamma+\frac{\alpha}{t})\dot{\hx}(t)+\nabla H_{\mu}(\hx(t)) +\rho^{-1/4}\dot{W}(t) \text{ with } \hx(0)=x^0,
 \ee
 where $\lambda> \|A^T A\|.$

\begin{remark}\label{re:w=0}
(i) If $\tilde{\nabla} f(\hat x^k) = \nabla f(\hat x^k)$, i.e., $\xi^k=0$ for any $k\ge 0$,  Algorithm \ref{alg:accelerated ADMM} reduces to a  deterministic accelerated LP-ADMM method. The corresponding continuous dynamical system is  given by 
\be\label{eq:acce-ode}
0=\lambda\ddot{\hx}(t)+\lambda(\gamma+\frac{\alpha}{t})\dot{\hx}(t)+\nabla H_{\mu}(\hx(t)),
\ee
which  is in the same form as  the  second-order dynamical system (2) presented in \cite{Radu2020}. 

(ii) Let $\hX_1(t):=\hx(t)$, $\hX_2(t):=\dot{\hx}(t)$, and $\hX(t):=[\hX_1(t);\hX_2(t)]$. Then \eqref{eq:inclusion-acce} can be expressed as the following first-order stochastic differential equation:
\be\label{eq:second-into-first}
0= \lambda\dot{\hX}(t)+[-\lambda \hX_2(t);\lambda(\gamma+\frac{\alpha}{t})\hX_2(t)+\nabla H_{\mu}(\hX_1(t))]+\rho^{-1/4}[0;I]\dot{W}(t).
\ee
\end{remark}

A weak approximation theorem for \eqref{eq:inclusion-acce} can also be provided, similar to Theorem \ref{th:weak approximation}. Similar result is also presented for the stochastic differential equation for momentum SGD in \cite[Theorem 14]{Li2017icml}.
\begin{theorem}\label{th:weak approximation-second}
Under the conditions of Theorem \ref{th:weak approximation}, 
the stochastic process  $\left\{[\hx(t);\dot{\hx}(t)]:t\in[0,T]\right\}$ satisfying \eqref{eq:inclusion-acce}  is an order $1$ weak approximation of Algorithm  \ref{alg:accelerated ADMM}. 
\end{theorem}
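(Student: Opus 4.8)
The plan is to reduce Theorem~\ref{th:weak approximation-second} to the weak-approximation machinery of \cite{Li2017icml} applied to the augmented first-order system, exactly as Theorem~\ref{th:weak approximation} was obtained from \cite[Corollary 10]{Li2017icml} and the momentum-SGD result \cite[Theorem 14]{Li2017icml}. The first step is to pass to phase space. On the continuous side I would use the reformulation already recorded in Remark~\ref{re:w=0}(ii): with $\hX=[\hX_1;\hX_2]=[\hx;\dot{\hx}]$, the second-order equation \eqref{eq:inclusion-acce} is the first-order It\^o SDE \eqref{eq:second-into-first}, whose drift is $[-\lambda\hX_2;\,\lambda(\gamma+\alpha/t)\hX_2+\nabla H_\mu(\hX_1)]$ and whose diffusion is the constant $\rho^{-1/4}[0;I]$. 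On the discrete side I would track the augmented state $X^k:=[x^k;\,s^{-1}(x^k-x^{k-1})]$, the natural discrete velocity under the scaling $s=1/\sqrt{\rho}$, so that the momentum steps \eqref{eq:iter1-acce}--\eqref{eq:iter11-acce}, after eliminating $z^{k+1},u^{k+1}$ through the optimality condition for \eqref{eq:iter2-acce} and the relation $A\hx=\hz$ from \eqref{eq:ax=z-acce}, reduce---up to the higher-order terms already discarded in the derivation of \eqref{eq:inclusion-acce}---to a stochastic Nesterov/heavy-ball recursion driven by $\nabla H_\mu$ and the Gaussian noise $\xi^k$.

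Second, I would verify that this discrete recursion and the SDE \eqref{eq:second-into-first} fit the hypotheses of the momentum weak-approximation theorem. Conditions (i)--(ii) of Theorem~\ref{th:weak approximation} supply exactly what is needed: $\nabla H_\mu$ is globally Lipschitz (Remark~\ref{re:moreau}(ii)), and $H_\mu$ has weak derivatives up to order~$3$ of polynomial growth, and these transfer verbatim to the augmented drift since it is affine in $\hX_2$ and depends on $\hX_1$ only through $\nabla H_\mu$. Together with the resulting uniform moment bounds on $\hX$, this yields the polynomial-growth control on the test functions $g\in\cG^{\alpha+1}$ demanded by Definition~\ref{def:weak approximation}.

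Third, I would carry out the one-step weak Taylor (moment-matching) comparison that is the heart of the argument, with bookkeeping governed by $s=1/\sqrt{\rho}$, so that $s^{1/2}=\rho^{-1/4}$ matches the diffusion coefficient and $s^{2}=1/\rho$. Order~$1$ accuracy in $\rho$ (error $O(1/\rho)=O(s^2)$ over $\lfloor\sqrt{\rho}\,T\rfloor=T/s$ steps) requires a local one-step error of $O(s^3)$, precisely the level at which the expansions \eqref{eq:x-second-taylor}--\eqref{eq:optimal-acce2} were truncated. Matching the first two conditional moments of the discrete increment against those of \eqref{eq:second-into-first} then follows from $\xi^k\sim\cN(0,1)$ and the expansion $s^{-1}(1-\alpha_k)=\gamma+\alpha/t+O(s)$ of \eqref{eq:alpha}, and summing the local errors through the semigroup/telescoping estimate of \cite{Li2017icml} delivers the uniform $O(1/\rho)$ bound.

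I expect the main obstacle to be the nonautonomous, singular damping coefficient $\gamma+\alpha/t$, which blows up as $t\to 0^{+}$ and falls outside the time-homogeneous setting of \cite[Theorem 14]{Li2017icml}. The remedy is to treat it as Lipschitz in $\hX$ but only locally integrable in $t$ near the origin: first establish existence, uniqueness, and finite moments for the trajectory of \eqref{eq:inclusion-acce} on $[0,T]$ (as in the Nesterov-ODE analyses, where the solution extends continuously to $t=0$ with $\dot{\hx}(0)=0$), then run the one-step estimates on $[t_0,T]$ and bound the short initial window $[0,t_0]$ directly using the finite velocity moments. Propagating this $t$-dependence through constants while keeping them independent of $\rho$ is the delicate point; the rest is a routine adaptation of the momentum-SGD proof.
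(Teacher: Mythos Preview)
The paper does not actually prove this theorem: it simply states it after remarking that ``a similar result is also presented for the stochastic differential equation for momentum SGD in \cite[Theorem~14]{Li2017icml},'' in the same spirit as Theorem~\ref{th:weak approximation} is disposed of by citing \cite[Corollary~10]{Li2017icml}. Your plan---pass to the first-order phase-space system of Remark~\ref{re:w=0}(ii) and invoke the momentum-SGD weak-approximation machinery of \cite{Li2017icml}---is precisely the route the paper points to, only spelled out in far more detail (including the handling of the time-singular damping $\gamma+\alpha/t$, which the paper does not address at all).
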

Regarding the existence and uniqueness of a solution of \eqref{eq:inclusion-acce}, note that \eqref{eq:inclusion-acce} is equivalent to \eqref{eq:second-into-first} as stated in Remark \ref{re:w=0}(ii), and  by \cite[Theorem 5.2.1]{Oksendal2003}, \eqref{eq:second-into-first} has a unique solution if the term $[-\lambda \hX_2(t);\lambda(\gamma+\frac{\alpha}{t})\hX_2(t)+\nabla H_{\mu}(\hX_1(t))]$ is Lipschitz continuous with respect to $\hX(t)$. This condition is satisfied due to the Lipschitz continuity of $\nabla H_{\mu}$, which is guaranteed by Assumption \ref{ass:acc-setting}, as demonstrated in Remark \ref{re:moreau}(ii).

\subsection{Convergence analysis}\label{sec:conver-analysis-acce}
Lemma \ref{lem:critical} characterizes the relation between the critical points of $H$ and $H_{\mu}$ under Assumption \ref{ass:acc-setting}. In the following, for any given $x\in \crit H_\mu$,  we will continue to use $$\bar{x}:=x-A^T(AA^T)^{-1}(Ax-\prox_{\mu h}(Ax))$$ to represent an approximation to a critical point of $H$ for small $\mu$. Therefore, to establish that $\bar{\hx}$, which is constructed in \eqref{bar-x}, converges almost surely to an $\epsilon$-approximate critical point of $H$, it is necessary to derive the almost sure convergence of $\hx$ to the critical point of $H_\mu$ under an appropriate setting of $\mu$. Here, $\hx$ is a component of the trajectory $[\hx;\dot{\hx}]$ generated by \eqref{eq:inclusion-acce}. In this case,  however,  characterizing the descent property of $H_{\mu}$ becomes challenging. To address this issue, we need to  introduce an auxiliary function. Following a similar analysis to \cite{Radu2020}, we define
\[\cL_{\mu}(x,y):=H_{\mu}(x)+\frac{\lambda}{2}\|x-y\|^2,\] 
where $x,y\in\R^n$, $\mu\in(0,1/\varrho)$ and $\lambda>\|A^T A\|$.  

The lemma below presents a relationship between the critical points of $H_{\mu}$ and $\cL_{\mu}$, which is inherent in the definition of a critical point. Based on this relationship, we will leverage $\mathcal L_\mu$ to identify and investigate critical points of $H_\mu$.

\begin{lemma}\label{lem:critical-ly}
	For any $x, y\in\R^n$,  $(x,y)\in\crit \cL_{\mu}$ is equivalent to $y=x\in\crit H_{\mu}$.  
\end{lemma}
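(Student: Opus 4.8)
The plan is to exploit the smoothness of $\cL_\mu$ to reduce the criticality condition to a gradient equation, and then to observe that this gradient equation separates cleanly in the $x$- and $y$-blocks. The first step is to recall that, by Remark \ref{re:moreau}(ii), $H_\mu$ is $L$-smooth, hence continuously differentiable; since $\frac{\lambda}{2}\|x-y\|^2$ is smooth as well, $\cL_\mu$ is a $C^1$ function on $\R^n\times\R^n$. Consequently, its Clarke subgradient reduces to the single-point set $\{\nabla\cL_\mu(x,y)\}$ (as noted in Subsection \ref{subsec:subdifferential}, for a differentiable function the critical set is exactly the zero set of the gradient), so that $(x,y)\in\crit\cL_\mu$ if and only if $\nabla\cL_\mu(x,y)=0$.

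The second step is a direct computation of the two blocks of the gradient:
\[
\nabla_x\cL_\mu(x,y)=\nabla H_\mu(x)+\lambda(x-y),\qquad
\nabla_y\cL_\mu(x,y)=\lambda(y-x).
\]
Setting $\nabla_y\cL_\mu(x,y)=0$ and using $\lambda>\|A^TA\|\ge 0$ (in particular $\lambda>0$) immediately forces $y=x$. Substituting $y=x$ into the first block yields $\nabla_x\cL_\mu(x,x)=\nabla H_\mu(x)$, so the remaining condition $\nabla_x\cL_\mu(x,y)=0$ becomes $\nabla H_\mu(x)=0$, i.e.\ $x\in\crit H_\mu$. This establishes the forward implication: $(x,y)\in\crit\cL_\mu$ implies $y=x\in\crit H_\mu$.

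For the converse, assume $y=x\in\crit H_\mu$, so that $\nabla H_\mu(x)=0$ and $x-y=0$. Plugging these into the two gradient blocks gives $\nabla_x\cL_\mu(x,y)=\nabla H_\mu(x)+\lambda\cdot 0=0$ and $\nabla_y\cL_\mu(x,y)=\lambda\cdot 0=0$, hence $\nabla\cL_\mu(x,y)=0$ and $(x,y)\in\crit\cL_\mu$. This closes the equivalence.

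I do not anticipate a genuine obstacle here: the statement is essentially a bookkeeping fact about the quadratic coupling term. The only point that warrants care is the reduction in the first step, namely justifying that criticality for $\cL_\mu$ is equivalent to the vanishing of its (ordinary) gradient; this is exactly where the $C^1$ regularity of $H_\mu$ from Remark \ref{re:moreau}(ii) is used, and it is what lets us avoid any set-valued subdifferential calculus. Everything else is the two-line gradient computation above together with the positivity of $\lambda$.
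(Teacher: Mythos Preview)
Your proof is correct and is exactly the routine gradient computation the paper has in mind; the paper itself omits the proof, simply noting that the equivalence ``is inherent in the definition of a critical point.'' Your only addition is making explicit that $\cL_\mu$ is $C^1$ (via Remark \ref{re:moreau}(ii)) so that $\crit\cL_\mu$ is the zero set of $\nabla\cL_\mu$, which is precisely the justification the paper leaves implicit.
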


  The following lemma provides a descent property of $\cL_\mu$.

\begin{lemma}\label{lem:descent-acce}
	Suppose that  Assumptions \ref{ass:acc-setting} and \ref{ass:first-sto} hold, and let $[\hx(t);\dot{\hx}(t)]$ be a bounded trajectory of \eqref{eq:inclusion-acce}. Define \[\hp(t):=c\dot{\hx}(t)+\hx(t) \quad \mbox{and}\quad \hq(t):=\left(c+\sqrt{1+c\gamma+\frac{c\alpha}{t}}\right)\dot{\hx}(t)+\hx(t),
	\]
where $0<c<\min\{\frac{2\lambda}{L}, \frac{\sqrt{L^2+2\lambda \gamma L}-L}{L}\}$  with $\lambda, \gamma, \alpha$ shown in \eqref{eq:inclusion-acce} and 
$L:=L_f+\max\{\frac{1}{\mu}, \frac{\varrho}{1-\varrho\mu}\}\cdot\|A\|^2$.   
	Then  for any $t_2\ge t_1\ge0$, 
	\be\label{eq:descent-acce}
	\Exp[\cL_{\mu}(\hp(t_2),\hq(t_2))]+\Exp\left[a\int_{t_1}^{t_2}{\|\dot{\hx}(t)\|^2}dt+b\int_{t_1}^{t_2}{\|\ddot{\hx}(t)\|^2}dt\right]\leq\Exp[ \cL_{\mu}(\hp(t_1),\hq(t_1))],
	\ee  	
	where
	$a:=\lambda\gamma-cL-\frac{c^2L}{2}>0$, $b:=c\lambda-\frac{c^2L}{2}>0$. 
\end{lemma}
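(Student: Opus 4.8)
The plan is to differentiate the Lyapunov function along the trajectory, extract a clean dissipation term of the form $-a\|\dot{\hx}\|^2-b\|\ddot{\hx}\|^2$, and then integrate and take expectation. First I would simplify the Lyapunov function itself: since $\hp(t)-\hq(t)=-\sqrt{1+c\gamma+c\alpha/t}\,\dot{\hx}(t)$, the quadratic term collapses and
\[
\cL_{\mu}(\hp(t),\hq(t))=H_{\mu}(\hp(t))+\frac{\lambda}{2}\Big(1+c\gamma+\frac{c\alpha}{t}\Big)\|\dot{\hx}(t)\|^2 .
\]
I would then compute $\tfrac{d}{dt}\cL_{\mu}(\hp,\hq)$ via the stochastic chain rule \eqref{eq:sto-chain rule for H} applied to the first-order reformulation \eqref{eq:second-into-first}, retaining the explicit time dependence carried by the $c\alpha/t$ factor.

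The key algebraic steps come next. Using $\dot{\hp}=\dot{\hx}+c\ddot{\hx}$, the derivative splits into $\langle\nabla H_{\mu}(\hp),\dot{\hx}+c\ddot{\hx}\rangle$ plus the derivative of the kinetic term. Since the equation \eqref{eq:inclusion-acce} only supplies $\nabla H_{\mu}(\hx)$ rather than $\nabla H_{\mu}(\hp)$, I would write $\nabla H_{\mu}(\hp)=\nabla H_{\mu}(\hx)+\big(\nabla H_{\mu}(\hp)-\nabla H_{\mu}(\hx)\big)$ and substitute $\nabla H_{\mu}(\hx)=-\lambda\ddot{\hx}-\lambda(\gamma+\frac{\alpha}{t})\dot{\hx}-\rho^{-1/4}\dot{W}$. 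The crucial observation is that the $\langle\dot{\hx},\ddot{\hx}\rangle$ contributions cancel exactly, because their total coefficient $-\lambda-c\lambda(\gamma+\frac{\alpha}{t})+\lambda(1+c\gamma+\frac{c\alpha}{t})$ vanishes. The residual $\nabla H_{\mu}(\hp)-\nabla H_{\mu}(\hx)$ is controlled by the $L$-smoothness of $H_{\mu}$ from Remark \ref{re:moreau}(ii), namely $\|\nabla H_{\mu}(\hp)-\nabla H_{\mu}(\hx)\|\le L\|\hp-\hx\|=Lc\|\dot{\hx}\|$; a Cauchy--Schwarz estimate followed by Young's inequality converts $\langle\nabla H_{\mu}(\hp)-\nabla H_{\mu}(\hx),\dot{\hx}+c\ddot{\hx}\rangle$ into $Lc\|\dot{\hx}\|^2+\frac{Lc^2}{2}\|\dot{\hx}\|^2+\frac{Lc^2}{2}\|\ddot{\hx}\|^2$. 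Collecting terms and bounding above by discarding the two nonpositive quantities $-\lambda\frac{\alpha}{t}\|\dot{\hx}\|^2$ and $-\frac{\lambda c\alpha}{2t^2}\|\dot{\hx}\|^2$ yields
\[
\frac{d}{dt}\cL_{\mu}(\hp,\hq)\le -a\|\dot{\hx}\|^2-b\|\ddot{\hx}\|^2-\rho^{-1/4}\langle\dot{W},\dot{\hx}+c\ddot{\hx}\rangle ,
\]
with $a=\lambda\gamma-cL-\frac{c^2L}{2}$ and $b=c\lambda-\frac{c^2L}{2}$. The prescribed range $0<c<\min\{\frac{2\lambda}{L},\frac{\sqrt{L^2+2\lambda\gamma L}-L}{L}\}$ is precisely what makes $a,b>0$, the two upper bounds being the positive roots of $\frac{L}{2}c^2+Lc-\lambda\gamma$ and of $c\lambda-\frac{c^2L}{2}$, respectively.

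Finally I would integrate this differential inequality over $[t_1,t_2]$ and take expectations. The stochastic contribution $\int_{t_1}^{t_2}\langle\dot{W},\dot{\hx}+c\ddot{\hx}\rangle\,dt$ is an It\^o integral whose integrand lies in $\mathbb L_e^2$, using boundedness of $[\hx;\dot{\hx}]$ together with the bound on $\ddot{\hx}$ obtained from \eqref{eq:inclusion-acce} and the Lipschitz continuity of $\nabla H_{\mu}$; hence its expectation vanishes by the zero-mean property recalled in Subsection \ref{sec:brownian}, exactly as in the proof of Lemma \ref{lem:sto-descent}. Rearranging then gives \eqref{eq:descent-acce}.

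The hard part will be the rigorous treatment of the stochastic term, not the (lengthy but routine) bookkeeping that produces $a$ and $b$. Because the noise enters through $\ddot{\hx}$, one must ensure that the martingale part is isolated as a genuine It\^o integral with an adapted, $\mathbb L_e^2$ integrand and that the residual It\^o correction is the negligible $O(\rho^{-1/2})$ term absorbed by the weak-approximation framework of Subsection \ref{subsec:lp-sadmm-system}. This is the same formal chain-rule device already employed in Lemma \ref{lem:sto-descent} and Theorem \ref{th:sto-weak-convergence}, and I would justify it in the identical manner.
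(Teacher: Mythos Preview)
Your proposal is correct and follows essentially the same route as the paper: simplify $\cL_{\mu}(\hp,\hq)$ using $\hp-\hq$, differentiate via the stochastic chain rule, split $\nabla H_{\mu}(\hp)=\nabla H_{\mu}(\hx)+(\nabla H_{\mu}(\hp)-\nabla H_{\mu}(\hx))$ and substitute \eqref{eq:inclusion-acce} so that the $\langle\dot{\hx},\ddot{\hx}\rangle$ cross terms cancel, control the residual by $L$-smoothness with the same Young-type estimate, drop the $-\lambda\frac{\alpha}{t}\|\dot{\hx}\|^2$ and $-\frac{\lambda c\alpha}{2t^2}\|\dot{\hx}\|^2$ terms, and finally eliminate the It\^o integral by the zero-mean property. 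The only cosmetic difference is the order of integration and expectation at the end, which is immaterial.
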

\begin{proof}
	It follows from the definitions of $\cL_{\mu}$, $\hp(t)$ and $\hq(t)$ that  
	\be\label{eq:describe of L}
	\cL_{\mu}(\hp(t),\hq(t))=H_{\mu}(c\dot{\hx}(t)+\hx(t))+\frac{\lambda}{2}(1+c\gamma+\frac{c\alpha}{t})\|\dot{\hx}(t)\|^2.
	\ee
  We use the chain rule \eqref{eq:sto-chain rule for H} for the composition of $\cL_{\mu}$ and the trajectory $[\hx(t);\dot{\hx}(t)]$, obtaining
	\be\label{eq:differential of L}
	\begin{aligned}
		& \frac{d}{dt}\cL_{\mu}(\hp(t),\hq(t))\\
		&=\langle\nabla H_{\mu}(c\dot{\hx}(t)+\hx(t)),  c\ddot{\hx}(t)+\dot{\hx}(t)\rangle+\lambda(1+c\gamma+\frac{c\alpha}{t})\langle\dot{\hx}(t),\ddot{\hx}(t) \rangle-\frac{\lambda c\alpha}{2t^2} \|\dot{\hx}(t)\|^2\\
		&=\langle\nabla H_{\mu}(c\dot{\hx}(t)+\hx(t))-\nabla H_{\mu}(\hx(t)),  c\ddot{\hx}(t)+\dot{\hx}(t)\rangle+\langle\nabla H_{\mu}(\hx(t)),  c\ddot{\hx}(t)+\dot{\hx}(t)\rangle\\
		&\quad\quad+\lambda(1+c\gamma+\frac{c\alpha}{t})\langle\dot{\hx}(t),\ddot{\hx}(t) \rangle-\frac{\lambda c\alpha}{2t^2} \|\dot{\hx}(t)\|^2\\
		&\leq cL\|\dot{\hx}(t)\|\|c\ddot{\hx}(t)+\dot{\hx}(t)\|+\langle-\lambda\ddot{\hx}(t)-\lambda(\gamma+\frac{\alpha}{t})\dot{\hx}(t)-\rho^{-1/4}\dot{W}(t),c\ddot{\hx}(t)+\dot{\hx}(t)\rangle\\	&\quad\quad+\lambda(1+c\gamma+\frac{c\alpha}{t})\langle\dot{\hx}(t),\ddot{\hx}(t) \rangle-\frac{\lambda c\alpha}{2t^2} \|\dot{\hx}(t)\|^2\\
		&=cL\|\dot{\hx}(t)\|\|c\ddot{\hx}(t)+\dot{\hx}(t)\|-c\lambda \|\ddot{\hx}(t)\|^2-\lambda(\gamma+\frac{\alpha}{t}+\frac{ c\alpha}{2t^2})\|\dot{\hx}(t)\|^2\\
		&\quad\quad-\rho^{-1/4}\langle\dot{W}(t),c\ddot{\hx}(t)+\dot{\hx}(t)\rangle,
	\end{aligned}
	\ee	
	where the above inequality is deduced from the Lipschitz continuity of $\nabla H_\mu$ (Remark \ref{re:moreau}(ii)) and \eqref{eq:inclusion-acce}.  Applying inequality $2\langle x,y\rangle\leq \|x\|^2+\|y\|^2$ for any  $x,y\in\R^n$, yields that 
	\bee
	\|\dot{\hx}(t)\|\|c\ddot{\hx}(t)+\dot{\hx}(t)\|\leq c\|\dot{\hx}(t)\|\|\ddot{\hx}(t)\|+\|\dot{\hx}(t)\|^2\leq (1+\frac{c}{2})\|\dot{\hx}(t)\|^2+\frac{c}{2}\|\ddot{\hx}(t)\|^2.
	\eee
	Substituting the above inequality into \eqref{eq:differential of L} and rearranging it derive the inequality:
	\bee
	\frac{d}{dt}\cL_{\mu}(\hp(t),\hq(t))\leq -a\|\dot{\hx}(t)\|^2-b\|\ddot{\hx}(t)\|^2-\rho^{-1/4}\langle\dot{W}(t),c\ddot{\hx}(t)+\dot{\hx}(t)\rangle.
	\eee
    By taking expectation on both sides of the above inequality, we  have
	\bee
	\frac{d}{dt}\Exp[\cL_{\mu}(\hp(t),\hq(t))]\leq -a\Exp[\|\dot{\hx}(t)\|^2]-b\Exp[\|\ddot{\hx}(t)\|^2]-\rho^{-1/4}\Exp[\langle\dot{W}(t),c\ddot{\hx}(t)+\dot{\hx}(t)\rangle].	
	\eee
    From the boundedness of a trajectory, it follows that $c\ddot{\hx}+\dot{\hx}\in \mathbb L_e^2(0, t)$ for any  $t> 0$. Thus,  $\int_{0}^{t}{\langle\dot{W}(s),c\ddot{\hx}(s)+\dot{\hx}(s)\rangle}ds$ is an It{\^o}'s integral,   
    and 
    \[\Exp\left[\int_{0}^{t}{\langle\dot{W}(s),c\ddot{\hx}(s)+\dot{\hx}(s)\rangle}ds\right]=0,\] 
    which further implies  that $\Exp[\langle\dot{W}(t),c\ddot{\hx}(t)+\dot{\hx}(t)\rangle]=0$ for almost every $t> 0$. Hence, the following inequality holds almost everywhere:
	\be\label{eq:descent-acce1}
	\frac{d}{dt}\Exp[\cL_{\mu}(\hp(t),\hq(t))]\leq -a\Exp[\|\dot{\hx}(t)\|^2]-b\Exp[\|\ddot{\hx}(t)\|^2].	
	\ee	
	Then, integrating the above inequality over $[t_1,t_2]$  leads to \eqref{eq:descent-acce}. The  definition of $c$ indicates that the constants $a,b$ are positive.  We complete this proof.  
\end{proof}

Using the previously established descent property of $\cL_{\mu}$, we proceed to investigate a preliminary convergence result.
\begin{theorem}\label{th:weak-conver1-acce}
Suppose that Assumptions \ref{ass:acc-setting} and \ref{ass:first-sto} hold, and let $[\hx(t);\dot{\hx}(t)]$ be  a bounded trajectory  of \eqref{eq:inclusion-acce}. Then it holds that
	\[\int_{0}^{\infty}{\|\dot{\hx}(t)\|^2} dt<\infty\text{ a.s.}, \  \int_{0}^{\infty}{\|\ddot{\hx}(t)\|^2} dt<\infty \text{ a.s.}\]
	and $\dot{\hx}(t)\to 0$ almost surely.
\end{theorem}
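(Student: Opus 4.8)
The plan is to mirror the strategy of Theorems \ref{th:weak-convergence}(i) and \ref{th:sto-weak-convergence}(i), but now driven by the Lyapunov descent of Lemma \ref{lem:descent-acce} rather than by a descent of $H_{\mu}$ itself. First I would observe that the auxiliary function is uniformly bounded below: since $\cL_{\mu}(\hp(t),\hq(t))=H_{\mu}(\hp(t))+\frac{\lambda}{2}\|\hp(t)-\hq(t)\|^{2}\ge \inf H_{\mu}>-\infty$ by Remark \ref{re:moreau}(ii) together with $\lambda>0$, the quantity $\Exp[\cL_{\mu}(\hp(t),\hq(t))]$ is bounded below. Combined with the differential descent inequality \eqref{eq:descent-acce1}, which gives $\frac{d}{dt}\Exp[\cL_{\mu}(\hp(t),\hq(t))]\le 0$, the map $t\mapsto\Exp[\cL_{\mu}(\hp(t),\hq(t))]$ is nonincreasing and hence converges to a finite limit $\bar{\cL}_{\mu}$.

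Next, setting $t_1=0$ and letting $t_2\to+\infty$ in \eqref{eq:descent-acce}, the monotone convergence theorem (the integrands are nonnegative and the partial integrals increase) yields
\[
a\,\Exp\Big[\int_{0}^{\infty}\|\dot{\hx}(t)\|^{2}dt\Big]+b\,\Exp\Big[\int_{0}^{\infty}\|\ddot{\hx}(t)\|^{2}dt\Big]\le \Exp[\cL_{\mu}(\hp(0),\hq(0))]-\bar{\cL}_{\mu}<+\infty.
\]
Because $a,b>0$ (guaranteed by the admissible range of $c$ in Lemma \ref{lem:descent-acce}) and both integrands are nonnegative, each expectation on the left is separately finite. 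Since a nonnegative random variable with finite expectation is finite almost surely, this gives $\int_{0}^{\infty}\|\dot{\hx}(t)\|^{2}dt<\infty$ a.s. and $\int_{0}^{\infty}\|\ddot{\hx}(t)\|^{2}dt<\infty$ a.s., which are the first two claims.

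It remains to upgrade the square-integrability of $\dot{\hx}$ to the pointwise statement $\dot{\hx}(t)\to 0$ almost surely; this is the main obstacle, because square-integrability alone yields only convergence along a subsequence, as in Theorem \ref{th:weak-convergence}(iii). The plan is to set $g(t):=\|\dot{\hx}(t)\|^{2}$ and invoke the chain rule \eqref{eq:sto-chain rule for H} to obtain $g'(t)=2\langle\dot{\hx}(t),\ddot{\hx}(t)\rangle$, whence by the elementary inequality $2\|u\|\,\|v\|\le\|u\|^{2}+\|v\|^{2}$ and the two integral bounds just derived,
\[
\int_{0}^{\infty}|g'(t)|\,dt\le \int_{0}^{\infty}\big(\|\dot{\hx}(t)\|^{2}+\|\ddot{\hx}(t)\|^{2}\big)\,dt<+\infty \quad\text{a.s.}
\]
Thus $g$ has finite total variation on $[0,+\infty)$ almost surely, so $\lim_{t\to+\infty}g(t)$ exists almost surely; and since $g\ge0$ with $\int_{0}^{\infty}g(t)\,dt<\infty$ a.s., this limit must be zero (a positive limit would force the integral to diverge). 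Hence $\|\dot{\hx}(t)\|^{2}\to0$, i.e. $\dot{\hx}(t)\to0$ almost surely, completing the proof. The delicate points worth care are the pathwise interpretation of $g'$ and the use of the chain rule in the stochastic setting, handled as in Subsection \ref{subsec:lp-sadmm-system} for sufficiently large $\rho$, and the observation that the bounded-variation argument is applied sample-path-wise on the probability-one event where both integrals are finite.
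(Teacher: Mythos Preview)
Your proof is correct and follows essentially the same route as the paper: both first combine the lower boundedness of $\cL_{\mu}$ with the descent inequality \eqref{eq:descent-acce} to get finiteness of $\int_{0}^{\infty}\Exp[\|\dot{\hx}\|^{2}]\,dt$ and $\int_{0}^{\infty}\Exp[\|\ddot{\hx}\|^{2}]\,dt$, hence almost-sure finiteness of the pathwise integrals. For the conclusion $\dot{\hx}(t)\to0$ a.s., the paper invokes \cite[Lemma~4]{Radu2018} applied to $\frac{d}{dt}\|\dot{\hx}(t)\|^{2}\le\|\dot{\hx}(t)\|^{2}+\|\ddot{\hx}(t)\|^{2}$, which is precisely the bounded-variation argument you wrote out explicitly.
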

\begin{proof}
Assumption \ref{ass:acc-setting} imposes a lower bound on $f$ and $h$, making it straightforward to establish the lower boundedness of $\cL_{\mu}$, defined in \eqref{eq:describe of L}. This, along with the nonincreasing property of $\Exp[\cL_{\mu}(\hp(t),\hq(t))]$ demonstrated in Lemma \ref{lem:descent-acce}, suggests the existence of a finite constant $\bar{\cL_{\mu}}$ such that 
	\be\label{eq:obj-conv-acce}
	\Exp[\cL_{\mu}(\hp(t),\hq(t))]\to \bar{\cL}_{\mu} \text{ as } t\to \infty.
	\ee	
  It is noteworthy  from the boundedness  of trajectory that  $\dot{\hx}(t), \ddot{\hx}(t)\in\mathbb L_e^2(t_1, t_2)$  for any  $t_1,t_2\ge 0$, thus we can interchange the order of integration and expectation in \eqref{eq:descent-acce}, obtaining
	\bee
a\int_{t_1}^{t_2}{\Exp\left[\|\dot{\hx}(t)\|^2\right]}dt+b\int_{t_1}^{t_2}{\Exp\left[\|\ddot{\hx}(t)\|^2\right]}dt\leq\Exp[ \cL_{\mu}(\hp(t_1),\hq(t_1))]-	\Exp[\cL_{\mu}(\hp(t_2),\hq(t_2))].
	\eee 	
	Let $t_1=0$ and $t_2\to +\infty$ in the above inequality. Then it, together with  \eqref{eq:obj-conv-acce}, implies 
	\be\label{eq:dotx-l2}
	\int_{0}^{+\infty}{\Exp\left[\|\dot{\hx}(t)\|^2\right]}dt<+\infty \text{ and } \int_{0}^{+\infty}{\Exp\left[\|\ddot{\hx}(t)\|^2\right]}dt<+\infty,
	\ee
	which further indicates 
	\bee\label{eq:ddotx-l1}
	\int_{0}^{+\infty}{\|\dot{\hx}(t)\|^2}dt<\infty \text{ a.s.}\text{ and } \int_{0}^{+\infty}{\|\ddot{\hx}(t)\|^2}dt<\infty\text{ a.s.}
	\eee
   Therefore, by \cite[Lemma 4]{Radu2018}, it follows from   $\frac{d}{dt}(\|\dot{\hx}(t)\|^2)\leq \|\dot{\hx}(t)\|^2+\|\ddot{\hx}(t)\|^2$ that $\|\dot{\hx}(t)\|\to 0$ almost surely. 
\end{proof}

Next, we will derive a weak convergence result.
\begin{theorem}\label{th:weak-conv2-acce}
	Suppose that   Assumptions \ref{ass:acc-setting} and \ref{ass:first-sto} hold. Let $[\hx(t);\dot{\hx}(t)]$ be  a bounded trajectory generated by  \eqref{eq:inclusion-acce} and $\cC$ be the set consisting of all  cluster points of  $\{(\hp(t),\hq(t))\}$. Then 
the following statements hold true:
	\begin{itemize}
		\item[(i)] $\cC$ is nonempty, almost surely compact and 
		\[\dist((\hp(t),\hq(t)),\cC)\to 0;\]
       \item[(ii)] $\cL_{\mu}$ has constant expectation over  $\cC$;
	  \item[(iii)]  $\cC\subseteq\crit \cL_{\mu}$ holds almost surely. 
	\end{itemize}
\end{theorem}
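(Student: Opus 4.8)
The plan is to prove the three items by paralleling the first-order analysis of Theorem \ref{th:sto-weak-convergence}, exploiting the fact that $\dot{\hx}(t)\to 0$ almost surely (Theorem \ref{th:weak-conver1-acce}) to reduce everything to the behaviour of $\hx$ alone. The structural observation that drives the whole argument is this: since $\dot{\hx}(t)\to 0$ a.s.\ and the coefficients $c$ and $c+\sqrt{1+c\gamma+c\alpha/t}$ are bounded for $t$ bounded away from $0$, both $\hp(t)-\hx(t)\to 0$ and $\hq(t)-\hx(t)\to 0$ almost surely. Hence every cluster point of $\{(\hp(t),\hq(t))\}$ has the form $(x_\infty,x_\infty)$ with $x_\infty$ a cluster point of $\hx$, and by Lemma \ref{lem:critical-ly} membership in $\crit\cL_\mu$ is equivalent to $x_\infty\in\crit H_\mu$.

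For item (i) I would argue that the boundedness of $[\hx;\dot{\hx}]$ together with the boundedness of the coefficients for large $t$ makes $\{(\hp(t),\hq(t)):t\ge 1\}$ bounded, so $\cC$ is nonempty; closedness follows verbatim from \cite[Lemma 3.3(vii)]{Radu2020} as in Theorem \ref{th:weak-convergence}(iv), whence $\cC$ is compact, and $\dist((\hp(t),\hq(t)),\cC)\to 0$ is immediate from the definition of the cluster set of a bounded trajectory. For item (ii), I would take $(x_\infty,x_\infty)\in\cC$ with $(\hp(t_k),\hq(t_k))\to(x_\infty,x_\infty)$ a.s., use the continuity of $\cL_\mu$ and the boundedness of the trajectory to invoke Lebesgue's dominated convergence theorem, obtaining $\Exp[\cL_\mu(\hp(t_k),\hq(t_k))]\to\Exp[\cL_\mu(x_\infty,x_\infty)]$; comparing with \eqref{eq:obj-conv-acce} gives $\Exp[\cL_\mu(x_\infty,x_\infty)]=\bar{\cL}_{\mu}$ for every cluster point, i.e.\ constant expectation over $\cC$.

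The substance is item (iii), namely $x_\infty\in\crit H_\mu$ a.s. From the almost sure bounds $\int_0^\infty\|\dot{\hx}\|^2\,dt<\infty$ and $\int_0^\infty\|\ddot{\hx}\|^2\,dt<\infty$ of Theorem \ref{th:weak-conver1-acce} (and their expectation versions \eqref{eq:dotx-l2}) together with the triangle inequality, the combination $\lambda\ddot{\hx}(t)+\lambda(\gamma+\alpha/t)\dot{\hx}(t)$ is square-integrable on $[1,\infty)$ almost surely. By the second-order equation \eqref{eq:inclusion-acce} this combination equals $-(\nabla H_\mu(\hx(t))+\rho^{-1/4}\dot{W}(t))$, so $\int_1^\infty\|\nabla H_\mu(\hx(t))+\rho^{-1/4}\dot{W}(t)\|^2\,dt<+\infty$ a.s. Applying the Aubin/Duchi--Ruan extraction used in Theorem \ref{th:weak-convergence}(iii) along the sequence realizing $x_\infty$, I would produce an increasing $t_k$ with, simultaneously, $\hx(t_k)\to x_\infty$, $\dot{\hx}(t_k)\to 0$ and $\nabla H_\mu(\hx(t_k))+\rho^{-1/4}\dot{W}(t_k)\to 0$ almost surely; feeding the latter two back into \eqref{eq:inclusion-acce} also forces $\ddot{\hx}(t_k)\to 0$. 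Using the continuity of $\nabla H_\mu$ I would then conclude $\nabla H_\mu(x_\infty)=0$ in the spirit of \eqref{eq:sto-gradient-conv}, so that $(x_\infty,x_\infty)\in\crit\cL_\mu$ by Lemma \ref{lem:critical-ly}.

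I expect this last step to be the main obstacle. In the first-order setting the noise-containing combination $\lambda\dot{\hx}+\rho^{-1/2}\dot{W}$ coincides \emph{exactly} with $-\nabla H_\mu(\hx)$, so its vanishing is literally the vanishing of the gradient; here, by contrast, the square-integrable combination equals $-(\lambda\ddot{\hx}+\lambda(\gamma+\alpha/t)\dot{\hx})$, and passing to the limit leaves the Brownian term $\rho^{-1/4}\dot{W}(t_k)$ balancing $\nabla H_\mu(\hx(t_k))$. The delicate point is therefore to keep the white-noise term inside the combination $\nabla H_\mu(\hx)+\rho^{-1/4}\dot{W}$ throughout, never evaluating $\dot{W}$ pointwise, and to argue within the weak-approximation conventions of Subsection \ref{subsec:lp-sadmm-system} that it is the gradient and not the noise that carries the limit; this is the genuine stochastic analogue of \eqref{eq:sto-gradient-conv} for the second-order system. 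A secondary technical nuisance is the blow-up of the $\hq$-coefficient $\sqrt{1+c\gamma+c\alpha/t}$ as $t\to 0$, which I would dispose of by restricting attention to $t\ge 1$, since only the large-time behaviour is relevant to $\cC$.
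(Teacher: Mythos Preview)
Your treatment of items (i) and (ii) is essentially the same as the paper's and is fine. The difficulty you correctly locate in item (iii) is real, and your proposed resolution does not close it.

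Your plan is to show
\[
\int_{1}^{\infty}\|\nabla H_{\mu}(\hx(t))+\rho^{-1/4}\dot{W}(t)\|^{2}\,dt<\infty\ \text{a.s.}
\]
by identifying the integrand with $\|\lambda\ddot{\hx}+\lambda(\gamma+\alpha/t)\dot{\hx}\|^{2}$, extract a subsequence along which this combination vanishes, and then ``argue within the weak-approximation conventions'' that the gradient part is what goes to zero. But you never produce a mechanism that isolates $\nabla H_{\mu}(\hx(t_k))$ from $\rho^{-1/4}\dot{W}(t_k)$; the combination going to zero only tells you $\nabla H_{\mu}(\hx(t_k))\to -\rho^{-1/4}\dot{W}(t_k)$ formally, and white noise has no pointwise limit. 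In the first-order case you invoke as a model, the noise-containing combination \emph{equals} $-\nabla H_\mu(\hx)$, so there is nothing to separate; here that identity is gone, and the ``weak-approximation conventions'' of Subsection \ref{subsec:lp-sadmm-system} offer no tool for the separation you need.

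The paper's missing idea is to bound $\|\nabla H_\mu(\hx)\|^2$ \emph{directly}, before the noise ever enters in a nonremovable way: write
\[
\|\nabla H_{\mu}(\hx(t))\|^{2}=\langle\nabla H_{\mu}(\hx(t)),\,-\lambda\ddot{\hx}(t)-\lambda(\gamma+\tfrac{\alpha}{t})\dot{\hx}(t)-\rho^{-1/4}\dot{W}(t)\rangle,
\]
use Young's inequality on the non-noise part, integrate over $[0,t]$ and take expectation. The cross term $\Exp\!\left[\int_0^t\langle\nabla H_\mu(\hx(s)),\dot{W}(s)\rangle\,ds\right]$ is an It\^o integral with zero mean, so it disappears, leaving
\[
\int_0^{\infty}\Exp\big[\|\nabla H_{\mu}(\hx(s))\|^{2}\big]\,ds
\;\lesssim\;
\int_0^{\infty}\Exp\big[\|\ddot{\hx}(s)\|^{2}\big]\,ds+\int_0^{\infty}\Exp\big[\|\dot{\hx}(s)\|^{2}\big]\,ds<\infty,
\]
hence $\int_0^\infty\|\nabla H_\mu(\hx)\|^2\,ds<\infty$ a.s.\ \emph{without} any noise term. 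Now the Aubin/Duchi--Ruan extraction applies to $\|\nabla H_\mu(\hx(\cdot))\|$ itself, giving $\nabla H_\mu(\hx(t_k))\to 0$ a.s.\ and thus $\nabla H_\mu(\bar p)=0$ by continuity. The key step you are missing is precisely this use of the It\^o integral's zero expectation to strip the noise out of the \emph{squared gradient} bound, rather than trying to strip it out of the pointwise limit.
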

\begin{proof}
	Since $\hx(t)$ and $\dot{\hx}(t)$ are bounded, by the definitions of $\hp(t)$ and $\hq(t)$ as presented in  Lemma \ref{lem:descent-acce}, $\{(\hp(t),\hq(t))\}$ is bounded, which further indicates that $\cC$ is nonempty and almost surely compact following the proof of Theorem \ref{th:weak-convergence}(iv). In addition, it is obvious that $\dist((\hp(t),\hq(t)),\cC)\to 0$. We obtain item (i).
	
	For any $(\bar{p},\bar{q})\in\cC$, we prove that $(\bar{p},\bar{q})\in\crit \cL_{\mu}$ almost surely, i.e., $\nabla \cL_{\mu}(\bar{p},\bar{q})=0$ almost surely.  By the definition of a cluster point, there exists an increasing  subsequence $\{t_k\}$  such that 
	\be\label{eq:conv-p-q}
 \begin{aligned}
	& \hp(t_k)=c\dot{\hx}(t_k)+\hx(t_k)\to \bar{p} \text{ a.s.}, \\
&  \hq(t_k)=\left(c+\sqrt{1+c\gamma+\frac{c\alpha}{t_k}}\right)\dot{\hx}(t_k)+\hx(t_k)\to \bar{q}\text{ a.s.},
\end{aligned}
	\ee 
	which, together with $\dot{\hx}(t)\to 0$ almost surely (refer to Theorem \ref{th:weak-conver1-acce}), yields that  
	\be\label{eq:cluster of x_tk}
	\hx(t_k)\to \bar{p}\text{ a.s.} \text{ and }\  \bar{p}=\bar{q}\text{ a.s.}
	\ee  
	By the continuity of $\nabla H_{\mu}$ and $\nabla\cL_{\mu}$ and using  \eqref{eq:conv-p-q} and \eqref{eq:cluster of x_tk}, we have 
	\bee
 \begin{aligned}
	& \nabla\cL_{\mu}(\hp(t_k),\hq(t_k)) \\
 &=(\nabla H_{\mu}(\hp(t_k))+\lambda(\hp(t_k)-\hq(t_k)), \lambda(\hq(t_k)-\hp(t_k)))\\
 &\to (\nabla H_{\mu}(\bar{p}),0) \text{ a.s.}
 \end{aligned}
	\eee
	and 
	\bee
	\nabla\cL_{\mu}(\hp(t_k),\hq(t_k))\to \nabla \cL_{\mu}(\bar{p},\bar{q}) \text{ a.s.}
	\eee
	Thus, $\nabla \cL_{\mu}(\bar{p},\bar{q})=(\nabla H_{\mu}(\bar{p}),0)$ almost surely. To show $\nabla \cL_{\mu}(\bar{p},\bar{q})=0$ almost surely, it suffices to demonstrate $\nabla H_{\mu}(\bar{p})=0$ almost surely.  By \eqref{eq:inclusion-acce}, we have 
	\be\label{eq:acce-gradient-H}
	\begin{aligned}
	& \|\nabla H_{\mu}(\hx(t))\|^2 
 \\
 &=\langle \nabla H_{\mu}(\hx(t)), -\lambda\ddot{\hx}(t)-\lambda(\gamma+\frac{\alpha}{t})\dot{\hx}(t)-\rho^{-1/4}\dot{W}(t)\rangle\\
	&\leq \langle \nabla H_{\mu}(\hx(t)), -\rho^{-1/4}\dot{W}(t)\rangle+\frac{1}{2}\|\nabla H_{\mu}(\hx(t))\|^2+\frac{1}{2}\|\lambda\ddot{\hx}(t)+\lambda(\gamma+\frac{\alpha}{t})\dot{\hx}(t)\|^2\\
	&\leq \langle \nabla H_{\mu}(\hx(t)), -\rho^{-1/4}\dot{W}(t)\rangle+\frac{1}{2}\|\nabla H_{\mu}(\hx(t))\|^2+\|\lambda\ddot{\hx}(t)\|^2+\|\lambda(\gamma+\frac{\alpha}{t})\dot{\hx}(t)\|^2.
	\end{aligned}
	\ee
	Integrating the above inequality over $[0,t]$ for any $t\ge 0$ implies
\bee
\begin{aligned}
& \Exp\left[	\int_{0}^{t}{	\|\nabla H_{\mu}(\hx(s))\|^2}ds\right] \\ & \leq 2\Exp\left[	\int_{0}^{t}{ \langle \nabla H_{\mu}(\hx(s)), -\rho^{-1/4}\dot{W}(s)\rangle}ds\right]+2\Exp\left[\int_{0}^{t}{ \|\lambda\ddot{\hx}(s)\|^2+\|\lambda(\gamma+\frac{\alpha}{s})\dot{\hx}(s)\|^2}ds\right]\\
&=2\Exp\left[\int_{0}^{t}{ \|\lambda\ddot{\hx}(s)\|^2}ds+\int_{0}^{t}{\|\lambda(\gamma+\frac{\alpha}{s})\dot{\hx}(s)\|^2 }ds\right],
\end{aligned}
\eee
where the above equality is due to the property of the It{\^o}'s integral.  Following the fact that	the order of expectation and integration of the above inequality is interchangeable and applying \eqref{eq:dotx-l2}, we obtain 
  \[	\int_{0}^{+\infty}{	\Exp\left[\|\nabla H_{\mu}(\hx(s))\|^2\right]}ds <+\infty,\]
which further implies that 
  \[\int_{0}^{+\infty}{	\|\nabla H_{\mu}(\hx(s))\|^2}ds<+\infty \text{ a.s. }\]
From the proof of \cite[corollary 3]{Duchi2018},  the above inequality suggests that 
	\be\label{eq:nablaH to0}
	\nabla H_{\mu}(\hx(t_k))\to 0 \text{ a.s.}
	\ee
According to \eqref{eq:cluster of x_tk} and the continuity of $\nabla H_{\mu}$, it holds that $\nabla H_{\mu}(\hx(t_k))\to\nabla H_{\mu}(\bar{p})$ almost surely, which, together with \eqref{eq:nablaH to0}, leads to $\nabla H_{\mu}(\bar{p})=0$ almost surely. 
Thus, $\nabla \cL_{\mu}(\bar{p},\bar{q})=0$ almost surely.  Item (iii) is derived. 

Due to the continuity of $\cL_{\mu}$ and \eqref{eq:conv-p-q}, \eqref{eq:cluster of x_tk}, the following relation holds: 
\bee
	\cL_{\mu}(\hp(t_k),\hq(t_k))\to \cL_{\mu}(\bar{p},\bar{q})=H_\mu(\bar{p}) \text{ a.s.}, 
	\eee
  which from Lebesgue's dominated convergence theorem further indicates,
  \bee
  \Exp[\cL_{\mu}(\hp(t_k),\hq(t_k))]\to  \Exp[\cL_{\mu}(\bar{p},\bar{q})]=\Exp[H_\mu(\bar{p})].
  \eee
  Then it together with \eqref{eq:obj-conv-acce} implies $\Exp[H_\mu(\bar{p})]=\bar{\cL}_{\mu}$ for any $(\bar{p},\bar{q})\in\cC$.  Hence, item (ii) holds. 
\end{proof}
A global  convergence and convergence rates can be derived in the rest of this section, under the assumption that $\mathcal L_\mu$ is a KL function.
\begin{theorem}\label{th:global-conv-acce}
	Under Assumptions  \ref{ass:acc-setting} and \ref{ass:first-sto}, suppose that $\cL_{\mu}$ is a KL function with \L{}ojasiewicz exponent $\theta$  and $[\hx(t);\dot{\hx}(t)]$ is  a bounded trajectory generated by \eqref{eq:inclusion-acce}. 
	Then, the following statements hold true:  
	\begin{itemize}
	\item[(i)] there exists a time $T_0> 0$ such that $\int_{T_0}^{\infty}{\|\dot{\hx}(t)\|}dt<+\infty$ a.s.  and 
	$\int_{T_0}^{\infty}{\|\ddot{\hx}(t)\|}dt<+\infty$ a.s.;
	\item[(ii)] $\hx(t)$ converges almost surely to a critical point  of $H_{\mu}$;
	\item[(iii)]   $\bar{\hx}(t)=\hx(t)-A^T(AA^T)^{-1}(A\hx(t)-\prox_{\mu h}(A\hx(t)))$   converges almost surely to  an $\epsilon$-approximate critical point of $H$, if $\mu \le {\epsilon\sqrt{\lambda_{min}(AA^T)}}/{(L_fL_h)}$.
\end{itemize}	
\end{theorem}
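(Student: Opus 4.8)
The plan is to mirror, at the level of the Lyapunov function $\cL_{\mu}$ and of expectations, the finite-length KL argument already carried out for the first-order system in Theorem~\ref{th:sto-global}, and then to transport the resulting convergence from $\cL_{\mu}$ to $H_{\mu}$ and finally to $H$. First I would record the expectation-form uniformized KL inequality: since $\cL_{\mu}$ is a KL function with \L{}ojasiewicz exponent $\theta$, \cite[Lemma 4.5]{DTLDS2021} furnishes a time $T_0>0$, a constant $\sigma_0>0$, the concave function $\varphi_0(s)=\sigma_0 s^{1-\theta}$, and a nondecreasing sequence $\bar{\cL}_{\mu,t}\nearrow\bar{\cL}_{\mu}$ (with $\bar{\cL}_{\mu}$ the limit from \eqref{eq:obj-conv-acce}) such that $\varphi_0'(\Exp[\cL_{\mu}(\hp(t),\hq(t))]-\bar{\cL}_{\mu,t})\,\Exp[\|\nabla\cL_{\mu}(\hp(t),\hq(t))\|]\ge 1$ for all $t\ge T_0$. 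The negativity will come from the descent estimate $\tfrac{d}{dt}\Exp[\cL_{\mu}(\hp,\hq)]\le -a\Exp[\|\dot{\hx}\|^2]-b\Exp[\|\ddot{\hx}\|^2]$ of Lemma~\ref{lem:descent-acce}, which (as $\bar{\cL}_{\mu,t}$ is nondecreasing) also bounds $\tfrac{d}{dt}(\Exp[\cL_{\mu}]-\bar{\cL}_{\mu,t})$.

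The technical core is to bound the KL denominator $\Exp[\|\nabla\cL_{\mu}(\hp,\hq)\|]$ by the descent quantities. Using $\nabla\cL_{\mu}(\hp,\hq)=(\nabla H_{\mu}(\hp)+\lambda(\hp-\hq),\,\lambda(\hq-\hp))$, the identity $\hp-\hq=-\sqrt{1+c\gamma+c\alpha/t}\,\dot{\hx}$ (whose coefficient is bounded for $t\ge T_0$), and the Lipschitz continuity of $\nabla H_{\mu}$ so that $\|\nabla H_{\mu}(\hp)-\nabla H_{\mu}(\hx)\|\le Lc\|\dot{\hx}\|$, the problem reduces to controlling $\Exp[\|\nabla H_{\mu}(\hx)\|^2]$. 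Here I would reuse exactly the estimate behind \eqref{eq:acce-gradient-H}: substituting $\nabla H_{\mu}(\hx)=-\lambda\ddot{\hx}-\lambda(\gamma+\tfrac{\alpha}{t})\dot{\hx}-\rho^{-1/4}\dot{W}$ from \eqref{eq:inclusion-acce}, applying Young's inequality, and using that the It\^o term has zero expectation, to obtain the pointwise bound $\Exp[\|\nabla H_{\mu}(\hx(t))\|^2]\le 2\lambda^2\Exp[\|\ddot{\hx}(t)\|^2]+2\lambda^2(\gamma+\tfrac{\alpha}{t})^2\Exp[\|\dot{\hx}(t)\|^2]$. Combining these gives $\Exp[\|\nabla\cL_{\mu}(\hp,\hq)\|]\le C\big(\sqrt{\Exp[\|\dot{\hx}\|^2]}+\sqrt{\Exp[\|\ddot{\hx}\|^2]}\big)$ for $t\ge T_0$. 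Feeding this together with the descent estimate into $\tfrac{d}{dt}\varphi_0=\varphi_0'\cdot\tfrac{d}{dt}(\Exp[\cL_{\mu}]-\bar{\cL}_{\mu,t})$, and using $au^2+bv^2\ge\tfrac{1}{2}\min\{a,b\}(u+v)^2$ with $u=\sqrt{\Exp[\|\dot{\hx}\|^2]},\,v=\sqrt{\Exp[\|\ddot{\hx}\|^2]}$, I expect
\[
\frac{d}{dt}\varphi_0(\Exp[\cL_{\mu}(\hp(t),\hq(t))]-\bar{\cL}_{\mu,t})\le -C'\big(\sqrt{\Exp[\|\dot{\hx}(t)\|^2]}+\sqrt{\Exp[\|\ddot{\hx}(t)\|^2]}\big),\quad t\ge T_0.
\]
Integrating over $[T_0,\infty)$ and using $\Exp[\cL_{\mu}]\to\bar{\cL}_{\mu}$ with the continuity of $\varphi_0$ yields $\int_{T_0}^{\infty}\sqrt{\Exp[\|\dot{\hx}\|^2]}\,dt<\infty$ and the analogous bound for $\ddot{\hx}$; by Jensen and Tonelli these give $\int_{T_0}^{\infty}\|\dot{\hx}\|\,dt<\infty$ and $\int_{T_0}^{\infty}\|\ddot{\hx}\|\,dt<\infty$ almost surely, which is item (i).

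Items (ii) and (iii) then follow as in Theorem~\ref{th:sto-global}. Almost-sure finiteness of $\int_{T_0}^{\infty}\|\dot{\hx}\|\,dt$ makes $\hx(t)$ almost surely Cauchy, hence convergent to a random limit $x_{\infty}$; since $\dot{\hx}(t)\to 0$ almost surely by Theorem~\ref{th:weak-conver1-acce}, both $\hp(t)$ and $\hq(t)$ converge almost surely to $x_{\infty}$, so $(x_{\infty},x_{\infty})$ is an almost-sure cluster point of $\{(\hp,\hq)\}$ and thus lies in $\crit\cL_{\mu}$ by Theorem~\ref{th:weak-conv2-acce}(iii); Lemma~\ref{lem:critical-ly} then gives $x_{\infty}\in\crit H_{\mu}$ almost surely, which is item (ii). For item (iii), continuity of $\prox_{\mu h}$ shows that $\bar{\hx}(t)$ of \eqref{bar-x} converges almost surely to $\bar{x}_{\infty}=x_{\infty}-A^T(AA^T)^{-1}(Ax_{\infty}-\prox_{\mu h}(Ax_{\infty}))$, and Lemma~\ref{lem:critical} bounds $\dist(0,\partial H(\bar{x}_{\infty}))\le L_fL_h\mu/\sqrt{\ewmin(AA^T)}$, which is at most $\epsilon$ precisely when $\mu\le\epsilon\sqrt{\ewmin(AA^T)}/(L_fL_h)$; hence $\bar{x}_{\infty}\in\crit_{\epsilon}H$ almost surely.

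The hard part will be the denominator bound in the second paragraph: once $\nabla H_{\mu}(\hx)$ is replaced through \eqref{eq:inclusion-acce}, the Brownian term $\rho^{-1/4}\dot{W}$ enters, and unlike the deterministic second-order analysis of \cite{Radu2020} it has no pathwise meaning. The estimate is only legitimate in expectation, where the It\^o cross term $\Exp[\langle\nabla H_{\mu}(\hx),\dot{W}\rangle]$ vanishes and $\Exp[\|\nabla H_{\mu}(\hx)\|^2]$ remains a genuine finite quantity controlled by $\Exp[\|\ddot{\hx}\|^2]$ and $\Exp[\|\dot{\hx}\|^2]$ — the same subtlety that is handled in passing from the pathwise chain rule to the expectation identities in Theorem~\ref{th:weak-conv2-acce}. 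Securing this pointwise-in-$t$ mean-square bound, rather than only its time-integrated form, is the delicate step; everything else is a faithful stochastic transcription of the deterministic KL finite-length argument.
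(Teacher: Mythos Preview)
Your proposal is correct and follows essentially the same route as the paper: the expectation-form KL inequality from \cite[Lemma 4.5]{DTLDS2021}, the descent estimate of Lemma~\ref{lem:descent-acce}, the bound on $\Exp[\|\nabla\cL_{\mu}(\hp,\hq)\|]$ via Lipschitz continuity of $\nabla H_{\mu}$ together with the pointwise mean-square estimate $\Exp[\|\nabla H_{\mu}(\hx)\|^2]\le 2\lambda^2\Exp[\|\ddot{\hx}\|^2]+2\lambda^2(\gamma+\alpha/t)^2\Exp[\|\dot{\hx}\|^2]$ obtained from \eqref{eq:inclusion-acce} after killing the It\^o cross term in expectation, and then integration to finite length followed by the passage to (ii) and (iii) via Theorems~\ref{th:weak-conver1-acce}, \ref{th:weak-conv2-acce} and Lemmas~\ref{lem:critical-ly}, \ref{lem:critical}. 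The only cosmetic difference is the algebraic step for the ratio: you use $au^2+bv^2\ge\tfrac{1}{2}\min\{a,b\}(u+v)^2$ combined with $\Exp[\|\nabla\cL_{\mu}\|]\le C(u+v)$, whereas the paper expands the product $(\sqrt{e(t)}\,u+\sqrt{8}\lambda\,v)(u+v)$ directly and extracts an explicit $\gamma_1$; both yield the same inequality \eqref{eq:conv-acce-global1}.
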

\begin{proof}	
Since $\cL_{\mu}$ is a KL function with \L{}ojasiewicz exponent $\theta$, by \cite[Lemma 4.5]{DTLDS2021}, there exist $T_0>0$ and a continuous concave function $\varphi_0(s)=\sigma_0 s^{1-\theta}$ with $\theta\in(0,1)$ such that 
\be\label{eq:kl-acce-sto-0}
\varphi_0'(\Exp[\cL_{\mu}(\hp(t),\hq(t))]-\bar{\cL}_{\mu,t})\cdot\Exp[\|\nabla \cL_{\mu}(\hp(t),\hq(t))\|]\geq 1, \ \forall t \geq T_0.
\ee
Here, $\bar{\cL}_{\mu,t}$ is a nondecreasing sequence and converges to $\bar{\cL}_{\mu}$ which is introduced in \eqref{eq:obj-conv-acce}. This 
implies  
\bee
\begin{aligned}
 \frac{d}{dt}(\Exp[\cL_{\mu}(\hp(t),\hq(t))]-\bar{\cL}_{\mu,t}) 
& = \frac{d}{dt}(\Exp[\cL_{\mu}(\hp(t),\hq(t))]-\bar{\cL}_{\mu})+\frac{d}{dt}(\bar{\cL}_{\mu}-\bar{\cL}_{\mu,t})\\
& \leq \frac{d}{dt}(\Exp[\cL_{\mu}(\hp(t),\hq(t))]-\bar{\cL}_{\mu}).
\end{aligned}
\eee
Substituting \eqref{eq:descent-acce1} into the above inequality, it holds that 
\be\label{eq:conv-acce-1}
\frac{d}{dt}(\Exp[\cL_{\mu}(\hp(t),\hq(t))]-\bar{\cL}_{\mu,t})\leq -a\Exp[\|\dot{\hx}(t)\|^2]-b\Exp[\|\ddot{\hx}(t)\|^2],
\ee
which, together with \eqref{eq:kl-acce-sto-0}, yields   
\be\label{eq:conv-acce-conbine}
\begin{aligned}
&  \frac{d}{dt}\varphi_0(\Exp[\cL_{\mu}(\hp(t),\hq(t))]-\bar{\cL}_{\mu,t}) \\
&=\varphi_0'(\Exp[\cL_{\mu}(\hp(t),\hq(t))]-\bar{\cL}_{\mu,t})\cdot \frac{d}{dt}(\Exp[\cL_{\mu}(\hp(t),\hq(t))]-\bar{\cL}_{\mu,t}) \\
&\leq \frac{-a\Exp[\|\dot{\hx}(t)\|^2]-b\Exp[\|\ddot{\hx}(t)\|^2]}{\Exp[\|\nabla \cL_{\mu}(\hp(t),\hq(t))\|]}.
\end{aligned}
\ee
Note that by the definition of $\cL_{\mu}$, it follows that 
\bee
\begin{aligned}
& \|\nabla \cL_{\mu}(\hp(t),\hq(t))\|^2 \\
&= \|\nabla H_{\mu}(\hp(t))+\lambda(\hp(t)-\hq(t))\|^2+\|\lambda(\hp(t)-\hq(t))\|^2\\
&\leq 2\|\nabla H_{\mu}(c\dot{\hx}(t)+\hx(t))\|^2+3\lambda^2(1+c\gamma+\frac{c\alpha}{t})\|\dot{\hx}(t)\|^2\\
&=2\|\nabla H_{\mu}(c\dot{\hx}(t)+\hx(t))-\nabla H_{\mu}(\hx(t))+\nabla H_{\mu}(\hx(t))\|^2+3\lambda^2(1+c\gamma+\frac{c\alpha}{t})\|\dot{\hx}(t)\|^2\\
&\leq 4\|\nabla H_{\mu}(c\dot{\hx}(t)+\hx(t))-\nabla H_{\mu}(\hx(t))\|^2+4\|\nabla H_{\mu}(\hx(t))\|^2+3\lambda^2(1+c\gamma+\frac{c\alpha}{t})\|\dot{\hx}(t)\|^2\\
&\leq \left(4c^2L^2+3\lambda^2(1+c\gamma+\frac{c\alpha}{t})\right)\|\dot{\hx}(t)\|^2+4\|\nabla H_{\mu}(\hx(t))\|^2,
\end{aligned}
\eee
where the first inequality is obtained from the definitions of $\hp(t)$ and $\hq(t)$ and inequality $\|x+y\|^2\leq 2\|x\|^2+2\|y\|^2$, the last inequality is from the $L$-Lipschitz continuity of $\nabla H_{\mu}$. 
By taking expectation on both sides of the above inequality, we have 
\be\label{eq:conv-acce2}
\Exp[\|\nabla \cL_{\mu}(\hp(t),\hq(t))\|^2]\leq \left(4c^2L^2+3\lambda^2(1+c\gamma+\frac{c\alpha}{t})\right)\Exp[\|\dot{\hx}(t)\|^2]+4\Exp[\|\nabla H_{\mu}(\hx(t))\|^2].
\ee 
Recalling the property of the It{\^o}'s integral, we have, for any $t>0$, 
\[\Exp\left[	\int_{0}^{t}{ \langle \nabla H_{\mu}(\hx(s)), -\rho^{-1/4}\dot{W}(s)\rangle}ds\right]=0,\] 
thus for almost every $t\in [0,+\infty)$, 
\[\Exp\left[ \langle \nabla H_{\mu}(\hx(t)), -\rho^{-1/4}\dot{W}(t)\rangle \right]=0, \] 
which, together with \eqref{eq:acce-gradient-H}, leads to
\be\label{eq:exp-gradient}
\Exp[\|\nabla H_{\mu}(\hx(t))\|^2]\leq 2\lambda^2\Exp[\|\ddot{\hx}(t)\|^2]+2\lambda^2(\gamma+\frac{\alpha}{t})^2\Exp[\|\dot{\hx}(t)\|^2], \text{ for a.e.  } t\in [0,+\infty). 
\ee
Substituting  the above inequality into \eqref{eq:conv-acce2}, we obtain
\bee
\Exp[\|\nabla \cL_{\mu}(\hp(t),\hq(t))\|^2]\leq e(t)\Exp[\|\dot{\hx}(t)\|^2]+8\lambda^2\Exp[\|\ddot{\hx}(t)\|^2], 
\eee
where $e(t):=4c^2L^2+3\lambda^2(1+c\gamma+\frac{c\alpha}{t})+8\lambda^2(\gamma+\frac{\alpha}{t})^2$. Then using Cauchy-Schwarz inequality  we attain
\be\label{eq:acce-gradient-L}
\Exp[\|\nabla \cL_{\mu}(\hp(t),\hq(t))\|]\leq \sqrt{e(t)}\sqrt{\Exp[\|\dot{\hx}(t)\|^2]}+\sqrt{8}\lambda\sqrt{\Exp[\|\ddot{\hx}(t)\|^2]}, 
\ee
which further indicates from  \eqref{eq:conv-acce-conbine} that
\be\label{eq:conv-acce-conbine1}
	\frac{d}{dt}\varphi_0(\Exp[\cL_{\mu}(\hp(t),\hq(t))]-\bar{\cL}_{\mu,t})\leq \frac{-a\Exp[\|\dot{\hx}(t)\|^2]-b\Exp[\|\ddot{\hx}(t)\|^2]}{\sqrt{e(t)}\sqrt{\Exp[\|\dot{\hx}(t)\|^2]}+\sqrt{8}\lambda\sqrt{\Exp[\|\ddot{\hx}(t)\|^2]}}.
\ee
Next, we show that there exists a  constant $\gamma_1>0$ such that 
\be\label{eq:conv-acce-precond}
\frac{-a\Exp[\|\dot{\hx}(t)\|^2]-b\Exp[\|\ddot{\hx}(t)\|^2]}{\sqrt{e(t)}\sqrt{\Exp[\|\dot{\hx}(t)\|^2]}+\sqrt{8}\lambda\sqrt{\Exp[\|\ddot{\hx}(t)\|^2]}}\leq -\gamma_1\left(\sqrt{\Exp[\|\dot{\hx}(t)\|^2]}+\sqrt{\Exp[\|\ddot{\hx}(t)\|^2]}\right).
\ee
As a matter of fact, it is easy to check:
\bee
\begin{aligned}
&\left(\sqrt{e(t)}\sqrt{\Exp[\|\dot{\hx}(t)\|^2]}+\sqrt{8}\lambda\sqrt{\Exp[\|\ddot{\hx}(t)\|^2]}\right)\cdot\left(\sqrt{\Exp[\|\dot{\hx}(t)\|^2]}+\sqrt{\Exp[\|\ddot{\hx}(t)\|^2]}\right)\\
&\leq \frac{3\sqrt{e(t)}+\sqrt{8}\lambda}{2}\Exp[\|\dot{\hx}(t)\|^2]+\frac{\sqrt{e(t)}+3\sqrt{8}\lambda}{2}\Exp[\|\ddot{\hx}(t)\|^2].
\end{aligned}
\eee
We can set 
\[\gamma_1=-\max\left\{\max_{t\geq T_0}\frac{-2a}{3\sqrt{e(t)}+\sqrt{8}\lambda},\max_{t\geq T_0}\frac{-2b}{\sqrt{e(t)}+3\sqrt{8}\lambda}\right\}\]
to ensure \eqref{eq:conv-acce-precond}. Thus, we derive the inequality
\be\label{eq:conv-acce-global1}
\frac{d}{dt}\varphi_0(\Exp[\cL_{\mu}(\hp(t),\hq(t))]-\bar{\cL}_{\mu,t})\leq -\gamma_1\left(\sqrt{\Exp[\|\dot{\hx}(t)\|^2]}+\sqrt{\Exp[\|\ddot{\hx}(t)\|^2]}\right).
\ee
Integrating \eqref{eq:conv-acce-global1} over $[T_0, t]$ with $t>T_0$  yields 
\[
\begin{aligned}
& \gamma_1\int_{T_0}^{t}{\sqrt{\Exp[\|\dot{\hx}(s)\|^2]}+\sqrt{\Exp[\|\ddot{\hx}(s)\|^2] }}ds \\
& \leq  \varphi_0(\Exp[\cL_{\mu}(p(T_0),q(T_0))]-\bar{\cL}_{\mu,T_0})-\varphi_0(\Exp[\cL_{\mu}(\hp(t),\hq(t))]-\bar{\cL}_{\mu,t}).
\end{aligned}
\]
By letting $t\to +\infty$ in the above inequality and by using  the  Cauchy-Schwarz inequality, we obtain 
\bee
\int_{T_0}^{\infty}{\Exp[\|\dot{\hx}(s)\|]}ds<+\infty \text{ and } \int_{T_0}^{\infty}{\Exp[\|\ddot{\hx}(s)\|]}ds
<+\infty,
\eee
which implies  
\bee
\int_{T_0}^{\infty}{\|\dot{\hx}(t)\|}dt<+\infty \text{ a.s.  and } 
	\int_{T_0}^{\infty}{\|\ddot{\hx}(t)\|}dt<+\infty \text{ a.s. }
\eee
Thus item (i) holds ture. Further, there exists an event  $\cA$ with $\Prob(\cA)=1$ such that for any $\omega\in\cA$, 
\bee
\int_{T_0}^{\infty}{\|\dot{\hx}(t,\omega)\|}dt<+\infty \text{ and } 
	\int_{T_0}^{\infty}{\|\ddot{\hx}(t,\omega)\|}dt<+\infty.
\eee
By Cauchy's criterion, we derive $\hx(t,\omega)\to x_{\infty}(\omega)$ for any $\omega\in\cA$. Thus $\hx(t)\to x_{\infty}$ almost surely. Since $\dot{\hx}(t)\to 0$ almost surely  (from Theorem \ref{th:weak-conver1-acce}),   by the definitions of $\hp(t)$ and $\hq(t)$, $(\hp(t),\hq(t))\to (x_{\infty},x_{\infty})$ almost surely, and $(x_{\infty},x_{\infty})\in\crit\cL_{\mu}$ almost surely from Theorem \ref{th:weak-conv2-acce}(iii). Following Lemma \ref{lem:critical-ly}, we further have $x_{\infty}\in\crit H_{\mu}$ almost surely.  Then it follows from the continuity of $\prox_{\mu h}$ that 
\be\label{eq:conv-barhx}  
\bar{\hx}(t)\to \bar{x}_\infty:=x_{\infty}-A^T(AA^T)^{-1}(Ax_{\infty}-\prox_{\mu h}(Ax_{\infty})) \text{ a.s.},
\ee
and from Lemma \ref{lem:critical} and the definition of $\epsilon$-approximate critical point, $\bar{x}_\infty\in\crit_\epsilon H$ almost surely  when $\mu \le {\epsilon\sqrt{\lambda_{min}(AA^T)}}/{(L_fL_h)}$.  Hence, items (ii) and (iii) hold true. 
\end{proof}

\begin{theorem}\label{th:conv-rate-acce}
	Under the conditions of Theorem \ref{th:global-conv-acce}, suppose that  $\cL_{\mu}$ is a KL function with \L{}ojasiewicz exponent $\theta$. Let  $x_{\infty}$ be the limit of $\hx(t)$ in the almost sure sense, 
	\[\bar{\hx}(t)=\hx(t)-A^T(AA^T)^{-1}(A\hx(t)-\prox_{\mu h}(A\hx(t)))\]
 and 
 \[
\bar{x}_{\infty}:=x_{\infty}-A^T(AA^T)^{-1}(Ax_{\infty}-\prox_{\mu h}(Ax_{\infty})).
 \]
	Then the following statements hold true:  
	\begin{itemize}
		\item[(i)] if $\theta\in(0,1/2]$, there exist  constants $a_1, b_1>0$ and time $T_1> 0$ such that for $t\geq T_1$,
		\[
		  \Exp[\|\bar{\hx}(t)-\bar{x}_{\infty}\|]\leq a_1\exp(-b_1(1-\theta)t);
		\]
		\item[(ii)] if $\theta\in(1/2,1)$, there exists a  constant $c_1>0$ and time $T_2> 0$ such that for $t\ge T_2$,
		\[
		\Exp[\|\bar{\hx}(t)-\bar{x}_{\infty}\|]\leq c_1t^{\frac{1-\theta}{1-2\theta}}.
		\]			
	\end{itemize}
\end{theorem}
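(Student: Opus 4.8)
The plan is to replay the rate arguments of Theorem~\ref{th:conv-rate} and of the first-order stochastic rate theorem following Theorem~\ref{th:sto-global}, but now with the Lyapunov function $\cL_{\mu}$ in place of $H_{\mu}$, reusing the estimates already assembled in the proof of Theorem~\ref{th:global-conv-acce}. Write $g(t):=\Exp[\cL_{\mu}(\hp(t),\hq(t))]-\bar{\cL}_{\mu,t}$. Since $\Exp[\cL_{\mu}(\hp(t),\hq(t))]$ decreases to $\bar{\cL}_{\mu}$ while $\bar{\cL}_{\mu,t}$ increases to $\bar{\cL}_{\mu}$, one has $g(t)\ge0$, and by \eqref{eq:obj-conv-acce} $g(t)\to0$.

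First I would derive a scalar differential inequality for $g$. Combining the descent estimate \eqref{eq:conv-acce-1} with the gradient bound \eqref{eq:conv-acce2}, and noting that the coefficient $e(t)$ is decreasing on $[T_0,\infty)$ and hence bounded by $e(T_0)$, gives $\Exp[\|\nabla\cL_{\mu}(\hp(t),\hq(t))\|^2]\le C_1(\Exp[\|\dot{\hx}(t)\|^2]+\Exp[\|\ddot{\hx}(t)\|^2])$ with $C_1:=\max\{e(T_0),8\lambda^2\}$. Together with the Cauchy-Schwarz inequality $(\Exp[\|\nabla\cL_{\mu}\|])^2\le\Exp[\|\nabla\cL_{\mu}\|^2]$ and the KL inequality \eqref{eq:kl-acce-sto-0} rewritten as $\Exp[\|\nabla\cL_{\mu}\|]\ge g(t)^{\theta}/(\sigma_0(1-\theta))$, this produces $\frac{d}{dt}g(t)\le -K\,g(t)^{2\theta}$ for $t\ge T_0$, where $K:=\min\{a,b\}/(C_1\sigma_0^2(1-\theta)^2)$. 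This is exactly the inequality \eqref{eq:kl-H} underlying Theorem~\ref{th:conv-rate}, so the same ODE comparison applies: for $\theta\in(0,1/2]$ one reduces (once $g<1$) to $\theta=1/2$ and integrates $\frac{d}{dt}\log g\le -K$ for exponential decay $g(t)\le b_0\exp(-b_1 t)$; for $\theta\in(1/2,1)$ one integrates the inequality for $g^{1-2\theta}$ for polynomial decay $g(t)\le b_2 t^{1/(1-2\theta)}$, recovering \eqref{eq:sto-conv-1}--\eqref{eq:sto-conv-2} with $\cL_{\mu}$ in place of $H_{\mu}$.

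Next I would convert these rates on $g$ into a rate on $\Exp[\|\hx(t)-x_{\infty}\|]$ through a finite-length estimate. Starting from \eqref{eq:conv-acce-global1}, discarding the $\|\ddot{\hx}\|$ term, integrating over $[t_1,t_2]$, and bounding $\Exp[\|\hx(t_2)-\hx(t_1)\|]\le\int_{t_1}^{t_2}\Exp[\|\dot{\hx}(s)\|]\,ds\le\int_{t_1}^{t_2}\sqrt{\Exp[\|\dot{\hx}(s)\|^2]}\,ds$ via the Cauchy-Schwarz inequality, then letting $t_2\to\infty$ (so that $\hx(t_2)\to x_{\infty}$ a.s.\ by Theorem~\ref{th:global-conv-acce}(ii), in expectation by dominated convergence, and $\varphi_0(g(t_2))\to0$), yields $\Exp[\|\hx(t_1)-x_{\infty}\|]\le\gamma_1^{-1}\varphi_0(g(t_1))=(\sigma_0/\gamma_1)\,g(t_1)^{1-\theta}$. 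Inserting the two decay rates for $g$ gives $\Exp[\|\hx(t)-x_{\infty}\|]$ bounds of order $\exp(-b_1(1-\theta)t)$ and $t^{(1-\theta)/(1-2\theta)}$. Finally, the passage to $\bar{\hx}$ is identical to \eqref{eq:relation-x-barx}: using $\|A^T(AA^T)^{-1}\|\,\|A\|=\cond(A)$ and the $\tfrac{1}{1-\mu\varrho}$-Lipschitz continuity of $\prox_{\mu h}$, one has $\Exp[\|\bar{\hx}(t)-\bar{x}_{\infty}\|]\le(1+\tfrac{2-\mu\varrho}{1-\mu\varrho}\cond(A))\,\Exp[\|\hx(t)-x_{\infty}\|]$, and absorbing the constant delivers the stated rates.

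I expect the main obstacle to be the first step. Unlike the first-order case, $\nabla\cL_{\mu}$ couples $\dot{\hx}$ and $\ddot{\hx}$ through the time-dependent damping $\gamma+\alpha/t$, so recovering a single-variable inequality $\dot g\le -K g^{2\theta}$ requires dominating $\Exp[\|\nabla\cL_{\mu}\|^2]$ uniformly on $[T_0,\infty)$ by $\Exp[\|\dot{\hx}\|^2]+\Exp[\|\ddot{\hx}\|^2]$; this in turn hinges on the boundedness of $e(t)$, which holds precisely because $T_0>0$. Once this uniform bound is secured, the remainder is a routine replay of Theorems~\ref{th:conv-rate} and~\ref{th:global-conv-acce}.
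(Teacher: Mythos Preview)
Your proposal is correct and follows essentially the same route as the paper. Both arguments reduce to the scalar inequality $\frac{d}{dt}g(t)\le -K\,g(t)^{2\theta}$ for $g(t)=\Exp[\cL_\mu(\hp(t),\hq(t))]-\bar\cL_{\mu,t}$, then integrate as in Theorem~\ref{th:conv-rate} and pass to $\bar\hx$ via \eqref{eq:relation-x-barx}. The only cosmetic difference is that the paper reaches this inequality by combining \eqref{eq:conv-acce-global1} (i.e.\ the $\gamma_1$-bound on $\tfrac{d}{dt}\varphi_0(g)$) with \eqref{eq:acce-gradient-L} and the KL bound \eqref{eq:kl-acce-sto-0}, whereas you combine the more primitive descent estimate \eqref{eq:conv-acce-1} directly with the squared gradient bound $\Exp[\|\nabla\cL_\mu\|^2]\le e(t)\Exp[\|\dot\hx\|^2]+8\lambda^2\Exp[\|\ddot\hx\|^2]$ (note: this is the display \emph{after} \eqref{eq:exp-gradient}, not \eqref{eq:conv-acce2} itself, which still contains $\Exp[\|\nabla H_\mu\|^2]$) and the KL inequality; your path is slightly more direct since it avoids the auxiliary constants $\gamma_1,\gamma_2$, but the content is identical.
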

\begin{proof}
Since $\varphi_0(s)=\sigma_0 s^{1-\theta}$,  we obtain from \eqref{eq:conv-acce-global1} that 
\be\label{eq:acce-rate-1}
\frac{d}{dt}(\Exp[\cL_{\mu}(\hp(t),\hq(t))]-\bar{\cL}_{\mu,t})^{1-\theta}\leq -\frac{\gamma_1}{\sigma_0}\left(\sqrt{\Exp[\|\dot{\hx}(t)\|^2]}+\sqrt{\Exp[\|\ddot{\hx}(t)\|^2]}\right).
\ee	
From \eqref{eq:kl-acce-sto-0} and \eqref{eq:acce-gradient-L}, it follows that
\be\label{eq:acce-rate-2}
\begin{aligned}
\frac{1}{\sigma_0(1-\theta)}(\Exp[\cL_{\mu}(\hp(t),\hq(t))]-\bar{\cL}_{\mu,t})^\theta &\leq  \Exp[\|\nabla \cL_{\mu}(\hp(t),\hq(t))\|] \\
&\leq  \gamma_2\left(\sqrt{\Exp[\|\dot{\hx}(t)\|^2]}+\sqrt{\Exp[\|\ddot{\hx}(t)\|^2]}\right),
\end{aligned}
\ee
where $\gamma_2:=\max\{\max_{t\geq T_0}\sqrt{e(t)},\sqrt{8}\lambda\}$. Combining \eqref{eq:acce-rate-1} and \eqref{eq:acce-rate-2}, we have  
\bee
\frac{d}{dt}(\Exp[\cL_{\mu}(\hp(t),\hq(t))]-\bar{\cL}_{\mu,t})^{1-\theta}\leq\frac{-\gamma_1}{\sigma_0^2\gamma_2(1-\theta)}(\Exp[\cL_{\mu}(\hp(t),\hq(t))]-\bar{\cL}_{\mu,t})^\theta.
\eee
Rearranging the above inequality yields
\bee
\frac{d}{dt}(\Exp[\cL_{\mu}(\hp(t),\hq(t))]-\bar{\cL}_{\mu,t})\leq \frac{-\gamma_1}{\sigma_0^2\gamma_2(1-\theta)^2}(\Exp[\cL_{\mu}(\hp(t),\hq(t))]-\bar{\cL}_{\mu,t})^{2\theta},
\eee
which is similar to \eqref{eq:sto-rate1}. 
Then by applying a similar approach, we derive the convergence rates of $\bar{\hx}$ in expectation.
\end{proof}

\section{Conclusion and discussion}
In this paper, we present an analysis of the convergence properties of linearized proximal ADMM methods for fully nonconvex composite optimization, from a dynamic perspective. Firstly, we establish a fundamental connection between the LP-ADMM algorithm and a first-order  differential inclusion. This connection enables us to demonstrate that  the subsequence of the generated trajectory converges to a critical point of  objective function. We then show the global convergence and convergence rates within the context of KL property. To further analyze the convergence of a stochastic variant of LP-ADMM (LP-SADMM), we employ a similar approach to obtain a first-order stochastic differential equation by utilizing  Brownian motion. This allows us to investigate the convergence properties of the algorithm from the viewpoint of  stochastic dynamical system. Additionally, we propose an accelerated LP-SADMM, which combines  Nesterov's  accelerated gradient method with LP-SADMM. We derive the continuous counterpart of this algorithm, resulting in a second-order  differential equation. By leveraging the KL property, the almost sure convergence of the generated trajectory is established. We utilize the almost sure convergence of this trajectory to construct a stochastic process that converges almost surely to an approximate critical point of objective function.  We also derive the  expected convergence rates associated with this stochastic process.

Our analysis of LP-SADMM in Section \ref{sec:sto-ADMM} reveals  that the convergence results remain valid even when the function $h$ is a general nonconvex function, eliminating the need of the Moreau envelope. However, in the case of the accelerated LP-SADMM studied in Section \ref{sec:acce-ADMM}, its convergence depends  upon the smoothness of the objective function. Consequently, we introduced the utilization of the Moreau envelope to derive a smooth approximation $h_{\mu}$ for weakly convex functions $h$. Unfortunately, directly extending the current analysis presented in Section \ref{sec:acce-ADMM} to encompass general nonconvex functions appears to be unattainable currently. It deserves further study in the future work. 


\bmhead{Acknowledgments}

This work was partially supported by the National Key R\&D Program of China (No. 2022YFA1004000), the Major Key Project of PCL (No. PCL2022A05)  and the National Natural Science Foundation of China (Nos. 11271278 and 12271076).

\section*{Declarations}
The authors have not disclosed any competing interests.



\bibliography{sn-bibliography}

\end{document}